\title{On saturation games}
\author{Dan Hefetz \thanks{School of Mathematics, University of Birmingham, Edgbaston,
Birmingham B15 2TT, United Kingdom. Email: d.hefetz@bham.ac.uk.
Research supported by EPSRC grant EP/K033379/1.} \and Michael
Krivelevich \thanks{School of Mathematical Sciences, Raymond and
Beverly Sackler Faculty of Exact Sciences, Tel Aviv University,
69978, Israel. Email: krivelev@post.tau.ac.il. Research supported in
part by USA-Israel BSF Grant 2010115 and by grant 912/12 from the
Israel Science Foundation.} \and Alon Naor
\thanks{School of Mathematical Sciences, Raymond and Beverly Sackler
Faculty of Exact Sciences, Tel Aviv University, Tel Aviv, 69978,
Israel. Email: alonnaor@post.tau.ac.il} \and Milo\v{s}
Stojakovi\'c\thanks{Department of Mathematics and Informatics,
University of Novi Sad, Serbia. Partly supported by Ministry of
Education and Science, Republic of Serbia, and Provincial
Secretariat for Science, Province of Vojvodina. Email:
milos.stojakovic@dmi.uns.ac.rs.}}
\newif\ifnotesw\noteswtrue
\newtheorem{theorem}{Theorem}[section]
\newtheorem{lemma}[theorem]{Lemma}
\newtheorem{claim}[theorem]{Claim}
\newtheorem{observation}[theorem]{Observation}
\newtheorem{corollary}[theorem]{Corollary}
\newtheorem{conjecture}[theorem]{Conjecture}
\newtheorem{remark}[theorem]{Remark}
\newenvironment{proof}{\noindent{\bf Proof\,}}{\hfill$\Box$}
\begin{document}
\maketitle

\begin{abstract}
A graph $G = (V,E)$ is said to be \emph{saturated} with respect to a
monotone increasing graph property ${\mathcal P}$, if $G \notin
{\mathcal P}$ but $G \cup \{e\} \in {\mathcal P}$ for every $e \in
\binom{V}{2} \setminus E$. The \emph{saturation game} $(n, {\mathcal
P})$ is played as follows. Two players, called Mini and Max,
progressively build a graph $G \subseteq K_n$, which does not
satisfy ${\mathcal P}$. Starting with the empty graph on $n$
vertices, the two players take turns adding edges $e \in
\binom{V(K_n)}{2} \setminus E(G)$, for which $G \cup \{e\} \notin
{\mathcal P}$, until no such edge exists (i.e.\ until $G$ becomes
${\mathcal P}$-saturated), at which point the game is over. Max's
goal is to maximize the length of the game, whereas Mini aims to
minimize it. The \emph{score} of the game, denoted by $s(n,
{\mathcal P})$, is the number of edges in $G$ at the end of the
game, assuming both players follow their optimal strategies.

We
prove lower and upper bounds on the score of games in which the
property the players need to avoid is being $k$-connected, having
chromatic number at least $k$, and admitting a matching of a given
size. In doing so we demonstrate that the score of certain games can
be as large as the Tur\'an number or as low as the saturation number
of the respective graph property, and also that the score
might strongly depend on the identity of the first player to move.
\end{abstract}

\section{Introduction}
\label{sec::intro}

Let $n$ be a positive integer, let ${\mathcal P}$ be a monotone
increasing property of graphs on $n$ vertices and let $G = ([n],E)$
be a graph which does not satisfy ${\mathcal P}$. An edge $e \in
\binom{[n]}{2} \setminus E$ is called \emph{legal with respect to
$G$ and ${\mathcal P}$} if $G \cup \{e\} \notin {\mathcal P}$. A
graph $G = ([n],E)$ is said to be \emph{saturated with respect to
${\mathcal P}$} if $G \notin {\mathcal P}$ and there are no legal
edges with respect to $G$ and ${\mathcal P}$. Given a graph $H
\notin {\mathcal P}$ with vertex set $[n]$, the \emph{saturation
game} $(H, {\mathcal P})$ is played as follows. Two players, called
Mini and Max, progressively build a graph $G$, where $H \subseteq G
\subseteq K_n$, so that $G$ does not satisfy ${\mathcal P}$.
Starting with $G = H$, the two players take turns adding edges which
are legal with respect to the current graph $G$ and the property
${\mathcal P}$ until no such edge exists, at which point the game is
over. Max's goal is to maximize the length of the game, whereas Mini
aims to minimize it. The \emph{score} of the game, denoted by $s(H,
{\mathcal P})$, is the number of edges in $G$ at the end of the game
(recall that with some abuse of notation we use $G$ to denote the
graph built by both players at any point during the game) when both
players follow their optimal strategies. In fact, we would only be
interested in the case $H = \overline{K}_n$, where $\overline{K}_n$ is the
empty graph on $n$ vertices, but we generalize the definition of the
game for the purpose of simplifying the presentation of some of our
proofs. We abbreviate $s(\overline{K}_n, {\mathcal P})$ to $s(n,
{\mathcal P})$. Note that we did not specify which of the two
players starts the game. Since the score of a saturation game might
depend on this information, whenever studying a specific game we
will consider its score in two cases -- when Mini is the first player and when Max
is the first player. Where we do not explicitly specify the identity of
the first player, our related results hold in both cases.

Straightforward bounds on the score of a saturation game stem from the
corresponding \emph{saturation number} and \emph{Tur\'an number}.
Given a monotone increasing graph property ${\mathcal P}$, the
saturation number of ${\mathcal P}$, denoted by $sat(n, {\mathcal
P})$, is the minimum possible size of a saturated graph on $n$
vertices with respect to ${\mathcal P}$. Saturation numbers have
attracted a lot of attention since their introduction by Erd\H{o}s,
Hajnal and Moon~\cite{EHM}; many related results and open problems
can be found in the survey~\cite{FFS}. Similarly, the Tur\'an number
of ${\mathcal P}$, denoted by $ex(n, {\mathcal P})$, is the maximum
possible size of a saturated graph on $n$ vertices with respect to
${\mathcal P}$. The theory of Tur\'an numbers is a cornerstone of
Extremal Combinatorics; many related results and open problems can
be found e.g.\ in~\cite{BolBook}. It is immediate from the definition
of the saturation game $(n, {\mathcal P})$ that $sat(n, {\mathcal
P}) \leq s(n, {\mathcal P}) \leq ex(n, {\mathcal P})$.

Results on scores of saturation games are quite scarce. For example,
let $\mathcal{K}_3$ denote the property of containing a triangle. A
well-known theorem of Mantel (see e.g.~\cite{West}) asserts that
$ex(n, \mathcal {K}_3) = \lfloor n^2/4 \rfloor$. Moreover, since a
star is saturated with respect to $\mathcal {K}_3$ and, on the other
hand, no disconnected graph is, it follows that $sat(n, \mathcal
{K}_3) = n-1$ (this also follows from a more general result of
Erd\H{o}s, Hajnal and Moon~\cite{EHM}). In contrast to these exact
results, very little is known about $s(n, \mathcal {K}_3)$. The best
known lower bound, due to F\"uredi, Reimer and Seress~\cite{FRS}, is
of order $n \log n$. In the same paper, F\"uredi et al.\ attribute
an upper bound of $n^2/5$ to Erd\H{o}s; however, the proof is lost.
Bir\'{o}, Horn and Wildstrom~\cite{BHW} have recently improved the
upper bound of $\lfloor n^2/4 \rfloor$ which follows from Mantel's
Theorem to $\frac{26}{121}n^2+o\left(n^2\right)$. Additional
saturation-type games were recently studied in~\cite{CKOW}
and~\cite{PV}.

We begin our study of saturation games with games in which both
players are required to keep the connectivity of the graph below a
certain threshold. For every positive integer $k$ we would like to
determine $s(n, {\mathcal C}_k)$, where ${\mathcal C}_k$ is the
property of being $k$-vertex-connected and spanning. It is easy to
see that $ex(n, {\mathcal C}_k) = \binom{n-1}{2} + k - 1$ holds for
every positive integer $k \le n$. Very recently, it was shown
in~\cite{CKRW} that $s(n, {\mathcal C}) = \binom{n-2}{2} + 1$ for
every $n \geq 6$. Our first result shows that $s(n, {\mathcal C}_k)$
is almost as large as $ex(n, {\mathcal C}_k)$ for every fixed
positive integer $k$.

\begin{theorem} \label{th::allKconnectivity}
$s(n, {\mathcal C}_k) \geq \binom{n}{2} - 5k n^{3/2}$ for every
positive integer $k$ and sufficiently large $n$.
\end{theorem}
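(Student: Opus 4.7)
The Tur\'an-type extremum for non-$k$-connected graphs is $ex(n,\mathcal{C}_k)=\binom{n-1}{2}+k-1$, realised by $K_{n-1}$ together with a peripheral vertex of degree $k-1$. More generally, every $\mathcal{C}_k$-saturated graph on $n$ vertices has the following structure: there is a clique $S$ with $|S|\le k-1$ that is universally adjacent to the rest, and $V\setminus S$ splits into cliques $C_1,\dots,C_r$, so the number of missing edges of the final graph equals $\sum_{i<j}|C_i||C_j|$. To obtain the stated bound, Max only needs to steer the game to a saturated state in which one component, say $C_1$, has size at least $n-5k\sqrt n$; the missing edges then number at most $|V\setminus C_1|\cdot n\le 5kn^{3/2}$.

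\textbf{Strategy.} My plan rests on the elementary observation that whenever some vertex $v\in V$ has $d_G(v)\le k-1$, the set $N_G(v)$ is a vertex cut of size at most $k-1$, so $G$ is not $k$-connected; crucially, this cut persists after the addition of any edge not incident to $v$, so \emph{every} non-edge of $G[V\setminus\{v\}]$ is automatically legal. Based on this, Max will proceed as follows: at each of his turns he identifies a vertex $v^*$ of minimum degree in the current graph; if $d_G(v^*)\le k-1$ and $G[V\setminus\{v^*\}]$ is not complete, Max plays an arbitrary legal edge not incident to $v^*$, and otherwise he plays any legal edge. In particular Max never himself raises $d(v^*)$, so the protected vertex can be ``destroyed'' only by Mini, at a cost of one Mini move per increment of $d(v^*)$.

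\textbf{Analysis.} I track the set $W$ of vertices which at some point of the game have acted as the designated $v^*$. Each time Mini pushes $d(v^*)$ past $k-1$ Max retires $v^*$ into $W$ and promotes a fresh minimum-degree vertex. The heart of the proof is to establish $|W|\le 5k\sqrt n$. I plan to do this by a double-counting argument: each member of $W$ absorbed $\Omega(k)$ distinct Mini moves, while simultaneously Max's moves, by construction, rapidly densify $G[V\setminus W]$ (since every non-edge outside $v^*$ is legal while $v^*$ is alive, Max always has a move there). Once $|W|$ reaches order $\sqrt n$, this densification has raised the degree of every vertex of $V\setminus W$ well above $k-1$, so no fresh candidate for the protected role exists, and the procedure terminates. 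With $|W|\le 5k\sqrt n$ in hand, the final graph restricted to $V\setminus W$ is a clique, $W$ is absorbed into the saturated structure in a controlled fashion, and the total number of non-edges at the end is at most $\binom{|W|}{2}+|W|(n-|W|)\le 5kn^{3/2}$.

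\textbf{Main obstacle.} The delicate part is the endgame, once every vertex of $V\setminus W$ has large degree and no low-degree witness to non-$k$-connectivity survives. Here the cut of size $\le k-1$ must come from a different source, namely a genuine structural cut $S$ with $|S|=k-1$ together with two components of $G-S$ entirely contained in $W$. The technical work consists in showing, via the structural description of $\mathcal{C}_k$-saturated graphs recalled above, that at this moment the graph is already close enough to the extremal shape that Max can continue legally until the missing edges indeed drop to at most $5kn^{3/2}$, completing the desired lower bound on $s(n,\mathcal{C}_k)$.
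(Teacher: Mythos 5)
Your approach is genuinely different from the paper's. The paper's Max does not try to ``protect'' a low-degree witness; instead he explicitly \emph{builds a robust scaffold}: a Hamilton-like path covering almost all vertices, together with $k$ designated anchor vertices, each of which he joins to every $\lceil 4\sqrt n\rceil$-th path vertex. The presence of this structure in the final saturated graph directly forces the side $B$ of any $(k-1)$-cut not containing the surviving anchor to be a union of fewer than $k$ short path segments plus the $O(k+\sqrt n)$ off-path vertices, giving $|B|\le 5k\sqrt n$ without any degree bookkeeping. Your opening observation (a vertex of degree $\le k-1$ certifies that every non-incident non-edge is legal) is correct and gives a clean reason why Max always has a move, but it does not by itself buy any control over the shape of the final cut.

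The gap in your argument is the claim $|W|\le 5k\sqrt n$, together with the assertion that once $|W|$ is of order $\sqrt n$, Max's densification has pushed every remaining degree above $k-1$. A rough count shows this cannot be right for fixed $k$ and large $n$: if each retirement absorbs $\Theta(k)$ of Mini's moves, then by the time $|W|=\Theta(k\sqrt n)$ Max has played only $\Theta(k^2\sqrt n)$ moves, which distribute an average degree of $\Theta(k^2/\sqrt n)=o(1)$ over $V\setminus W$ -- nowhere near $k-1$. Conversely, for every vertex of $V\setminus W$ to reach degree $\ge k$ one needs $\Omega(nk)$ edges, so the process runs for $\Omega(n)$ retirements, not $O(\sqrt n)$. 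Moreover, even if $|W|$ were small, the conclusion requires $G[V\setminus W]$ to be essentially complete at the end, and the ``arbitrary legal edge'' phrasing of Max's strategy does not force this: Mini is free to densify two linear-size vertex classes separately and to steer the graph toward a saturated state in which both parts $C_1,C_2$ of your structural decomposition have size $\Theta(n)$, and nothing in your strategy description prevents Max's arbitrary moves from being routed inside those classes. So the endgame analysis (your ``main obstacle'' paragraph) is not merely delicate -- it is missing the mechanism that makes one side small, which in the paper is supplied by the anchored path.
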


Using a different proof technique, for every $k \geq 5$ we can
improve the error term in the bound given in
Theorem~\ref{th::allKconnectivity}.

\begin{theorem} \label{th::Kconnectivity}
$s(n, {\mathcal C}_k) \geq \binom{n}{2} - (k-1)(2k-4)[n -
(k-1)(2k-3)]$ for every $k \geq 5$ and sufficiently large $n$.
\end{theorem}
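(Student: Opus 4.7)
First I would show that, for $n$ sufficiently large compared to $k$, every ${\mathcal C}_k$-saturated graph $G$ on $[n]$ admits a decomposition $V=W\cup A\cup B$ in which $|W|=k-1$, both $A$ and $B$ are non-empty cliques, $W$ is joined completely to $A\cup B$, and there are no edges between $A$ and $B$; in particular the non-edges of $G$ are precisely the $|A|\cdot|B|$ pairs in $A\times B$. The verification uses any minimum separator $S$ of $G$ (which has size at most $k-1$) together with four short observations, each contradicting saturation when it fails: (i) $G-S$ has exactly two components, else an edge between two of three components would preserve $S$; (ii) each component is a clique, else an internal non-edge would preserve $S$; (iii) every vertex of $S$ has degree $n-1$, else a non-edge incident to $S$ would preserve $S$; and (iv) $|S|=k-1$, by augmenting $S$ with a vertex of $A$ (or of $B$ when $|A|=1$) after adding an arbitrary $A$-$B$ non-edge.

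\textbf{Strategy for Max.} Write $a:=|A|$ for the size of the small side in the final decomposition. The number of non-edges equals $a(n-(k-1)-a)$, an increasing function of $a$ on the relevant range, so Max's objective reduces to forcing $a\le(k-1)(2k-4)$. Since every vertex of the final $A$ has degree exactly $a+k-2$ while every vertex of $B\cup W$ has degree at least $n-1-a$, the $A$-vertices are precisely the low-degree vertices of the final graph. I would therefore have Max play a minimum-degree strategy: on each of his turns, Max identifies a vertex $v$ of current minimum degree and plays a legal edge $vu$ for some suitably chosen $u$ (ideally also of small degree). The effect is to keep pushing the minimum degree upwards, restricting how many vertices can remain trapped at the low degree that $A$-membership requires.

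\textbf{Quantitative analysis and main obstacle.} Combined with the structural lemma, the strategy should yield the counting claim that under Max's play, at the end of the game at most $(k-1)(2k-4)$ vertices have degree at most $(k-1)(2k-4)+k-2$. This forces $a\le(k-1)(2k-4)$, and hence the number of non-edges in the final graph is at most $(k-1)(2k-4)\cdot[n-(k-1)(2k-3)]$, which gives the theorem after subtracting from $\binom{n}{2}$. The main obstacle is the legality verification: whenever the current graph is not yet saturated and the minimum-degree vertex $v$ has degree below the target threshold, Max needs a legal edge incident to $v$. I plan to extract one by exhibiting a small separator that contains $N(v)$ together with a few additional vertices and that survives the addition of $vu$ for an appropriate partner $u$ on the large side. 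The hypothesis $k\ge 5$ and the precise threshold $(k-1)(2k-3)$ should both emerge from the arithmetic of this separator-augmentation, combined with the averaging/pigeonhole step that converts a guaranteed sequence of degree-raising moves into the claimed upper bound on $a$.
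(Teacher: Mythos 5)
Your structural analysis of $\mathcal{C}_k$-saturated graphs is correct: every such graph for large $n$ is of the form $K_{k-1}+(K_a\cup K_b)$, and this is indeed the structure the paper exploits (it is used implicitly when the paper says $xy\notin E(G)$ iff $x\in A$, $y\in B$). But the strategy you propose is a genuinely different one from the paper's, and you have not shown that it works; the two places you flag as ``obstacles'' are exactly where the argument is missing, and I do not believe the minimum-degree heuristic gets you to the stated bound without a fundamentally new idea.

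The paper's Max does something much more targeted than ``raise the minimum degree.'' He reserves a set $V_0$ of size $r=n/(2k-3)$ in advance and, in a first phase, claims the edges $u_i v_{\lceil ir/t\rceil}$ so that every subset $B$ of $V\setminus V_0$ is forced to satisfy $|N_G(B,V_0)|\ge |B|r/t$ by the end of the phase; in a second phase he builds a $k$-connected graph inside $V_0$. The legality verification is then trivial (the total number of edges played in both phases is below the $kn/2$ threshold needed for $k$-connectivity, so any free edge is legal), and the quantitative bound drops out immediately: $V_0\setminus S$ lies entirely in the big component $A$ for any $(k-1)$-cut $S$, so a set $B$ on the other side with $|B|>t(k-1)/r=(k-1)(2k-4)$ would have at least $k$ neighbours in $V_0$, one of them outside $S$, contradicting that $S$ separates $A$ from $B$. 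The numbers $(k-1)(2k-4)$ and $(k-1)(2k-3)$ are the direct arithmetic consequence of the ratio $r/t$.

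In your proposal, by contrast, the central counting claim --- ``under Max's play, at the end of the game at most $(k-1)(2k-4)$ vertices have degree at most $(k-1)(2k-4)+k-2$'' --- is asserted, not proved, and I see no route from a bare minimum-degree strategy to a bound on $a$ that is independent of $n$. Notice that in the final graph $G=K_{k-1}+(K_a\cup K_b)$ the vertices of $A$ have degree exactly $a+k-2$, so if Mini succeeds in keeping $a$ on the order of $n/10$, say, those ``low-degree'' vertices still end up with linear degree; a strategy that merely pushes the minimum degree up step by step does not in itself prevent a linear-size clique from forming on one side of the cut. The legality step you defer (exhibiting a separator containing $N(v)$ that survives adding $vu$) is a second, independent gap, and it is not clear how it is supposed to produce the precise thresholds. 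As it stands the proposal identifies the correct endgame structure and the correct quantity to bound, but the strategy and the proof that it achieves the bound are both absent.
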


\begin{remark} \label{rem::weakBounds}
The lower bounds on $s(n, {\mathcal C}_k)$ given in
Theorems~\ref{th::allKconnectivity} and~\ref{th::Kconnectivity}
are not as tight as the lower bound on $s(n, {\mathcal C})$
given in~\cite{CKRW}, which is matching the upper bound. Since ${\mathcal C}_k \subseteq {\mathcal C}$
for every $k \geq 1$, it may seem at first sight like $s(n, {\mathcal C}_k) \geq
s(n, {\mathcal C}) = \binom{n-2}{2} + 1$ should hold as well.
But as we will see later (see Remark~\ref{rem::monotonicity} below),
such an implication is not true in general.
\end{remark}

We now move on to study saturation games in which both players are
required to keep the chromatic number of the graph below
a certain threshold. For every integer $k \geq 2$ we would like to
determine $s(n, \chi_{> k})$, where $\chi_{> k}$ is the property of
having chromatic number at least $k+1$ (obviously $s(n, \chi_{> 1})
= 0$). It is easy to see that if $H$ is a graph on $n \geq k$
vertices which is saturated with respect to $\chi_{> k}$, then $H$
is complete $k$-partite. From this it easily follows that $sat(n,
\chi_{> k}) = (k-1)(n-1) - \binom{k-1}{2}$ and $ex(n, \chi_{> k}) =
\sum_{0 \leq i < j \leq k-1} \lfloor \frac{n+i}{k} \rfloor \cdot
\lfloor \frac{n+j}{k} \rfloor = \left(1 - 1/k + o(1) \right)
\binom{n}{2}$. Very recently, it was shown in~\cite{CKRW} that $s(n,
\chi_{> 2})$ is equal to the trivial upper bound, that is, $s(n,
\chi_{> 2}) = ex(n, \chi_{> 2}) = \lfloor n^2/4 \rfloor$.

Our first result regarding colorability games shows that, in
contrast to the $(n, \chi_{> 2})$ game, Mini does have a strategy to
ensure that $s(n, \chi_{> 3})$ is smaller than $ex(n, \chi_{> 3})$
by a non-negligible fraction.

\begin{theorem} \label{th::3colorability}
$s(n, \chi_{> 3}) \leq 21 n^2/64 + O(n)$.
\end{theorem}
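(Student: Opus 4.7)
The plan is as follows.

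I will first reduce the theorem to a structural target. Any graph saturated for $\chi_{>3}$ must be complete multipartite with at most $3$ parts, so in the generic case the final graph has three parts of sizes $(a, b, c)$ with $a+b+c = n$ and total edge count $ab+bc+ca$. If $c = \min(a, b, c)$, then by AM-GM $ab+bc+ca = c(n-c) + ab \leq c(n-c) + ((n-c)/2)^2 = (n-c)(3c+n)/4$; this last expression is strictly increasing in $c$ on $[0, n/3]$ (derivative $(n-3c)/2$) and equals $21n^2/64$ at $c = n/4$ (extremum at the partition $(3n/8, 3n/8, n/4)$). So it suffices to give a strategy for Mini that forces one part of the final graph to have size at most $n/4 + O(1)$.

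The proposed strategy is a \emph{book lock-in}. Mini picks two distinguished vertices $x, y$ and a target set $S \subseteq V \setminus \{x, y\}$ of size $\lceil n/4 \rceil$; her plan, interleaved throughout the game, is to (i) play the edge $xy$ as early as possible, and (ii) for every $s \in S$, play both edges $xs$ and $ys$. Once (i) and (ii) hold for a given $s$, in every $3$-coloring of the current graph $s$ is forced into the unique color class distinct from those of $x$ and $y$ (call it color $3$). After $S$ is fully locked, all intra-$S$ edges become illegal, and since the game saturates to a complete $3$-partite graph, color class $3$ will be one of the final parts; it will equal exactly $S$ provided no vertex outside $S$ is also forced into color $3$.

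The hard part is countering Max's disruption. His primary threat is to play edges $xv$ and $yv$ for some $v \notin S$, forcing $v$ into color class $3$ and enlarging the small part. Mini's counter is reactive: once she has locked at least one $s_0 \in S$ (which happens after her first three moves $xy, xs_0, ys_0$), she responds to any Max edge $xv$ or $yv$ (with $v \notin S$) on her next turn by playing $vs_0$; this forces $v$'s color out of $3$ and makes Max's intended companion edge illegal. Hence only the $O(1)$ vertices that Max fully locks in the brief opening window, before $s_0$ is locked, can end up in color class $3$. Max's alternative is to play edges within $S$ to spoil Mini's targets and reduce the final $|S|$; but this is self-defeating by the monotonicity in the first paragraph, because a smaller $|S|$ yields fewer edges overall (namely at most $g(|S|)$, where $g(c) = (n-c)(3c+n)/4$). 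Mini nevertheless adapts by dropping spoiled targets from $S$ and picking replacements from a pool of unspoiled candidates.

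The subtlest step will be the combined bookkeeping: against an adversarial Max mixing spoiling, forcing, and devoting his remaining moves to balancing the two larger color classes, one must show that in every scenario the color class containing $S$ has final size $|C| = |S| + O(1) \leq n/4 + O(1)$. Combined with the reduction in the first paragraph this gives $s(n, \chi_{>3}) \leq g(|C|) + O(n) \leq g(n/4) + O(n) = 21n^2/64 + O(n)$.
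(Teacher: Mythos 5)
The reduction in your first paragraph is sound and matches the paper's: force one color class to have size $\leq n/4 + O(1)$, and the $21n^2/64$ bound follows. But the strategy you propose is genuinely different from the paper's, and I believe it has a gap that you have not closed.

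The paper's Mini fixes a single vertex $v_0$ and plays \emph{only} edges incident to $v_0$, sending it toward a bottom vertex in every move; the bound on the size of $v_0$'s color class then comes from a careful accounting lemma showing that every vertex Max pushes into $v_0$'s class costs him at least $3$ moves, while each of Mini's moves raises $d(v_0)$ by exactly $1$. This rigidity is what makes the bookkeeping tractable. Your Mini instead fixes two vertices $x,y$, locks a prescribed set $S$ into the third class, and reactively counters Max; the bookkeeping burden you flag as ``the subtlest step'' is where the argument breaks.

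The concrete problem is that your counter (respond to Max's $xv$ or $yv$ by playing $vs_0$) only guards against the direct attack. Max can instead build an auxiliary vertex $w$ with $xw, ws_0 \in E$ (and, as $S$ grows, $ws\in E$ for every locked $s$). Then against a fresh vertex $v$ he plays $vw$ first — which your strategy does not flag — and $vx$ second. Now $v$ and $s_0$ already share the common triangle $\{x,w\}$ (all of $vx, vw, s_0 x, s_0 w, xw$ are present), so $vs_0$ would complete a $K_4$ and is illegal; the same obstruction blocks $vs$ for every $s$ with $ws\in E$. Since $v$ is adjacent to both $x$ and $w$, which land in the two large parts, $v$ is forced into the third class and cannot be rescued. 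The cost to Max is $O(n)$ moves to set up $w$ and then two moves per boxed-in $v$, so a priori he can push $\Theta(n)$ extra vertices into $C$, which is exactly what destroys the $n/4$ bound. Patching this requires Mini to also react to $vw$-type moves and to track which auxiliary pivots Max is building, and you give no argument that a race of this kind is won by Mini; the claim that only ``$O(1)$ vertices that Max fully locks in the brief opening window'' can enter $C$ is not justified once these indirect attacks are available. By contrast, the paper's proof never has to argue about any such race, because Mini's strategy is a single fixed rule and the entire burden is carried by the charging argument in Claims~\ref{c::MiniEnsuresGoodGraph}--\ref{c::SizeOfTop}.
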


Additionally, we prove that for every sufficiently large $k$, Max
has a strategy to ensure that $s(n, \chi_{> k})$ is not much smaller
than $ex(n, \chi_{> k})$.

\begin{theorem} \label{th::Kcolorability}
There exists a real number $C$ such that $s(n, \chi_{> k}) \geq
\left(1 - C \log k/k \right) \binom{n}{2}$ holds for every positive
integer $k$ and sufficiently large $n$.
\end{theorem}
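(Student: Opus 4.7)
The plan is to prove the theorem by exhibiting a strategy for Max that forces the final complete $k$-partite graph to have every color class of size $O(n\log k/k)$. Recall that any graph saturated with respect to $\chi_{>k}$ is complete $k$-partite, so writing $U_1,\ldots,U_k$ for the color classes of the final graph $G$,
\[
s(n,\chi_{>k}) \;=\; \binom{n}{2}-\sum_{j=1}^k\binom{|U_j|}{2} \;\ge\; \binom{n}{2}-\frac{n}{2}\cdot\max_j|U_j|,
\]
so it suffices to force $\max_j|U_j| \le C\,n\log k/k$ for some absolute constant $C$.

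\textit{Strategy for Max.} Fix, in advance, a balanced partition $[n] = V_1\cup\cdots\cup V_k$ with $|V_i|\in\{\lfloor n/k\rfloor,\lceil n/k\rceil\}$. On each of his turns, Max plays an arbitrary legal edge whose endpoints lie in distinct parts of this partition; if no such edge is legal, Max plays any legal edge. I would then show that at the end of the game the partition $\{U_j\}$ is a small perturbation of Max's partition $\{V_i\}$.

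\textit{Analysis.} The first step is a structural lemma: a cross-edge $\{u,v\}$ (with $u\in V_i$, $v\in V_\ell$, $i\ne\ell$) is declared illegal only once every $k$-coloring of the current graph assigns the same color to $u$ and $v$. Such an "obstruction" requires a witnessing sub-configuration (roughly, a copy of $K_{k-1}$ together with cross-edges forcing $u,v$ onto its unique remaining color), and hence has a cost of $\Omega(k)$ edges incident to $\{u,v\}$. Since Mini plays at most $\binom{n}{2}/2$ edges in total, the number of cross-edges she can destroy in this way is bounded; meanwhile Max plays all surviving cross-edges. A double-counting argument then controls how many vertices of a given $V_i$ can end up outside a single final class $U_j$, and summing this deviation over the $k$ color classes — with a union bound incurring the factor $\log k$ — yields $\max_j |U_j|\le n/k + Cn\log k/k = O(n\log k/k)$, completing the proof.

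\textit{Main obstacle.} The technical heart is the second step: pinpointing the precise structural witness of a cross-edge becoming illegal, and quantifying how many such witnesses Mini's budget allows. I expect the $\log k$ factor in the error term to arise exactly here, either through the union bound over the $k$ color classes or, alternatively, from a concentration estimate for the random variables $|U_j\cap V_i|$ when Max's partition is instead chosen uniformly at random (a randomized variant that can be derandomized by averaging, since Max's strategy is deterministic once the partition is fixed). Controlling the interaction between Mini's within-part edges and the growing cross-structure — so that the game cannot end prematurely with large imbalanced color classes — is where the bulk of the work lies.
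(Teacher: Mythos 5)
The high-level reduction is right: every $\chi_{>k}$-saturated graph is complete $k$-partite, so it suffices to force $\max_j|U_j|=O(n\log k/k)$. Your structural observation also has a correct kernel: if $uv$ is illegal then necessarily $d(u)\geq k-1$ and $d(v)\geq k-1$ (otherwise one endpoint has a free color after discarding its neighbors' colors and the color of the other endpoint). This is exactly the sufficient condition for legality that the paper exploits --- but stated contrapositively, not via a $K_{k-1}$ witness; obstructions to $k$-colorability need not contain any clique, so the ``copy of $K_{k-1}$'' gloss is misleading.

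The real gap is the budget/double-counting step, which is where the whole proof lives and which you leave unspecified. The $\Omega(k)$ edges ``costing'' an illegal cross-edge are not disjoint across cross-edges: Mini can inflate the degree of a \emph{single} vertex $u$ above $k-1$ once, after which every cross-edge incident to $u$ is a \emph{candidate} for illegality, and her spent edges get amortized over all of them. Concretely, Mini could try to preserve one part $V_1$ of your fixed partition as a color class: she never plays inside $V_1$, and plays to make the rest of the graph force $V_1$ to be monochromatic. Since Max's strategy is committed to a publicly known fixed partition and never plays inside $V_1$ either, no edge inside $V_1$ ever gets played, and $|U_1|\geq n/k$ is free for Mini --- which is fine, but you now have to show she cannot do strictly better than $n/k$ by a $\log k$ factor, and the sketch gives no mechanism for that. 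Nothing in the double-counting forces the final classes to track the $V_i$; you need an argument that bounds the independence number of the final graph, and a fixed public partition does not obviously give you one.

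The paper sidesteps all of this by having Max play an \emph{adaptive randomized} strategy: on each turn he plays a uniformly random legal edge with at least one endpoint of degree at most $k-2$ (with an occasional ``semi-random'' corrective move near the edge Mini just played). The point is precisely the degree condition you identified, used as a guarantee of legality, not as a budget. One then shows that a.a.s.\ every fixed set of size $r=Cn\log k/k$ receives an internal edge before the game ends; the $\log k$ comes from balancing the $\binom{n}{r}$ union bound against the exponential concentration (a Chernoff-type bound), not from a union bound over $k$ classes. Finally, since the saturation game is a finite perfect-information game without chance moves, Max having a random strategy that wins a.a.s.\ against Mini's best reply implies he has a deterministic winning strategy --- this is the derandomization, and it is not ``averaging over partitions.'' A fixed (or a single pre-committed random) partition seems genuinely weaker here, because Mini gets to see it and react; the adaptive randomness is doing real work.
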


Lastly, we study saturation games in which both players are required
to keep the size of every matching in the graph below a
certain threshold. Starting with the property $\mathcal{PM}$ of
admitting a perfect matching, it is easy to see that $ex(n,
\mathcal{PM}) = \binom{n-1}{2}$ for every even $n$. Moreover, using
Tutte's well-known necessary and sufficient condition for the
existence of a perfect matching~\cite{Tutte}, Mader~\cite{Mader}
characterized all graphs which are saturated with respect to
$\mathcal{PM}$. Using this characterization, it is not hard to show
that $sat(n, \mathcal{PM}) = \Theta(n^{3/2})$. We prove that $s(n,
\mathcal{PM})$ is almost as large as $ex(n, \mathcal{PM})$.

\begin{theorem} \label{th::perfectMatchingGame}
Let $n \geq 8$ be an even integer, then $s(n, \mathcal{PM}) \geq
\binom{n-4}{2}$.
\end{theorem}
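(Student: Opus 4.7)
My plan is to exhibit a strategy for Max which ensures at least $\binom{n-4}{2}$ edges are played by the end of the game. At the outset, Max will designate four vertices $v_1, v_2, v_3, v_4$ and let $A := V(K_n) \setminus \{v_1, v_2, v_3, v_4\}$, so that $|A| = n-4$. Max's rule will then be: on each of his turns, play a legal edge with both endpoints in $A$ if one is available, and otherwise play any legal edge. The target is to show that the game cannot end before $G[A]$ becomes the complete graph $K_{n-4}$; once this happens, the graph $G$ will contain all $\binom{n-4}{2}$ edges of $K_{n-4}$ on $A$, which proves the theorem.

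The main step will be to show that whenever $G$ has no perfect matching and $G[A]$ is missing an edge, there is still a legal edge $xy$ with $x, y \in A$. By Tutte's theorem, this is equivalent to exhibiting a missing edge $xy \in \binom{A}{2}$ such that $G - \{x, y\}$ has no perfect matching, and I will witness this by producing a set $S \subseteq V \setminus \{x, y\}$ with $o(G - \{x, y\} - S) > |S|$. The simplest situation is when some $v_i$ is still isolated in $G$: taking $S = \emptyset$ yields at least one odd component (from the isolated $v_i$), and a parity argument (using that $n-2$ is even) then forces $o(G - \{x,y\}) \geq 2$, proving the legality of $xy$. An analogous argument, with $S$ equal to a single $v_i$-neighbor, handles the case where some $v_i$ has at most two neighbors in $A$.

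The hard part will be the case when every $v_i$ has accumulated several neighbors in $A$, so that no $v_i$ is automatically isolated in $G - \{x, y\}$ for any candidate missing edge. My plan for this regime is to construct $S$ by including a suitably chosen subset of the $A$-neighbors of the $v_i$'s, which in effect isolates those $v_i$'s in $G - \{x, y\} - S$ and can produce up to four additional odd components. The key observation enabling this is that under Max's strategy every edge incident to $\{v_1, v_2, v_3, v_4\}$ has been contributed by Mini, and moreover the hypothesis that $G$ has no perfect matching strongly limits the structure of these edges---for instance, Hall's condition for the bipartite graph from $\{v_1, v_2, v_3, v_4\}$ to $A$ must fail, or $G[A \setminus U]$ must lack a perfect matching for every relevant set $U$ of size at most four that could be used to absorb the $v_i$'s. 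A parity analysis then balances $|S|$ against the number of odd components in $G - \{x,y\} - S$ (including those contributed by $\{v_1, v_2, v_3, v_4\}$ and any components in the residual induced subgraph on $A$) to yield the required strict inequality. The main technical difficulty will be treating the relevant sub-cases uniformly and showing in each of them that the missing edge $xy$ can be chosen, in coordination with the neighborhoods of the $v_i$'s, so that the construction goes through.
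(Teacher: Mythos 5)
Your approach is genuinely different from the paper's. The paper has Max use the "long path strategy" to build a Hamilton path on $n-3$ or $n-4$ of the vertices, and then falls back on a collection of five carefully prepared end-game lemmas (Lemmas~\ref{lem::nMinus2cycle}--\ref{lem::edge2vertices}) that handle each way Mini can disturb the structure. You instead fix four vertices, let $A$ be the rest, and have Max greedily complete $K_{n-4}$ on $A$. The two ideas are philosophically related (both aim to force a near-clique on $n-4$ vertices), but the paper's Max plays toward a very specific structure precisely so that the end-game analysis becomes tractable, whereas your Max plays \emph{any} legal edge inside $A$.

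The central claim your proof rests on --- that as long as $G[A]$ is not yet complete and $G$ has no perfect matching, there is a legal edge inside $A$ --- is exactly the part you leave open, and it is the heart of the matter. Your treatment of the easy cases (some $v_i$ isolated or of very low degree) is fine, but the "hard regime," where all four of $v_1,\dots,v_4$ have accumulated neighbors in $A$, is waved at rather than argued: you describe a plan (choose $S$ among the $v_i$-neighbors, invoke a failed Hall/Tutte condition, balance parities) without ever exhibiting the set $S$ or the missing edge $xy$. This is not a routine completion. Note also that for $xy$ to be illegal one only needs $G-\{x,y\}$ to have a perfect matching, which can happen once the $v_i$'s have enough neighbors while $G$ still has none; whether such a state is reachable under your Max's play depends on which $A$-edges Max chose earlier, and your strategy makes that choice arbitrarily. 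In other words, it may well be that Max needs to play a \emph{structured} sequence of $A$-edges (as the paper's path-building does) rather than arbitrary legal ones, and nothing in the proposal rules that out. There is also a mild circularity to watch: the statement "every edge meeting $\{v_1,\dots,v_4\}$ was played by Mini" presupposes Max was never stuck, which is what you are trying to prove; this can be repaired by phrasing the argument as an induction on the number of moves, but it needs to be said. As written, the proposal is a plausible programme, not a proof.
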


We then move on to study $s(n, \mathcal{M}_k)$, where
$\mathcal{M}_k$ is the property of admitting a matching of size $k$
for some $k \le n/2$. It was proved by Erd\H{o}s and Gallai
in~\cite{EG} that
$$ ex(n, \mathcal{M}_k) = \max \left\{(k-1)(n-1) - \binom{k-1}{2},
\binom{2k-1}{2} \right\}. $$
Applying the Berge-Tutte
formula~\cite{BT}, Mader~\cite{Mader} also characterized all graphs
which are saturated with respect to $\mathcal{M}_k$, for every $1
\leq k \leq n/2$. Using this characterization, it is not hard to
derive that $sat(n, \mathcal{M}_k) = 3(k-1)$ if $k \leq n/3$,
$sat(n, \mathcal{M}_k) = \Theta(n^2/(n-2k))$ if $n/3 \leq k \leq n/2
- \sqrt{n}$ and $sat(n, \mathcal{M}_k) = \Theta(n^{3/2})$ if $n/2 -
\sqrt{n} \leq k \leq n/2$. Our next result shows that, at least when
$k$ is not too large with respect to $n$, the score $s(n,
\mathcal{M}_k)$ varies in order of magnitude, depending on the
parity of $k$ and the identity of the first player. This is in stark
contrast to all of our results mentioned until now (where changing the
identity of the first player might affect the score of the game,
but only by a negligible margin). Note that, among other results and
using different terminology, $s(n, \mathcal{M}_2)$ was determined
in~\cite{PV}.

\begin{theorem} \label{th::kMatching}
Let $k \geq 2$ be an integer. If Max is the first player and $k$ is
even, or Mini is the first player and $k$ is odd, then $s(n,
\mathcal{M}_k) \geq n-1$. In all other cases $s(n, \mathcal{M}_k)
\leq \binom{2k-1}{2}$.
\end{theorem}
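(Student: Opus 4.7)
The first observation we plan to exploit is a parity computation. A copy of $K_{2k-1}$ together with isolated vertices is saturated for $\mathcal{M}_k$: its matching number is $k-1$, and any additional edge completes a matching of size $k$ by combining it with a $(k-1)$-matching inside the clique. Such a minimal saturated trajectory would last $\binom{2k-1}{2} = (2k-1)(k-1)$ moves, a number that is odd precisely when $k$ is even. Checking the four parity sub-cases, Mini is the player scheduled to make this closing move exactly in the two upper-bound cases stated in the theorem (Mini first with $k$ even, or Max first with $k$ odd); in the two lower-bound cases the closing move of such a minimal trajectory would fall on Max. The plan is to translate this alignment into explicit strategies.

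For the upper bound, the plan is to equip Mini with a strategy that forces the final graph to consist of a copy of $K_{2k-1}$ together with isolated vertices. Mini maintains (adaptively) a set $S$ of $2k-1$ vertices and plays only within $S$, aiming to render $G[S]$ factor-critical, that is, such that $G[S \setminus \{v\}]$ contains a perfect matching for every $v \in S$. A minimal target realising this is an odd cycle $C_{2k-1}$ on $S$, which is factor-critical with just $2k-1$ edges. Once this state is reached, no edge with an endpoint outside $S$ is legal: combining such an edge with a perfect matching of $G[S \setminus \{v\}]$ (for an appropriate $v \in S$, or with any $(k-1)$-matching of $G[S]$ if both endpoints lie outside $S$) yields a matching of size $k$. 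Since every subgraph of $K_{2k-1}$ has matching number at most $k-1$, edges within $S$ remain legal until $K_S$ is complete, so the game must terminate at exactly $\binom{2k-1}{2}$ edges, with Mini playing the closing move by the parity alignment above.

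For the lower bound, Max's strategy will be a book-type construction. He designates a pool of at most $k-1$ hub vertices and, on each of his turns, plays an edge from a hub to a vertex not yet touched by $G$. Such moves preserve a vertex cover contained in the hub set, so the matching number never exceeds $k-1$ and legality is automatic. Mini's attempts to end the game early are sharply restricted, because once the hub matching structure is in place, non-hub-incident edges quickly produce $k$-matchings. The parity hypothesis ensures that Max retains the initiative to perform a new extending move at each of his turns until every vertex has been incorporated into the graph, at which point the accumulated edge count is at least $n-1$.

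The hard part, we expect, lies in the opening phase of the upper-bound argument. In the very early moves, the matching number is well below $k-1$, so Max is free to play edges with an endpoint outside Mini's designated $S$; once such an outside vertex is touched, Mini cannot naively complete factor-criticality of $G[S]$, because the resulting $(k-1)$-matching in $G[S]$ together with the outside edge would realise a $k$-matching and so block the intended move. The fix is to let Mini update $S$ adaptively, swapping out a not-yet-used vertex of $S$ for Max's latest escape vertex, and to continue building the target odd cycle on this updated $(2k-1)$-set. The parity alignment is exactly what guarantees that Mini has enough of her own moves to close the cycle faster than Max can escape outside $S$ repeatedly. A careful move-by-move bookkeeping, combined with the Tutte--Berge description of matching number, should turn this outline into a proof; the symmetric task on the lower-bound side, namely verifying that Mini cannot enforce premature saturation with fewer than $n-1$ edges against the hub strategy, reduces to counting untouched vertices in any allegedly small saturated position.
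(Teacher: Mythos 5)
Your parity computation is correct and does match the case split in the theorem, and your observation that a factor-critical $(2k-1)$-vertex subgraph (e.g.\ an odd cycle $C_{2k-1}$) kills all edges touching a vertex outside it is exactly the idea behind the paper's Lemma~4.7(a). However, both of your strategies have genuine gaps. For the lower bound, the claim that Max's ``hub'' moves have ``automatic'' legality is false. Max playing hub-to-fresh edges only guarantees that the \emph{Max-built subgraph} has a vertex cover inside the hub set; $G$ also contains Mini's edges, which need not touch any hub, so $\nu(G)$ is not bounded by the size of the hub set. Concretely, take $k=3$ with Mini first: Mini plays $a_1a_2$, Max plays $v_1x_1$, Mini plays $a_1a_3$; now $\nu(G)=2$ and $G + v_2x_2$ contains the $3$-matching $\{a_1a_2, v_1x_1, v_2x_2\}$, so Max's next hub-to-fresh move $v_2x_2$ is illegal. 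Separately, even granting that Max can keep extending until every vertex has positive degree, that alone does not give $e(G)\ge n-1$ (a perfect matching touches everything with only $n/2$ edges), so the conclusion of your lower-bound paragraph is unjustified as stated.

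For the upper bound, the adaptive-$S$ idea is in the right spirit but the swap bookkeeping is where the difficulty actually lives, and you have not resolved it: if Max's edge lands on two fresh vertices you must swap two slots, the swapped-out vertices must still be isolated, and meanwhile $\nu(G)$ is climbing, so one must show Mini can close an odd cycle before the game ends for other reasons. The paper sidesteps all of this by invoking its Lemma~1.7 (the ``long path strategy''): either player can build a path containing \emph{all} vertices of positive degree, so there are simply no escape vertices to absorb. Mini runs that lemma to get a path on $2k-4$ or $2k-3$ vertices, closes it into a cycle, and then a short list of explicit end-game lemmas (4.7--4.10) covers Max's possible interruptions; Max's lower-bound strategy likewise uses the long path strategy plus a lemma that lets Max connect every vertex into one component, and a short argument that a disconnected saturated outcome with fewer than $n-1$ edges cannot happen. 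So the paper's proof proceeds by a qualitatively different device, and your outline, while pointing at the correct parity phenomenon and the correct factor-critical target structure, would need the equivalent of that long-path lemma plus the end-game case analysis to become a proof.
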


\begin{remark} \label{rem::kMatching}
It follows from Theorem~\ref{th::kMatching} that that if $k$ is
fixed then, depending on the parity of $k$ and the identity of the
first player, either $s(n, \mathcal{M}_k) = \Theta(sat(n,
\mathcal{M}_k))$ or $s(n, \mathcal{M}_k) = \Theta(ex(n,
\mathcal{M}_k))$.
\end{remark}

\begin{remark} \label{rem::monotonicity}
It follows from Theorem~\ref{th::kMatching} that scores of saturation games
are not monotone in the following sense. There are monotone increasing
graph properties ${\mathcal P}_1$ and ${\mathcal P}_2$ such that
${\mathcal P}_1 \subseteq {\mathcal P}_2$ and yet $s(n, {\mathcal P}_1) < s(n, {\mathcal P}_2)$.
Also, there are monotone increasing graph properties ${\mathcal P}_1$ and ${\mathcal P}_2$
such that $sat(n, {\mathcal P}_1) < sat(n, {\mathcal P}_2)$ and $ex(n, {\mathcal P}_1) < ex(n, {\mathcal P}_2)$,
but $s(n, {\mathcal P}_1) > s(n, {\mathcal P}_2)$.
\end{remark}

\subsection{Notation and preliminaries}

\noindent For the sake of simplicity and clarity of presentation, we
do not make a par\-ti\-cu\-lar effort to optimize some of the
constants obtained in our proofs. We also omit floor and ceiling
signs whenever these are not crucial. Throughout the paper, $\log$
stands for the natural logarithm. We say that a graph property ${\mathcal P}$ holds
\emph{asymptotically almost surely}, or a.a.s.\ for brevity, if the
probability of satisfying ${\mathcal P}$ tends to 1 as the number of
vertices $n$ tends to infinity. Our graph-theoretic notation is
standard and follows that of~\cite{West}. In particular, we use the
following.

For a graph $G$, let $V(G)$ and $E(G)$ denote its sets of vertices
and edges respectively, and let $v(G) = |V(G)|$ and $e(G) = |E(G)|$.
For a set $U \subseteq V(G)$ and a vertex $w \in V(G)$, let $N_G(w,
U) = \{u \in U : wu \in E(G)\}$ denote the set of neighbors of $w$
in $U$ and let $d_G(w, U) = |N_G(w, U)|$. For disjoint sets $U, W
\subseteq V(G)$ let $N_G(W,U) = \bigcup_{w \in W} N_G(w, U)$. We
abbreviate $N_G(w, V(G))$ to $N_G(w)$, and $N_G(W, V(G) \setminus
W)$ to $N_G(W)$. The minimum degree of a graph $G$ is denoted by
$\delta(G)$. Often, when there is no risk of confusion, we omit the
subscript $G$ from the notation above. For a set $S \subseteq V(G)$,
let $G[S]$ denote the subgraph of $G$ induced by the vertices of
$S$. A connected component $C$ of a graph $G$ is said to be
\emph{non-trivial} if it contains an edge. The size of a maximum
matching in a graph $G$ is denoted by $\nu(G)$.

Assume that some saturation game $(H, {\mathcal P})$ is in progress,
where $H$ is a graph on $n$ vertices. The edges of $K_n \setminus G$
are called \emph{free} (recall that at any point during the game, we
use $G$ to denote the graph built by both players up to that point).
A \emph{round} of the game consists of a move by the first player
and a counter move by the second player. We say that a player
follows the \emph{trivial strategy} if in every move he claims an
arbitrary legal edge.

We end this subsection by proving the following lemma which asserts
that, without any saturation restrictions, either player can build a
long path that includes all vertices of positive degree. This lemma
will be useful for the connectivity and the matching games we will
study. We refer to the strategy described in the proof of the lemma
as \emph{the long path strategy}.

\begin{lemma} \label{lem::longPath}
Let $n \geq 3$ and $1 \leq \ell \leq n - 2$ be integers. Then
starting with the empty graph on $n$ vertices, either player can
ensure that, immediately after his $i$th move for some $i$, the
graph $G$ will contain a path $P$ such that the following three
properties are satisfied:
\begin{description}
\item [(a)] The length of $P$ is either $\ell$ or $\ell + 1$;
\item [(b)] If $u \in V(G) \setminus V(P)$, then $d_G(u) = 0$;
\item [(c)] At least one of the endpoints of $P$ has degree one in $G$.
\end{description}
\end{lemma}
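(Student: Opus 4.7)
The plan is to give the player — call her $A$ — a strategy that maintains, right after each of her own moves, the following invariant (I): the current graph $G$ contains a path $P$ whose vertex set is exactly the set of non-isolated vertices of $G$, and $P$ has an endpoint of degree one. I will show that with each of her moves $A$ can make the length of $P$ grow by either $1$ or $2$, starting from $1$ if $A$ moves first and from $2$ if $A$ moves second. Since the resulting sequence of path-lengths attained right after $A$'s moves is strictly increasing with consecutive gaps at most $2$, it must take a value in $\{\ell,\ell+1\}$ at some step $i$; at that step, properties (a), (b), (c) simultaneously hold.

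The opening move is immediate. If $A$ moves first she plays any edge on two previously untouched vertices, producing a path of length $1$ with both endpoints of degree one. If $A$ moves second, let $xy$ be $B$'s first edge; $A$ plays $yz$ for an isolated vertex $z$, producing a path $xyz$ of length $2$ with $x$ of degree one. In both cases (I) holds.

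The heart of the argument is $A$'s response to a single $B$-move when (I) currently holds with path $P = p_0 p_1 \ldots p_t$ and $p_0$ of degree one. There are three types of $B$-move. If $B$ plays an edge $uv$ between two isolated vertices, $A$ plays $p_t u$, producing the path $p_0 \ldots p_t u v$ of length $t+2$. If $B$ plays an edge $p_j u$ from the path to an isolated vertex $u$, then when $j \in \{0,t\}$ $A$ plays $uw$ for a fresh isolated $w$, extending the path by two, while when $0 < j < t$ $A$ plays the \emph{absorbing} edge $u p_{j+1}$: the new path $p_0 \ldots p_j u p_{j+1} \ldots p_t$ is longer by two and the original edge $p_j p_{j+1}$ becomes a harmless chord. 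Finally, if $B$'s move is a chord of $P$, $A$ plays $p_t u$ for any isolated $u$, extending by one. In every case (I) is preserved, the degree-one endpoint is retained, and the length of $P$ strictly grows.

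The main obstacle is the interior case $0 < j < t$: if $A$ naively extended from an endpoint of $P$, the vertex $u$ that $B$ just activated would be left non-isolated but outside the new path, violating (I). The absorbing-chord trick precisely circumvents this. The reason one cannot always force growth by exactly two is the chord case, which is also why the target length is the two-element set $\{\ell,\ell+1\}$ rather than a single value. The remaining checks — that $A$'s intended edge is always legal, and that sufficiently many isolated vertices are available up to the stopping step — are routine and use the hypothesis $\ell \le n-2$ (which guarantees $|V(P)| \le \ell+2 \le n$ throughout).
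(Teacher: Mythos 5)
Your proof is correct and takes essentially the same route as the paper's: maintain, after each of $A$'s moves, a path whose vertex set is exactly the non-isolated vertices and which has a degree-one endpoint, and show $A$ can extend it each round. The paper phrases this as an induction on $\ell$, whereas you phrase it as a monotone-progression argument on the length sequence; these are just two presentations of the same idea.

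Two small remarks. First, a miscount: in the interior subcase $0<j<t$, the inserted path $p_0\ldots p_j\, u\, p_{j+1}\ldots p_t$ has exactly one more vertex and one more edge than $p_0\ldots p_t$, so the length grows by one, not two. This does not damage the argument, since you only invoke the bound that gaps are at most $2$. Second, the absorbing-chord response in that subcase is more elaborate than necessary: since the invariant already tolerates chords on the path, $A$ may simply append the newly activated vertex to the far end by playing $p_t u$, yielding the path $p_0\ldots p_t u$ with chord $p_j u$ and preserving the degree-one endpoint $p_0$; this is what the paper does and it handles the boundary and interior cases uniformly. Your version is correct, just slightly heavier than needed.
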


\begin{proof}
We prove our claim by induction on $\ell$. For convenience we denote
the player who wishes to build the path $P$ by ${\mathcal A}$ and
the other player by ${\mathcal B}$. For $\ell = 1$ the correctness
of our claim is obvious as, in his first move, ${\mathcal A}$ can
build a path of length 1 if he is the first player and of length 2
otherwise. Assume our claim holds for some $1 \leq \ell < n - 2$; we
will prove it holds for $\ell + 1$ as well. First, ${\mathcal A}$
builds a path $P_{\ell}$ which satisfies properties (a), (b) and (c)
for $\ell$; the induction hypothesis ensures that ${\mathcal A}$ has
a strategy to do so. If $P_{\ell}$ is of length $\ell+1$ then there
is nothing to prove, so assume $P_{\ell}$ is of length $\ell$. Let
$P_{\ell} = (u_0, \ldots, u_{\ell})$ and assume without loss of
generality that $d_G(u_0) = 1$. Let $xy$ denote the edge ${\mathcal
B}$ claims in his subsequent move. We distinguish between the
following four cases:
\begin{description}
\item [(1)] If $\{x,y\} \subseteq V(P_{\ell})$, then ${\mathcal A}$ claims $u_{\ell} z$ for some isolated vertex $z$.
\item [(2)] If $\{x,y\} \cap V(P_{\ell}) = \emptyset$, then ${\mathcal A}$ claims $u_{\ell} x$.
\item [(3)] If $x \in \{u_0, u_{\ell}\}$ and $y \notin V(P_{\ell})$, then ${\mathcal A}$ claims $y z$ for some isolated vertex $z$.
\item [(4)] If $x \in V(P_{\ell}) \setminus \{u_0, u_{\ell}\}$ and $y \notin V(P_{\ell})$, then ${\mathcal A}$ claims $u_{\ell} y$.
\end{description}
It is easy to see that in all of the four cases above, ${\mathcal
A}$ can follow the proposed strategy and, by doing so, he builds a
path which satisfies conditions (a), (b) and (c) for $\ell + 1$.
\end{proof}

The rest of this paper is organized as follows. In
Section~\ref{sec::connectivity} we prove
Theorems~\ref{th::allKconnectivity} and~\ref{th::Kconnectivity}. In
Section~\ref{sec::color} we prove Theorems~\ref{th::3colorability}
and~\ref{th::Kcolorability}. In Section~\ref{sec::matching} we prove
Theorems~\ref{th::perfectMatchingGame} and~\ref{th::kMatching}.
Finally, in Section~\ref{sec::openprob} we present some open
problems.

\section{Connectivity games} \label{sec::connectivity}

In this section we study connectivity games, that is, saturation
games in which both players are required to keep the connectivity of
the graph below a certain threshold.

\textbf{Proof of Theorem~\ref{th::allKconnectivity}} Since $s(n,
{\mathcal C}_1)$ was determined in~\cite{CKRW}, we can assume that
$k \geq 2$. We present a strategy for Max; it is divided into the
following three stages:

\begin{description}
\item [Stage I:] Max follows the long path strategy until $G$ contains a path $P = (u_0, \ldots, u_{\ell})$ of length $\ell \in \{n - k - \lceil\sqrt{n}\rceil - 1, n - k - \lceil\sqrt{n}\rceil\}$ which includes all vertices of positive degree in $G$. He then proceeds to Stage II.
\item [Stage II:] Let $t$ and $r$ be the unique integers satisfying $\ell + 1 = \lceil 4 \sqrt{n} \rceil t + r$ and $0 \leq r < \lceil 4 \sqrt{n} \rceil$. Let $v_1, \ldots, v_k$ be $k$ arbitrary vertices of $V(G) \setminus V(P)$ and let $F = \{v_i u_{\lceil 4 \sqrt{n} \rceil j} : 1 \leq i \leq k, \; 1 \leq j \leq t\}$. In each of his moves in this stage, Max claims an arbitrary free edge of $F$. Once no such edge exists, this stage is over and Max proceeds to Stage III.
\item [Stage III:] Throughout this stage, Max follows the trivial strategy.
\end{description}

Our first goal is to prove that Max can indeed follow the proposed
strategy. This is obvious for Stage III and follows for Stage I by
Lemma~\ref{lem::longPath} (note that there are isolated vertices at
the end of Stage I so $G$ is certainly not $k$-connected at that
point). Since every vertex of $V(G) \setminus V(P)$ is isolated at
the end of Stage I and at most $2|F|+1 \leq 2k \sqrt{n}/4 < k (k +
\sqrt{n} - 1)/2$ edges are claimed by both players throughout Stage
II, it follows that, at the end of Stage II, there exists a vertex
$u \in V(G) \setminus V(P)$ such that $d_G(u) < k$; in particular,
$G$ is not $k$-connected at that point. Hence, Max can follow Stage
II of the proposed strategy.

At the end of the game, $G$ is saturated with respect to
$k$-connectivity, that is, $G$ is not $k$-connected but $G + uv$ is
$k$-connected for every $u,v \in V(G)$ such that $uv \notin E(G)$.
Let $S \subseteq V(G)$ be a cut set of $G$ of size $k-1$. Since $|S|
< k$, it follows that $\{v_1, \ldots, v_k\} \setminus S \neq
\emptyset$; assume without loss of generality that $v_1 \notin S$.
Let $A$ denote the connected component of $G \setminus S$ which
contains $v_1$ and let $B = V(G) \setminus (A \cup S)$. We claim
that $|B| \leq 5k \sqrt{n}$. This is obvious if $B \subseteq (V(G)
\setminus V(P)) \cup \{u_0, \ldots, u_{\lceil 4 \sqrt{n} \rceil -
1}, u_{\lceil 4 \sqrt{n} \rceil t + 1}, \ldots, u_{\ell}\}$. Assume
then that there exists a vertex $u_i \in B \cap \{u_{\lceil 4
\sqrt{n} \rceil}, \ldots, u_{\lceil 4 \sqrt{n} \rceil t}\}$; let $1
\leq j < t$ denote the unique index such that $\lceil 4 \sqrt{n}
\rceil j < i < \lceil 4 \sqrt{n} \rceil (j+1)$ (note that, for every
$1 \leq j \leq t$, we have $u_{\lceil 4 \sqrt{n} \rceil j} \notin B$
as $v_1 u_{\lceil 4 \sqrt{n} \rceil j} \in E(G)$ holds by Stage II
of the proposed strategy). Since $u_i \in B$, it follows that there
is no path between $u_i$ and $v_1$ in $G \setminus S$. It follows
that $|S \cap \{u_{\lceil 4 \sqrt{n} \rceil j}, \ldots, u_{\lceil 4
\sqrt{n} \rceil (j+1)}\}| \geq 2$. Since this is true for every
vertex of $B \cap \{u_{\lceil 4 \sqrt{n} \rceil}, \ldots, u_{\lceil
4 \sqrt{n} \rceil t}\}$, it follows that $B \cap \{u_{\lceil 4
\sqrt{n} \rceil}, \ldots, u_{\lceil 4 \sqrt{n} \rceil t}\}$ is the
union of at most $|S|-1$ subpaths of $P$, each of length at most $4
\sqrt{n}$. We conclude that $|B| \leq 5k \sqrt{n}$ as claimed.
Since, as noted above, $G$ is saturated with respect to
$k$-connectivity, it follows that $xy \notin E(G)$ if and only if $x
\in A$ and $y \in B$ (or vice versa). Hence $e(G) = \binom{n}{2} -
|A||B| \geq \binom{n}{2} - 5k \sqrt{n} (n - k + 1 - 5k \sqrt{n})
\geq \binom{n}{2} - 5k n^{3/2}$ as claimed. {\hfill $\Box$
\medskip\\}

\textbf{Proof of Theorem~\ref{th::Kconnectivity}} We present a
strategy for Max; it is divided into the following three stages:
\begin{description}
\item [Stage I:] Let $r = \frac{n}{2k-3}$, let $t = n - r$, let
$V_0 = \{v_1, \ldots, v_r\}$ be a subset of $V(G)$ and let $V(G)
\setminus V_0 = \{u_1, \ldots, u_t\}$. Max's goal in this stage is
to ensure that for every set $B \subseteq V \setminus V_0$, by the
end of the stage $|N_G(B, V_0)| \geq |B|r/t$ will hold. He does so
in the following way. In each of his moves in this stage, Max claims
$u_i v_{\lceil i r/t \rceil}$ where $i$ is the smallest positive
integer for which $u_i v_{\lceil i r/t \rceil}$ is free. As soon as
all edges of $\{u_i v_{\lceil i r/t \rceil} : 1 \leq i \leq t\}$ are
claimed, Max proceeds to Stage II.
\item [Stage II:] Let $H$ be a $k$-connected graph on $r$ vertices such
that $e(H)$ is minimal among all such graphs. Max ensures that $V_0$
will contain a copy of $H$ and then proceeds to Stage III.
\item [Stage III:] Throughout this stage, Max follows the trivial strategy.
\end{description}

Our first goal is to prove that Max can indeed follow the proposed
strategy. This is obvious for Stage III. For Stages I and II this
follows since at most $2 (t + \lceil k r/2\rceil) < k n/2$ edges are
claimed by both players during these two stages, where the
inequality follows by the definition of $r$ since $k \geq 5$. Since
there is no $k$-connected graph on $n$ vertices and strictly less
than $kn/2$ edges, it follows that Max can claim any free edge
throughout Stages I and II.

At the end of the game, $G$ is saturated with respect to
$k$-connectivity, that is, $G$ is not $k$-connected but $G + uv$ is
$k$-connected for every $u,v \in V(G)$ such that $uv \notin E(G)$.
Let $S \subseteq V(G)$ be a cut set of $G$ of size $k-1$. It follows
by Stage II of Max's strategy that $V_0 \setminus S$ is contained in
one connected component of $G \setminus S$; let $A$ denote this
component and let $B = V(G) \setminus (A \cup S)$. We claim that
$|B| \leq t(k-1)/r$. Indeed, suppose for a contradiction that $|B| >
t(k-1)/r$. It follows by Stage I of Max's strategy that $|N_G(B,
V_0)| \geq \lceil|B| r/t \rceil\geq k$. Since $|S| < k$ it follows
that $N_G(B, V_0 \setminus S) \neq \emptyset$ contrary to $S$ being
a cut set. Since, as noted above, $G$ is saturated with respect to
$k$-connectivity, it follows that $xy \notin E(G)$ if and only if $x
\in A$ and $y \in B$ (or vice versa). Hence $e(G) = \binom{n}{2} -
|A||B| \geq \binom{n}{2} - \frac{t(k-1)}{r} \left(n-k+1 -
\frac{t(k-1)}{r}\right) = \binom{n}{2} - (k-1)(2k-4)[n -
(k-1)(2k-3)]$ as claimed. {\hfill $\Box$ \medskip\\}

\section{Colorability games} \label{sec::color}

In this section we study colorability games, that is, saturation
games in which both players are required to keep the chromatic
number of the graph below a certain threshold.

\textbf{Proof of Theorem~\ref{th::3colorability}} Since the proof is
quite technical, even though it is based on a very simple idea, we begin
by briefly describing this idea. Regardless of Mini's strategy, at the
end of the game $G$ will be a complete 3-partite graph. Since she would
like to minimize the number of its edges, she should try to unbalance
its parts. She will do so by making sure one part is small, namely, its
size is at most $\lceil n/4 \rceil$. This will be achieved by connecting (by her and
Max's edges) an arbitrary vertex $v_0$ to roughly $3n/4$ vertices. In order to eventually prove Mini can
achieve this, we will show that for every vertex $x$ Mini cannot connect to $v_0$,
Max must have ``used up'' at least 3 of his moves.

We first introduce
some notation and terminology that will be used throughout this
proof. Let $v_0 \in V(G)$ be an arbitrary vertex. This vertex
determines the following partition $V(G) = T_G \cup M_G \cup B_G$:
$T_G$ consists of all vertices which receive the same color as $v_0$
in \emph{any} proper 3-coloring of $G$, $M_G = N_G(T_G)$ and $B_G =
V(G) \setminus (T_G \cup M_G)$. In particular, $T_G = \{v_0\}$,
$M_G = \emptyset$ and $B_G = V(G) \setminus \{v_0\}$ hold
before the game starts. The vertices of $T_G$ are called \emph{top}
vertices, the vertices of $M_G$ are called \emph{middle} vertices and
the vertices of $B_G$ are called \emph{bottom} vertices.
For any vertex $u \in V(G)$, let $\Gamma_G(u)$ denote the connected
component of $G[\{u\} \cup M_G]$ which contains $u$.
A connected component of $G[M_G]$ is called a
\emph{middle-component}. Note that at any point during the game,
every middle-component is 2-colorable. If $u \in B_G$, $v \in M_G$, $uv
\in E(G)$ and $C$ is the middle-component containing $v$, then $u$
is said to be \emph{attached} to $C$. When there is no risk of confusion,
we omit the subscript $G$ from the above notation.

Note that if
$x$ is a top (respectively middle) vertex, then $x$ remains a top
(respectively middle) vertex throughout the game. On the other hand,
if $x \in B$ and some player claims $xy$ for some top vertex $y$,
then $x$ is \emph{moved} to the middle, that is, $x$ becomes a middle
vertex. Moreover, if $x \in B$ and some player claims $xy$ for some
vertex $y \in M \cup B$, then either $x$ remains in $B$ or it is
moved to the top.

During the game, Mini will want $G$ to satisfy certain structural properties.
In order to describe these we introduce some more definitions,
starting with the following two properties of a given graph $G$ on $n$ vertices with
the partition $V(G) = T \cup M \cup B$:

\begin{description}
\item [(P1)] $E(G[B]) = \emptyset$;
\item [(P2)] Every middle-component of $G$ has at most one attached vertex.
\end{description}

Next, we define the set of bad edges with respect to $G$ as follows:
$$
BAD_G := \{uv \in \binom{V(G)}{2}: \exists x,y \in B, \textrm{ } x \neq y, \textrm{ } u \in V(\Gamma(x)), \textrm{ and } v \in V(\Gamma(y))\} \,.
$$

Note that in the definition of a bad edge, it is possible that $x=u$ or $y=v$.

Finally, we say that a vertex $x \in B$ is \emph{good} if
every edge in $G$ with exactly one endpoint in $V(\Gamma(x))$ (if
such an edge exists) has its other endpoint in $T$.

\begin{observation}\label{obs::equivalent}
Let $G$ be a graph with the partition $V(G) = T \cup M \cup B$ as
described above. The following conditions are equivalent:
\begin{enumerate}
\item $G$ satisfies properties (P1) and (P2).
\item $BAD_G \cap E(G) = \emptyset$.
\item Every vertex in $B$ is good.
\end{enumerate}
\end{observation}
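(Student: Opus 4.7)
The plan is to prove the equivalences by the cyclic chain $(1) \Rightarrow (2) \Rightarrow (3) \Rightarrow (1)$. The key structural observation driving everything is that, for any $x \in B$, the set $V(\Gamma(x))$ is precisely $\{x\}$ together with the union of all middle-components to which $x$ is attached; in particular $V(\Gamma(x)) \subseteq \{x\} \cup M$, and two distinct $x, y \in B$ have disjoint $\Gamma$'s if and only if no middle-component of $G$ is attached to both of them. Everything below is just a careful unfolding of this, together with the definitions of attachment and of $BAD_G$.

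For $(1) \Rightarrow (2)$, I would take a putative edge $uv \in BAD_G \cap E(G)$ witnessed by distinct $x, y \in B$ with $u \in V(\Gamma(x))$ and $v \in V(\Gamma(y))$, and split on where $u$ and $v$ lie. If $u, v \in B$ then necessarily $u = x$ and $v = y$, giving a $B$-$B$ edge that violates (P1). If $u \in B$ and $v \in M$ (the symmetric case is identical), then $u = x$, so the edge $uv$ attaches $x$ to the middle-component containing $v$, while that same component is also attached to $y$ because $v \in V(\Gamma(y)) \setminus \{y\}$, violating (P2). If $u, v \in M$, then $uv \in E(G[M])$ places $u$ and $v$ in a common middle-component, and this component is attached simultaneously to $x$ and to $y$, again violating (P2).

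For $(2) \Rightarrow (3)$, fix $x \in B$ and consider an edge $uv \in E(G)$ with exactly one endpoint, say $u$, in $V(\Gamma(x))$; my aim is to rule out $v \in M \cup B$. If $v \in M$, then either $u \in M$, in which case $u$ and $v$ belong to the same middle-component and hence $v \in V(\Gamma(x))$, contradicting the ``exactly one'' assumption; or $u = x$, in which case the edge $xv$ attaches $x$ to the middle-component of $v$, again forcing $v \in V(\Gamma(x))$. If $v \in B$, then $v \neq x$ (since $x \in V(\Gamma(x))$ while $v \notin V(\Gamma(x))$), so taking $y := v$ produces two distinct bottom vertices with $u \in V(\Gamma(x))$ and $v \in V(\Gamma(y))$, making $uv \in BAD_G \cap E(G)$ and contradicting $(2)$. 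Hence $v \in T$, so $x$ is good.

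For $(3) \Rightarrow (1)$, I verify (P1) and (P2) separately, each time deriving a contradiction from goodness of a suitable $x$. An edge $xy \in E(G[B])$ would have $x \in V(\Gamma(x))$ and $y \notin V(\Gamma(x))$ (because $y \in B \setminus \{x\}$ and $V(\Gamma(x)) \subseteq \{x\} \cup M$), while $y \in B \not\subseteq T$, contradicting the goodness of $x$; this yields (P1). If some middle-component $C$ had two distinct attached bottom vertices $x, y$, then picking any edge $yw$ with $w \in C$ gives $w \in C \subseteq V(\Gamma(x))$ and $y \notin V(\Gamma(x))$, yet $y \in B \not\subseteq T$, again contradicting goodness of $x$; this yields (P2). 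I do not anticipate a genuine obstacle: the whole statement is a definition-chasing exercise, and the only point that requires a moment of care is matching the ``exactly one endpoint'' clause in the definition of good with the ``two distinct bottom vertices'' clause in the definition of a bad edge, which the case analysis above handles cleanly.
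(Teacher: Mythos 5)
Your proof is correct. The paper states this as an \emph{Observation} and gives no proof at all, so there is nothing to compare it against directly; your role here is simply to verify the claim, and you do so successfully. The structural fact you isolate up front---that for $x \in B$ one has $V(\Gamma(x)) = \{x\} \cup \bigcup\{C : x \text{ is attached to } C\} \subseteq \{x\} \cup M$, so that $V(\Gamma(x)) \cap B = \{x\}$---is exactly the right lever, and the cyclic chain $(1) \Rightarrow (2) \Rightarrow (3) \Rightarrow (1)$ with the three-way case split on whether the relevant endpoint lies in $B$, $M$, or $T$ cleanly dispatches each implication. Each case is handled correctly: in $(1) \Rightarrow (2)$ the edge $uv$ necessarily has both endpoints in $B \cup M$ because $V(\Gamma(x)), V(\Gamma(y)) \subseteq B \cup M$, so your three subcases are exhaustive; in $(2) \Rightarrow (3)$ the identification $y := v$ and the fact that $v = y \in V(\Gamma(y))$ is the step that ties the ``exactly one endpoint'' clause to the ``two distinct bottom witnesses'' clause; and in $(3) \Rightarrow (1)$ the needed $y \notin V(\Gamma(x))$ follows immediately from $V(\Gamma(x)) \cap B = \{x\}$. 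No gaps.
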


We say that the graph $G$ is \emph{good} if it satisfies
conditions 1--3 of Observation~\ref{obs::equivalent}. Given this
definition, we make another observation.

\begin{observation}\label{obs::notBadIsGood}
Let $G$ be a good graph and let $G' = G \cup \{e\}$ for some $e
\notin E(G)$. $G'$ is a good graph if and only if $e \notin BAD_G$.
\end{observation}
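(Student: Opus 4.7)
The plan is to analyze both directions by cases based on where the endpoints of $e = uv$ lie in the partition $V(G) = T_G \cup M_G \cup B_G$, exploiting interchangeably the three equivalent characterizations of goodness. As preparation I would record two monotonicity facts: since every proper $3$-coloring of $G'$ is also a proper $3$-coloring of $G$, we have $T_G \subseteq T_{G'}$; and since any vertex of $M_G$ is adjacent to a top vertex (hence forbidden from $v_0$'s color in $G'$ too), it cannot move into $T_{G'}$, so $M_G \subseteq M_{G'}$ and consequently $B_{G'} \subseteq B_G$. I would also note that in a good graph, the sets $\{\Gamma_G(x) : x \in B_G\}$ are pairwise disjoint components of $G[M_G \cup B_G]$, so in any proper $3$-coloring of $G$ the $2$-colorings of distinct middle-components can be chosen independently. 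This independence will be the main tool.

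For the backward direction ($e \in BAD_G \Rightarrow G'$ is not good), I would fix witnesses $x \neq y \in B_G$ with $u \in \Gamma_G(x)$ and $v \in \Gamma_G(y)$, and the key step is to show $x, y \in B_{G'}$. Because $\Gamma_G(x)$ and $\Gamma_G(y)$ lie in distinct components of $G[M_G \cup B_G]$, I can produce a proper $3$-coloring of $G$ respecting the new constraint $u \ne v$ in which $x$ avoids $v_0$'s color, certifying $x \notin T_{G'}$; and since all of $x$'s neighbors in $G'$ lie in $M_G \cup B_G$ (which is disjoint from $T_{G'}$), also $x \notin M_{G'}$. The symmetric argument gives $y \in B_{G'}$. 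With $x, y$ both bottom in $G'$, a structural violation follows: if $u = x$ and $v = y$, then $e \in E(G'[B_{G'}])$ violates (P1); if exactly one of $u, v$ is in $M_G$, the middle-component containing that endpoint gains a second attached bottom in $G'$, violating (P2); and if $u, v \in M_G$ lie in distinct middle-components, those components merge in $G'$ into a single middle-component attached to both $x$ and $y$, again violating (P2).

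For the forward direction ($e \notin BAD_G \Rightarrow G'$ is good), the case $u, v \in B_G$ is impossible since it would give $x = u, y = v$ as witnesses of badness. If an endpoint lies in $T_G$, a short case split on the location of the other endpoint shows the structure is preserved: a $T_G$-$T_G$ edge renders $G'$ non-$3$-colorable and the observation holds vacuously with $B_{G'} = \emptyset$; a $T_G$-$M_G$ edge leaves the partition and middle-components untouched; and a $T_G$-$B_G$ edge moves that bottom to $M_{G'}$ and, by (P2), merges only middle-components that had it as their sole attached bottom. In the remaining subcases, where $e$ lies within $M_G \cup B_G$, the hypothesis $e \notin BAD_G$ is exactly what is needed so that the merged or newly attached middle-component still has at most one attached bottom, preserving (P2); (P1) is trivially maintained since no new bottom-to-bottom edge of $G'$ appears. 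The main obstacle is the backward direction, specifically the coloring-extension step establishing $x, y \in B_{G'}$: it requires coupling the independent $2$-colorings of $\Gamma_G(x)$ and $\Gamma_G(y)$ so as to satisfy the new edge constraint while still allowing $x$ (respectively $y$) to avoid $v_0$'s color.
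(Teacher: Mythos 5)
The paper states this as an Observation with no proof supplied, so there is no official argument to compare against; I evaluate your proposal on its own merits.

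Your plan is fundamentally sound and uses exactly the right machinery. The preliminary monotonicity facts ($T_G \subseteq T_{G'}$, $M_G \subseteq M_{G'}$, hence $B_{G'} \subseteq B_G$) are correct, as is the key structural observation that in a good graph the sets $\Gamma_G(x)$ for $x \in B_G$, together with the unattached middle-components, are the connected components of $G[M_G \cup B_G]$, so their $2$-colorings can be flipped independently. Your backward direction in effect re-derives the content of the paper's later Claim~\ref{c::BadEdgeKeepsB} (that a bad edge leaves $B$ unchanged) via this flipping argument, and the ensuing case split on whether the endpoints of the bad edge are bottom or middle vertices correctly exhibits a violation of (P1) or (P2). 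That direction is complete.

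The forward direction, however, is not tracked carefully enough where it matters. After a non-bad $M$--$B$ or $M$--$M$ edge whose two endpoints lie in a single $\Gamma_G(x)$, the partition does \emph{not} always stay put: if $u$ and $v$ lie on the same side of the bipartition of $\Gamma_G(x)$, then $\Gamma_{G'}(x)$ acquires an odd cycle through $x$, and $x$ is promoted into $T_{G'}$. A minimal instance: $T_G = \{v_0\}$, $M_G = \{m_1, m_2\}$ with $m_1 m_2 \in E(G)$ and both $m_i$ adjacent to $v_0$, $B_G = \{x\}$ with the single attaching edge $x m_1$; then $\Gamma_G(x)$ is the path $x m_1 m_2$, the edge $e = x m_2$ is not bad (since $|B_G| = 1$), but $\Gamma_{G'}(x)$ is a triangle, so $x \in T_{G'}$ and $B_{G'} = \emptyset$. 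Your phrasing ``the merged or newly attached middle-component still has at most one attached bottom'' tacitly assumes $B_{G'} = B_G$ and that the attachment structure is merely extended; in this scenario neither holds, and the phrase ``newly attached'' is applied to a vertex that has left $B$. The conclusion of the Observation is still true here --- once $x$ leaves $B$ the middle-components it was attached to simply become unattached, so (P2) only gets easier to satisfy --- but your argument as written does not say this. To close the gap, split the within-$\Gamma_G(x)$ case into the sub-case where $\Gamma_{G'}(x)$ stays bipartite (partition unchanged, and $e \notin BAD_G$ directly gives (P2)) and the sub-case where it does not (then the flipping construction applied to $G'[M_G \cup (B_G \setminus \{x\})]$ shows $T_{G'} = T_G \cup \{x\}$, $M_{G'} = M_G$, $B_{G'} = B_G \setminus \{x\}$, and (P1)--(P2) hold because the affected middle-components lose their sole attached vertex rather than gain one).
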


The last definition we need is that of an almost good graph. A graph
$G$ is said to be \emph{almost good} if $G$ is good, or there exists
an edge $e \in E(G)$ such that $G \setminus \{e\}$ is a good graph
(the edge $e$ is not necessarily unique).

We now state and prove several claims that will be very useful in
the remainder of the proof. In all of these claims we assume that at
all times the graphs in question are 3-colorable.

\begin{claim}\label{c::topClassification}
Consider a graph $G$ with the corresponding partition $V(G) = T \cup
M \cup B$, and let $x \neq v_0$ be some vertex. The following hold:
\begin{enumerate} [(a)]
\item If $\Gamma(x)$ is not 2-colorable, then $x\in T$.
\item If $G$ satisfies property (P1) and $\Gamma(x)$ is 2-colorable, then $x\not\in T$.
\end{enumerate}
\end{claim}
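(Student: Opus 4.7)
The approach rests on one elementary structural observation: in any proper $3$-coloring $c$ of $G$ with $c(v_0)=1$, every vertex of $T$ is colored $1$ by the definition of $T$, so every vertex of $M = N(T)$ receives a color from $\{2,3\}$; in particular $G[M]$ is bipartite. Moreover, since $B = V(G) \setminus (T \cup M)$ is disjoint from $N(T)$, no vertex of $B$ has a neighbor in $T$. I will use both facts throughout.

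For part (a), I would argue by contraposition: assume $x \neq v_0$ and $x \notin T$, and derive that $\Gamma(x)$ is $2$-colorable. There are two cases. If $x \in M$, then $\Gamma(x)$ is simply the middle-component containing $x$, and restricting any proper $3$-coloring of $G$ with $c(v_0)=1$ to $\Gamma(x)$ already produces a proper $2$-coloring using only the palette $\{2,3\}$. If instead $x \in B$, then the definition of $T$ yields a proper $3$-coloring $c$ with $c(v_0)=1$ and $c(x)\in\{2,3\}$; restricting this $c$ to $\Gamma(x) = \{x\} \cup (\Gamma(x) \cap M)$ again gives a $2$-coloring of $\Gamma(x)$ using only $\{2,3\}$.

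For part (b), the plan is constructive: I will exhibit a proper $3$-coloring $c'$ of $G$ with $c'(v_0) = 1$ and $c'(x) \neq 1$, which by the definition of $T$ immediately forces $x \notin T$. Let $\phi : \Gamma(x) \to \{2,3\}$ be a proper $2$-coloring with $\phi(x) = 2$, whose existence is the hypothesis. Since $G[M]$ is bipartite, so is $G[M \setminus \Gamma(x)]$, and I fix an arbitrary proper $2$-coloring of the latter using the palette $\{2,3\}$. I then define $c'(t) = 1$ for every $t \in T \setminus \{x\}$, $c'(v) = \phi(v)$ for every $v \in \Gamma(x)$, $c'(v)$ equal to its chosen color for every $v \in M \setminus \Gamma(x)$, and finally $c'(b) = 1$ for every $b \in B \setminus \{x\}$.

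The main step is to verify that $c'$ is proper. Since $c'$ sends $T \setminus \{x\}$ and $B \setminus \{x\}$ into $\{1\}$ while sending $M$ into $\{2,3\}$, and since $\phi$ and the chosen $2$-coloring of $M \setminus \Gamma(x)$ are themselves proper, almost all edge types are handled simply by the two palettes being disjoint. Edges inside $T$ do not exist, because two adjacent $T$-vertices would be forced to the same color in every $3$-coloring, contradicting the $3$-colorability of $G$; edges between $T$ and $B$ do not exist because $B \cap N(T) = \emptyset$; and there are no edges between $\Gamma(x) \cap M$ and $M \setminus \Gamma(x)$, since such an edge would place them in the same middle-component. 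The only genuine obstacle is to color every vertex of $B$ uniformly with $1$ without creating a monochromatic edge inside $B$, and this is exactly what property (P1) guarantees. Hence $c'$ is a proper $3$-coloring with $c'(v_0) = 1 \neq 2 = c'(x)$, so $x \notin T$.
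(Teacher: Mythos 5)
Your proof is correct and follows essentially the same approach as the paper: for (a) both arguments hinge on the fact that no vertex of $M$ can share $v_0$'s color (you phrase it as a contrapositive, the paper argues directly, but the content is identical), and for (b) both construct a proper $3$-coloring in which $T \cup B$ minus $x$ receives one color and $\{x\} \cup M$ receives the other two, using (P1) to make $T \cup B$ independent.
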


\begin{proof}
For (a), since $\Gamma(x)$ is not 2-colorable, in every proper
3-coloring $c$ of $G$ there exists a vertex $v \in V(\Gamma(x))$ such
that $c(v) = c(v_0)$. Since $V(\Gamma(x)) \setminus \{x\} \subseteq M$
and no vertex in $M$ can receive the same color as $v_0$, it follows
that $v=x$ and thus $x \in T$.

For (b), we show that there exists a proper 3-coloring of $G$ which
does not assign the same color to $x$ and $v_0$. Indeed, since by definition
every edge with at least one endpoint in $T$ must have its
other endpoint in $M$ and since $B$ is an
independent set by assumption, it follows that $T \cup B$ is an
independent set. Since $\Gamma(x)$ is 2-colorable, and so is every
middle-component, it follows that $G[\{x\} \cup M]$ is 2-colorable.
Let $c$ be some proper coloring of $G[\{x\} \cup M]$ with colors 1
and 2. Extend $c$ to a coloring of $G$ by coloring each vertex in
$T\cup B \setminus \{x\}$ with the color 3. This is a proper
3-coloring of $G$ which assigns $x$ and $v_0$ distinct colors. We
conclude that $x \notin T$.
\end{proof}

\begin{claim} \label{c::BadEdgeKeepsB}
If $G$ is a good graph, $e \in BAD_G$ and $G' = G\cup \{e\}$, then
$B_{G'} = B_G$.
\end{claim}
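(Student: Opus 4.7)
The plan is to prove $B_{G'} = B_G$ by establishing the two inclusions separately. The inclusion $B_{G'} \subseteq B_G$ is straightforward: every proper 3-coloring of $G'$ restricts to one of $G$, so $T_G \subseteq T_{G'}$; combined with $E(G) \subseteq E(G')$, this gives $M_G = N_G(T_G) \subseteq N_{G'}(T_{G'}) = M_{G'}$, whence $B_{G'} \subseteq B_G$.

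For the reverse inclusion, fix $z \in B_G$; we must show $z \notin T_{G'}$ and $z \notin M_{G'}$. A key auxiliary fact is $M_G \cap T_{G'} = \emptyset$: any $m \in M_G$ is adjacent in $G'$ to a vertex of $T_G \subseteq T_{G'}$, and the latter is forced to receive $v_0$'s color, so $m$ cannot be forced to take that color too. Since $z \in B_G$ and $G$ satisfies (P1), $N_G(z) \subseteq M_G$, which is disjoint from $T_{G'}$ by the above. The only possible new neighbor of $z$ in $G'$ arises when $z \in \{u,v\}$, in which case the other endpoint $w$ of $e$ lies in $V(\Gamma_G(x)) \cup V(\Gamma_G(y))$: if $w$ is a middle vertex the auxiliary fact applies, while if $w \in \{x,y\}$ we invoke the next step, which shows that every bottom vertex (and in particular $w$) is still outside $T_{G'}$.

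To establish $z \notin T_{G'}$, I would exhibit a proper 3-coloring $c$ of $G'$ with $c(z) \neq c(v_0)$. By Claim~\ref{c::topClassification}(b) applied to $G$ (which satisfies (P1)), each of $\Gamma_G(x)$, $\Gamma_G(y)$, and $\Gamma_G(z)$ is 2-colorable, and (P2) guarantees that these subgraphs are pairwise vertex-disjoint whenever they correspond to distinct bottom vertices. Color every top vertex and every bottom vertex other than $z$ by $c(v_0)$, choose a proper 2-coloring of $\Gamma_G(z)$ assigning $z$ a color different from $c(v_0)$, and color each remaining middle-component by an arbitrary proper 2-coloring, exploiting the freedom to independently flip the 2-colorings of the components containing $u$ and $v$ so that the new edge $e = uv$ becomes properly colored. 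The main obstacle is the subcase $z \in \{x,y\}$, where $\Gamma_G(z)$ coincides with $\Gamma_G(x)$ or $\Gamma_G(y)$ and fixing $c(z) \neq c(v_0)$ already determines the color of one endpoint of $e$; however the other endpoint still lies in a $\Gamma_G$ whose bottom vertex is colored $c(v_0)$, so the 2-coloring of its middle-component containing that endpoint may be flipped freely to avoid a conflict. Combining this with the auxiliary fact yields $z \in B_{G'}$, completing the proof.
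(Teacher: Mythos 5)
Your overall plan is sound (show $B_{G'} \subseteq B_G$ via the stability of $T$ and $M$, then show $B_G \cap T_{G'} = \emptyset$ by exhibiting a proper $3$-coloring of $G'$ for each $z \in B_G$, and finally deduce $B_G \cap M_{G'} = \emptyset$), but the coloring construction at the heart of the argument has a gap. Recall that a bad edge $e=uv$ with $u \in V(\Gamma_G(x))$, $v \in V(\Gamma_G(y))$ may well have $u=x$ and $v=y$, i.e.\ both endpoints of $e$ can be \emph{bottom} vertices (the paper explicitly warns of this after defining $BAD_G$). Now take any $z \in B_G \setminus \{x,y\}$ (there is no reason such a $z$ does not exist). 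In your coloring you assign $c(v_0)$ to every bottom vertex other than $z$, so $c(u) = c(x) = c(v_0) = c(y) = c(v)$, and the new edge $e$ is monochromatic. The ``flip freedom'' you invoke does not rescue you here: $u$ and $v$ are not contained in any middle-component, so there is nothing to flip. You isolate $z \in \{x,y\}$ as the ``main obstacle,'' but that subcase is in fact fine; the genuinely problematic one is $z \notin \{x,y\}$ together with $u=x$, $v=y$, and it is left unaddressed. A fix is possible (e.g.\ also give $x$ a non-$c(v_0)$ color and re-$2$-color the middle-components attached to $x$), but as written the construction does not yield a proper coloring.

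The paper avoids this case analysis entirely by taking a coarser coloring: color all of $T_G$ by $1$ and all of $M_G \cup B_G$ by $\{2,3\}$. This is possible because $G[M_G \cup B_G]$ is bipartite (by goodness of $G$), and since $e$ joins the two \emph{disjoint} pieces $V(\Gamma_G(x))$ and $V(\Gamma_G(y))$, which lie in different connected components of $G[M_G \cup B_G]$, adding $e$ keeps $G'[M_G \cup B_G]$ bipartite. This single coloring shows at once that no vertex of $M_G \cup B_G$ lands in $T_{G'}$, hence $T_{G'} = T_G$, from which $M_{G'} = M_G$ and $B_{G'} = B_G$ follow since $e$ avoids $T_G$. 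You may wish to adopt that route; it both eliminates the problematic subcase and removes the per-vertex quantification over $z$. Two smaller points: the $2$-colorability of $\Gamma_G(x)$ for $x \in B_G$ follows from the \emph{contrapositive of Claim~\ref{c::topClassification}(a)}, not from part~(b) as you cite; and your first-paragraph inclusion $M_G \subseteq M_{G'}$ silently uses the fact that no vertex of $M_G$ enters $T_{G'}$, which you only justify later --- reorder so that the auxiliary fact precedes its first use.
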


\begin{proof}
As $G$ is a good graph, it follows by the contrapositive of
Claim~\ref{c::topClassification}(a) and by the fact that every
middle-component is always 2-colorable, that $G[M_G \cup B_G]$ is
also 2-colorable. Since, by the definition of a bad edge, $e$ connects
two different components of $G[M_G \cup B_G]$, it follows that
$G'[M_G \cup B_G]$ is 2-colorable as well. Thus, there exists a
proper coloring of $G'$ which assigns all vertices in $T_G$ the
color 1, and all the vertices in $M_G \cup B_G$ the colors 2 and 3.
It follows that $T_{G'} \subseteq T_G$, and therefore $T_{G'} = T_G$
since vertices are never moved from the top.

Now, since $T_{G'} = T_G$, and since $e \cap T_G = \emptyset$, it
follows by definition of the middle that $M_{G'} = M_G$, and
therefore we conclude that $B_{G'} = B_G$.
\end{proof}

\begin{claim}\label{c::deg2ToComponent}
Let $G$ be a good graph and let $G' = G \cup \{e\}$ for some $e
\notin E(G)$. If there exists a vertex $x$ such that $x \in B_G$ and
$x \in T_{G'}$, then there exists a middle-component $C \subseteq
M_{G'}$ such that $d_{G'}(x, C) \geq 2$.
\end{claim}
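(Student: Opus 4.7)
The plan is to turn the hypothesis $x \in T_{G'}$ into a parity obstruction and read off the required two edges from an odd cycle through $x$. Since $x \in B_G$ we have $x \neq v_0$, so applying Claim~\ref{c::topClassification}(a) to $G'$ yields immediately that $\Gamma_{G'}(x)$ is not 2-colorable; equivalently, $\Gamma_{G'}(x)$ contains an odd cycle.

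The crucial structural step is the observation that $\Gamma_{G'}(x) \setminus \{x\}$ is precisely a disjoint union of those middle-components of $G'$ that contain at least one neighbor of $x$. This is a direct unpacking of the definition of $\Gamma_{G'}(x)$ as the connected component of $G'[\{x\}\cup M_{G'}]$ containing $x$: after removing $x$ what remains is the induced subgraph of $G'[M_{G'}]$ on all vertices of $M_{G'}$ reachable from $x$ via a single edge, which is exactly the union of the middle-components of $G'$ meeting $N_{G'}(x)$. Because every middle-component is 2-colorable (being bipartite), the subgraph $\Gamma_{G'}(x)\setminus\{x\}$ is bipartite, and in particular contains no odd cycle.

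It follows that every odd cycle in $\Gamma_{G'}(x)$ must pass through $x$. Fixing one such cycle and letting $u_1,u_2$ be the two neighbors of $x$ that lie on it, the remainder of the cycle is a $u_1u_2$-path in $\Gamma_{G'}(x)\setminus\{x\}$, which forces $u_1$ and $u_2$ to lie in a common connected component of that graph. By the structural observation, this common component is a single middle-component $C$ of $G'$, so $u_1,u_2 \in C \cap N_{G'}(x)$ and therefore $d_{G'}(x,C)\geq 2$, as required.

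I do not anticipate a serious obstacle: the only care needed is in justifying that $\Gamma_{G'}(x)\setminus\{x\}$ really is a disjoint union of middle-components of $G'$ (rather than, say, of middle-components of $G$), since the partition $V=T\cup M\cup B$ may shift when $e$ is added; once this is handled cleanly the bipartiteness of each middle-component makes the odd-cycle argument essentially automatic.
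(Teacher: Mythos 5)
Your odd-cycle argument in the second half is sound and matches the paper's, but the opening step contains a genuine logical gap. You invoke Claim~\ref{c::topClassification}(a) to pass from $x\in T_{G'}$ to ``$\Gamma_{G'}(x)$ is not 2-colorable,'' but part~(a) states only the forward implication (not 2-colorable $\Rightarrow$ in $T$), not the converse you need. The implication you want is the contrapositive of part~(b), and part~(b) carries the additional hypothesis that the graph satisfies property~(P1). So before you may conclude anything about $\Gamma_{G'}(x)$, you must first establish that $G'$ satisfies~(P1) -- and this is precisely the piece the paper handles first and your write-up omits. The paper argues: since adding $e$ moves $x$ from $B_G$ to $T_{G'}$, Claim~\ref{c::BadEdgeKeepsB} forces $e\notin BAD_G$ (a bad edge leaves $B$ unchanged), and then Observation~\ref{obs::notBadIsGood} shows $G'$ is good, hence satisfies~(P1). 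Without that step the appeal to (P1) is unjustified: you do flag a concern about the partition shifting when $e$ is added, but the real danger is that $G'$ might contain an edge inside $B_{G'}$, in which case the contrapositive of~(b) simply does not apply.

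Once this is repaired, the remainder of your argument -- every odd cycle in $\Gamma_{G'}(x)$ must pass through $x$ because $\Gamma_{G'}(x)\setminus\{x\}\subseteq M_{G'}$ is bipartite, and the two neighbors of $x$ on such a cycle are joined by a path inside $M_{G'}$ and hence lie in one middle-component -- is exactly the paper's reasoning, merely phrased via the slightly more elaborate observation that $\Gamma_{G'}(x)\setminus\{x\}$ is a disjoint union of middle-components of $G'$.
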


\begin{proof}
Since the addition of $e$ to $G$ moves $x$ from the bottom to the top,
Claim~\ref{c::BadEdgeKeepsB} implies that $e \notin BAD_G$.
Therefore, by Observation~\ref{obs::notBadIsGood}, $G'$ is a good
graph. By Claim~\ref{c::topClassification}(b),
$\Gamma_{G'}(x)$ is not 2-colorable, and thus contains an odd cycle.
Since $\Gamma_{G'}(x) \setminus \{x\} \subseteq M_{G'}$ is
2-colorable, this cycle must include $x$. The two neighbors of $x$
in the cycle belong to the same middle-component $C \subseteq
M_{G'}$, as claimed.
\end{proof}

\begin{claim}\label{c::gammaChange}
Let $G$ be an almost good graph with the partition $V(G) = T_G \cup
M_G \cup B_G$, and let $G_1 := G \cup \{ab\}$ for some $ab \notin
E(G)$ such that $a \in T_G$. Assume that there exists a vertex $x
\neq a$ such that $\Gamma_{G_1}(x) \neq \Gamma_G(x)$. Then $b \in
B_G$, $b \neq x$, and there exists an edge $ub \in E(G)$ such that
$u \in V(\Gamma_G(x))$.
\end{claim}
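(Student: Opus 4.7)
The plan is to establish the three conclusions $b\in B_G$, $b\neq x$, and the existence of $ub\in E(G)$ with $u\in V(\Gamma_G(x))$ in that order, by tracking carefully how the partition $(T,M,B)$ and the induced subgraph defining $\Gamma$ change when the edge $ab$ is added.

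First I would rule out $b\in T_G$ and $b\in M_G$. If $b\in T_G$, then every proper 3-coloring of $G$ assigns $c(a)=c(b)=c(v_0)$, so $G_1=G\cup\{ab\}$ is not 3-colorable, contradicting the blanket assumption for this sequence of claims. If $b\in M_G$, then $b$ already has a neighbor in $T_G$, so every 3-coloring $c$ of $G$ satisfies $c(b)\neq c(v_0)=c(a)$; hence every 3-coloring of $G$ is also a 3-coloring of $G_1$, yielding $T_{G_1}=T_G$ and $M_{G_1}=M_G$. Since $a\in T_G$ and $x\neq a$, the new edge $ab$ does not lie inside $G_1[\{x\}\cup M_{G_1}]=G[\{x\}\cup M_G]$, and therefore $\Gamma_{G_1}(x)=\Gamma_G(x)$, contradicting the hypothesis. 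Thus $b\in B_G$.

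Next I would show $b\neq x$ and then extract the desired edge. The central structural observation is that because $a\in T_G$ never belongs to $\{x\}\cup M_{G_1}$, the new edge $ab$ is never present in $G_1[\{x\}\cup M_{G_1}]$. Combined with the fact that in a good graph every $B$-vertex has all its $G$-neighbors inside $M$, one verifies $M_{G_1}\subseteq M_G\cup\{b\}$ (any additional $B$-to-$T$ migration, as characterized by Claim~\ref{c::deg2ToComponent}, does not enlarge $M_{G_1}$ beyond this, since such a migrated vertex contributes only its old $G$-neighbors, which already lie in $M_G$). Therefore if $b=x$ then $\{x\}\cup M_{G_1}=\{x\}\cup M_G$ and every edge of $G_1[\{x\}\cup M_{G_1}]$ is an edge of $G$, forcing $\Gamma_{G_1}(x)=\Gamma_G(x)$; contradiction, so $b\neq x$. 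With $b\neq x$, the set $\{x\}\cup M_{G_1}$ differs from $\{x\}\cup M_G$ only by the added vertex $b$, and again the incident edges are precisely those of $G$; in order for $\Gamma_{G_1}(x)$ to differ from $\Gamma_G(x)$, the vertex $b$ must join the component of $x$, which requires a $G$-edge $ub$ with $u\in V(\Gamma_G(x))$.

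The main obstacle is that $G$ is only assumed to be \emph{almost} good, so an exceptional edge may violate P1 or P2 in $G$ itself and disturb the clean ``neighbors of $B$ live in $M$'' property used above. I plan to handle this using Claim~\ref{c::BadEdgeKeepsB}: if $G$ is not good, then $G=G'\cup\{e_0\}$ for a good graph $G'$ and a bad edge $e_0$, and that addition preserves $T,M,B$, so $G$ inherits its partition from the good graph $G'$. This lets me carry out the partition-tracking argument in $G'$ instead, while noting that the single ``extra'' edge $e_0$ does not affect which vertices lie in $\{x\}\cup M_{G_1}$ nor whether $a\in T_G$; hence the analysis of $G_1[\{x\}\cup M_{G_1}]$ is unchanged and the conclusions follow.
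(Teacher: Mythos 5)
Your overall strategy is genuinely different from the paper's: rather than locating the specific edge $uw$ that bridges $\Gamma_G(x)$ to new middle territory and analyzing it vertex by vertex (which is what the paper does), you try to pin down the entire partition of $G_1$ at once by showing $M_{G_1}\subseteq M_G\cup\{b\}$ and then reading all three conclusions off of that. This is a clean and attractive reduction, and the derivations of $b\in B_G$, $b\neq x$, and the $ub$-edge from the containment $M_{G_1}\subseteq M_G\cup\{b\}$ are all sound.

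The gap is in the justification of $M_{G_1}\subseteq M_G\cup\{b\}$ itself in the almost-good case. Your argument for good $G$ (that a $B$-to-$T$ migrant $y$ contributes only $N_G(y)\subseteq M_G$) is fine, but when $G$ is merely almost good the exceptional edge $e_0$ may be a $B$--$B$ edge, so a migrating bottom vertex could a priori drag a second bottom vertex into $M_{G_1}$. You propose to repair this by working in the good graph $G'=G\setminus\{e_0\}$ and then "noting" that re-adding $e_0$ changes nothing. That note is not automatic: the natural tool is Claim~\ref{c::BadEdgeKeepsB} applied to $G'\cup\{ab\}$ and $e_0$, but that claim requires $e_0\in BAD_{G'\cup\{ab\}}$, and $e_0$ can cease to be a bad edge once $b$ has moved into $M$ (precisely when $b$ is the unique bottom witness of $e_0$'s badness in $G'$; for instance take $T_{G'}=\{v_0\}$, $M_{G'}=\{m_1,m_2\}$, $b$ attached to $m_1$, $e_0=bm_2$). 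So this step needs an argument you have not supplied. The conclusion you want is in fact true, and the cleanest route is the coloring device the paper uses elsewhere: since $e_0$ is necessarily a bad edge of $G'$, the graph $G[M_G\cup B_G]=G'[M_{G'}\cup B_{G'}]\cup\{e_0\}$ is bipartite (a bad edge joins two different components of a bipartite graph), so $G$ admits a proper $3$-coloring giving $v_0$'s color to $T_G$ only, and this coloring extends to $G_1$ because $a\in T_G$ and $b\in B_G$; hence $T_{G_1}=T_G$ and $M_{G_1}=M_G\cup\{b\}$. With that lemma in place your proof goes through, but as written the almost-good reduction is an assertion rather than a proof.
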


\begin{proof}
Since $G \subseteq G_1$ and no vertex can move out of the middle, it
follows that $M_G \subseteq M_{G_1}$. Therefore $\Gamma_G(x)
\subseteq \Gamma_{G_1}(x)$ and so if $\Gamma_{G_1}(x) \neq
\Gamma_G(x)$, then there must exist an edge $uw \in E(G_1)$ such
that $u \in V(\Gamma_G(x))$ and $w \in M_{G_1} \setminus
V(\Gamma_G(x))$. Recall that $a \neq x$ by assumption. Moreover, $a
\neq u$ as $a \notin M_G$. Since no vertex can move out of the top,
it follows that $a \in T_{G_1}$ and thus $a \neq w$. Therefore, $uw
\in E(G)$. However, $w \notin V(\Gamma_G(x))$ and this can only
happen if $w \notin M_G$. Since $w \in M_{G_1}$ we conclude that $w
\in B_G$. We will now show that $b=w$, which will complete the
proof, as $b \notin V(\Gamma_G(x))$ implies $b \neq x$.

Suppose for a contradiction that $b \neq w$. Since $w \in M_{G_1}$
there exists a vertex $z \in T_{G_1}$ such that $wz \in E(G_1)$. By
our assumption that $b \neq w$, and since $a \neq w$ as previously shown,
it follows that $wz \in E(G)$. Since $w \notin M_{G}$ it follows that $z
\notin T_G$ and therefore $z \in B_G$. Clearly the graph $G$ is not
good as the edge $wz$ violates property~(P1). However, since $G$ is
almost good, there exists an edge $e \in E(G)$ such that $G_0 := G
\setminus \{e\}$ is a good graph. Note that $w, z \in B_{G_0}$, and
therefore $e = wz$ as otherwise property~(P1) would have been violated
in $G_0$ as well.

Now, similarly to the proof of Claim~\ref{c::BadEdgeKeepsB}, there
exists a proper coloring $c$ of $G$ which assigns all vertices in
$T_G$ the color 1 and all the vertices in $M_G \cup B_G$ the colors
2 and 3. This coloring is also a proper coloring of $G_1$, since
$c(a) = 1$ and $c(b) \neq 1$. Since $z \in B_G$, it follows that
$c(z) \neq c(v_0)$ contrary to $z$ being an element of $T_{G_1}$.
We conclude that indeed $b=w$ and the proof is complete.
\end{proof}

The following result is an immediate consequence of Claim~\ref{c::gammaChange}.

\begin{corollary} \label{cor::NewGamma}
Under the assumptions of Claim~\ref{c::gammaChange}, and using the
same terminology, $\Gamma_{G_1}(x)$ is the subgraph of $G$ induced
on the vertex set $V(\Gamma_{G}(x)) \cup V(\Gamma_{G}(b))$.
\end{corollary}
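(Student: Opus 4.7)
The plan is to establish the vertex-set equality $V(\Gamma_{G_1}(x)) = W$, where $W := V(\Gamma_G(x)) \cup V(\Gamma_G(b))$, and then to observe that the sole new edge $ab \in E(G_1) \setminus E(G)$ has $a \in T_G$, disjoint from $W \subseteq \{x, b\} \cup M_G$ (using $a \neq x$), so that $G_1[W] = G[W]$; this yields $\Gamma_{G_1}(x) = G[W]$ as required.

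Claim~\ref{c::gammaChange} immediately supplies $b \in B_G$, $b \neq x$, and $u \in V(\Gamma_G(x))$ with $ub \in E(G)$. Since $a \in T_G \subseteq T_{G_1}$ and $ab \in E(G_1)$, the vertex $b$ now has a top-neighbor in $G_1$ and so lies in $M_{G_1}$. Both $\Gamma_G(x)$ and $\Gamma_G(b)$ are then subgraphs of $G_1[\{x\}\cup M_{G_1}]$, and the edge $ub$ merges them into a single connected piece containing $x$, giving $W \subseteq V(\Gamma_{G_1}(x))$.

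For the reverse inclusion I mimic the bookkeeping used inside the proof of Claim~\ref{c::gammaChange}. Given any $y \in V(\Gamma_{G_1}(x))$, take a shortest path in $G_1[\{x\}\cup M_{G_1}]$ from $x$ to $y$; if this path never leaves $V(\Gamma_G(x))$, then $y \in V(\Gamma_G(x)) \subseteq W$ and we are done. Otherwise, let $u'w'$ be the first edge with $u' \in V(\Gamma_G(x))$ and $w' \in M_{G_1}\setminus V(\Gamma_G(x))$. The argument inside Claim~\ref{c::gammaChange}'s proof (which produces exactly one such escape vertex, ruling out $w' \neq b$ via an exceptional-edge contradiction) forces $w' = b$. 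From $b$ onwards the only new edge incident to $b$ in $G_1$ is $ab$, whose endpoint $a \notin M_{G_1}$ contributes nothing, so the remainder of the path proceeds along $G$-edges within $\{b\}\cup M_G$ and therefore stays inside $V(\Gamma_G(b))$. Hence $y \in V(\Gamma_G(b)) \subseteq W$.

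The main obstacle is verifying that no $B_G$-vertex other than $b$ enters $M_{G_1}$ along the continuation of the path from $b$, i.e.\ that the portion of the path after $b$ really lives in $\{b\}\cup M_G$. This boils down to reusing the same ``almost-good'' reasoning from the proof of Claim~\ref{c::gammaChange} that prohibits a second $B_G$-$B_G$ edge (any additional migrant would need such an edge to pick up a new top-neighbor, which the exceptional-edge bookkeeping forbids). Once that is in hand, combining the two inclusions with the edge observation of the first paragraph delivers $\Gamma_{G_1}(x) = G[W]$, completing the proof.
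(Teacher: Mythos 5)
The paper gives no proof of this corollary, simply asserting it is an immediate consequence of Claim~\ref{c::gammaChange}; so the comparison is to the natural argument that the authors evidently have in mind. Your reduction to the vertex-set equality $V(\Gamma_{G_1}(x))=W$ plus the observation that $a\notin\{x\}\cup M_{G_1}$ (so $G_1[W]=G[W]$) is exactly right, and your proof of $W\subseteq V(\Gamma_{G_1}(x))$ is fine. Your path-tracing argument for the reverse inclusion is also in the right spirit and, crucially, you correctly flag the one nontrivial point: ruling out a vertex of $B_G\setminus\{b\}$ appearing in $M_{G_1}$.

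The one imprecision is in how you describe that ruling-out step. You say ``any additional migrant would need a $B_G$--$B_G$ edge to pick up a new top-neighbor, which the exceptional-edge bookkeeping forbids.'' But almost-goodness does \emph{not} forbid a single $B_G$--$B_G$ edge; it only forbids two distinct ones, and in the subcase where the escape from $V(\Gamma_G(b))$ happens via a vertex of $M_G$ rather than directly from $b$, you produce only one such edge $qz$, so mere edge-counting does not close the case. What actually closes it is the same 3-coloring argument from the proof of Claim~\ref{c::gammaChange}: if such a migrant $q$ existed, its putative new top-neighbor $z$ lies in $B_G$, $qz$ must be the unique exceptional edge, and then the proper coloring $c$ (color 1 on $T_G$, colors 2,3 on $M_G\cup B_G$, legitimate because $G$ is almost good) is a proper $3$-coloring of $G_1$ with $c(z)\neq c(v_0)$, contradicting $z\in T_{G_1}$. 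In fact this argument shows more: $T_{G_1}=T_G$, hence $M_{G_1}=M_G\cup\{b\}$, and then $G_1[\{x\}\cup M_{G_1}]=G[\{x,b\}\cup M_G]$, from which $V(\Gamma_{G_1}(x))=W$ follows at once by maximality of middle-components in $G[M_G]$ --- this is arguably the cleaner route, and probably what ``immediate'' means. So: correct structure, correct identification of the crux, but the justification of the crux should be the coloring contradiction rather than edge-counting.
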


\begin{claim}\label{c::xNotInTop}
Consider a good graph $G$ with the corresponding partition $V(G) =
T_G \cup M_G \cup B_G$. Let $G_1 = G \cup \{e\}$ for some $e \notin
E(G)$, let $x,y \in B_{G_1}$ and let $G_2 = G_1 \cup \{v_0y\}$.
Assume that $G_2$ satisfies property (P1). Then, $x \notin T_{G_2}$.
\end{claim}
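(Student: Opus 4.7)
My plan is to construct, in each case, a proper $3$-coloring of $G_2$ under which $x$ and $v_0$ receive distinct colors; by the definition of $T_{G_2}$ this is equivalent to $x \notin T_{G_2}$. First I would dispose of the trivial sub-case $x=y$: the new edge $v_0 y = v_0 x$ joins $x$ to $v_0 \in T_{G_2}$, which forces $x \in M_{G_2}$ and hence $x \notin T_{G_2}$. From now on I assume $x \neq y$, and split according to whether $e \in BAD_G$.

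If $e \notin BAD_G$, then by Observation~\ref{obs::notBadIsGood} the graph $G_1$ is good. Exploiting properties (P1) and (P2) of $G_1$, the connected components of $G_1[B_{G_1} \cup M_{G_1}]$ are precisely the sets $\Gamma_{G_1}(b)$ for $b \in B_{G_1}$, together with the middle-components of $G_1$ with no attached bottom vertex. Each of these is bipartite: trivially for a middle-component, and via the contrapositive of Claim~\ref{c::topClassification}(a) applied to $b \notin T_{G_1}$ for a $\Gamma_{G_1}(b)$. I would fix a proper $2$-coloring of $G_1[B_{G_1} \cup M_{G_1}]$ with colors $\{2,3\}$ and assign color $1$ to every vertex of $T_{G_1}$; since $N_{G_1}(T_{G_1}) \subseteq M_{G_1}$, this is a valid $3$-coloring of $G_1$ satisfying $c(v_0) = 1$ and $c(x), c(y) \in \{2,3\}$. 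The new edge $v_0 y$ of $G_2$ is then automatically properly colored (since $c(v_0) \neq c(y)$), so the coloring extends to a proper $3$-coloring of $G_2$ in which $c(x) \neq c(v_0)$.

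If instead $e \in BAD_G$, then Claim~\ref{c::BadEdgeKeepsB} (together with the argument in its proof) gives $T_{G_1}=T_G$, $M_{G_1}=M_G$ and $B_{G_1}=B_G$. I would now run the analogous construction using $G$ in place of $G_1$: since $G$ is good, the components of $G[B_G \cup M_G]$ are the $\Gamma_G(b)$'s and isolated middle-components, all bipartite. Writing $e = uv$ with $u \in V(\Gamma_G(x'))$ and $v \in V(\Gamma_G(y'))$ for distinct $x', y' \in B_G$, property (P2) of $G$ forces $\Gamma_G(x')$ and $\Gamma_G(y')$ to be \emph{different} components of $G[B_G \cup M_G]$, so I can choose their $2$-colorings independently and in particular arrange $c(u) \neq c(v)$. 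Giving $T_G$ the color $1$ then produces a proper $3$-coloring of $G_1 = G \cup \{e\}$ with $c(v_0)=1$ and $c(x), c(y) \in \{2,3\}$, which extends to $G_2$ exactly as in the previous case and again shows $x \notin T_{G_2}$.

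I expect the delicate step to be this last case: the $2$-coloring must be chosen so that the bad edge $e$ ends up properly colored, which is possible precisely because the $\Gamma_G$-components of its two endpoints are disjoint, a fact guaranteed by property (P2) of the good graph $G$.
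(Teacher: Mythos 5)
Your proof is correct, and it takes a genuinely different route from the paper's. The paper works at the level of the single subgraph $\Gamma_{G_2}(x)$: after splitting on whether $\Gamma_{G_2}(x) = \Gamma_{G_1}(x)$, it shows (invoking Claim~\ref{c::gammaChange} and Corollary~\ref{cor::NewGamma} in the harder case) that $\Gamma_{G_2}(x)$ is $2$-colorable, and then applies Claim~\ref{c::topClassification}(b) to $G_2$ --- which is exactly the step that consumes the hypothesis that $G_2$ satisfies (P1). You instead construct an explicit proper $3$-coloring of $G_2$ assigning $x$ and $v_0$ distinct colors: the goodness of $G_1$ (or of $G$, in the $e \in BAD_G$ case, after observing via Claim~\ref{c::BadEdgeKeepsB} that the partition is unchanged) decomposes $G_1[B\cup M]$ (resp.\ $G[B\cup M]$) into pairwise disjoint bipartite components of the form $\Gamma(b)$ or unattached middle-components, which you $2$-color with $\{2,3\}$ --- in the bad-edge case flipping one component so that $e$ is properly colored, which is legitimate precisely because (P2) of $G$ places the two endpoints of $e$ in different components --- while $T$ receives color $1$. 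This bypasses Claim~\ref{c::gammaChange} and Corollary~\ref{cor::NewGamma} entirely and, as a by-product, never actually uses the stated hypothesis that $G_2$ satisfies (P1), so it establishes a mildly stronger statement. The paper's route has the advantage that Claim~\ref{c::gammaChange} and Corollary~\ref{cor::NewGamma} are machinery reused elsewhere (e.g.\ in Claims~\ref{c::MiniEnsuresGoodGraph} and~\ref{c::deadRemains Dead}), whereas your argument is more self-contained and direct for this particular claim.
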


\begin{proof}
Assume first that $\Gamma_{G_2}(x) = \Gamma_{G_1}(x)$. By the
assumption that $x \in B_{G_1}$ and by
Claim~\ref{c::topClassification}(a) we deduce that $\Gamma_{G_1}(x)$ is
2-colorable and thus so is $\Gamma_{G_2}(x)$. Since, moreover, $G_2$
satisfies property (P1) by assumption, it follows by
Claim~\ref{c::topClassification}(b) that $x \notin T_{G_2}$.

Assume then that $\Gamma_{G_2}(x) \neq \Gamma_{G_1}(x)$. As $G_1$ is
an almost good graph, by Claim~\ref{c::gammaChange} there is an edge
of $G_1$ between $V(\Gamma_{G_1}(x))$ and $y$. Since $G$ is a good
graph, it contains no edges between $V(\Gamma_G(x))$ and $y$.
Therefore, the edge $e$ must have one endpoint in $V(\Gamma_{G}(x))$
and one endpoint in $V(\Gamma_{G}(y))$. Let $C$ denote the connected
graph $\Gamma_{G}(x) \cup \Gamma_{G}(y) \cup \{e\}$. Since $x,y \in
B_G$, it follows by Claim~\ref{c::topClassification}(a) that
$\Gamma_{G}(x)$ and $\Gamma_{G}(y)$ are 2-colorable, and therefore
so is $C$. Since $\Gamma_{G_1}(x)$ and $\Gamma_{G_1}(y)$ are clearly
subgraphs of $C$, it follows by Corollary~\ref{cor::NewGamma} that
$\Gamma_{G_2}(x) = C$. Since $G_2$ satisfies property (P1), and
since $\Gamma_{G_2}(x)$ is 2-colorable, it follows by
Claim~\ref{c::topClassification}(b) that $x \not\in T_{G_2}$.
\end{proof}

Now, we present a strategy for Mini; it is divided into two simple
stages. In the first stage Mini claims only edges incident with
$v_0$, aiming to make its degree as large as possible, and in the
second stage she plays arbitrarily. For convenience we assume that
Max is the first player; if Mini is the first player, then in her
first move she claims $v_0 z$ for an arbitrary vertex $z \in B$ and
the remainder of the proof is essentially the same.
\begin{description}
\item [Stage I:] This stage lasts as long as there are vertices in
$B$. Once $B = \emptyset$, this stage is over and Mini proceeds to
Stage II. Before each of Mini's moves during Stage I, let $G$ denote
the graph at that point and let $e$ denote the
last edge claimed by Max. Mini plays as follows:
\begin{description}
\item [(i)] If there exists a vertex $x \in B_G$ such that $\{e\}
\cap \Gamma_G(x) \neq \emptyset$, then Mini claims $v_0x$ (if there
are several such bottom vertices, then Mini picks one arbitrarily).
\item [(ii)] Otherwise, Mini claims $v_0 z$, where $z \in B_{G}$ is an arbitrary vertex.
\end{description}
Mini then repeats Stage I.

\item [Stage II:] Throughout this stage, Mini follows the trivial strategy.
\end{description}

It remains to prove that Mini can indeed follow the proposed
strategy and that, by doing so, she ensures that $e(G) \leq 21
n^2/64 + O(n)$ will hold at the end of the game. Starting with the
former, note that Mini can clearly follow Stage II of the strategy.
As for Stage I, in each of her moves in this stage Mini claims an
edge between $v_0$ and some vertex $u \in B_{G}$. By definition
$v_0u$ is free and $\chi(G \cup \{v_0 u\}) \leq 3$. Hence Mini can
follow the proposed strategy. In order to prove the latter, we first
prove the following four additional claims.

\begin{claim}\label{c::MiniEnsuresGoodGraph}
Immediately after each of Mini's moves in Stage I, the current graph $G$ built by both players is good.
\end{claim}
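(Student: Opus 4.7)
The plan is to proceed by induction on the number of rounds of Stage~I already played (a round being one move by Max followed by Mini's response). The base case is trivial: before the game begins $G = \overline{K}_n$ satisfies (P1) and (P2) vacuously. For the inductive step, assume that the graph $G_0$ immediately after Mini's previous move is good. Let $e$ denote Max's next move and set $G_1 := G_0 \cup \{e\}$, and let $f := v_0 w$ denote Mini's response (so $w = x$ in case~(i) and $w = z$ in case~(ii), and in both cases $w \in B_{G_1}$); set $G_2 := G_1 \cup \{f\}$. I would verify that $G_2$ is good by checking properties~(P1) and~(P2) in turn.

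For~(P1): no edge of $G_0$ has both endpoints in $B_{G_0}$ by the inductive hypothesis, and since the bottom can only shrink as edges are added, the same is true with $B_{G_2}$ in place of $B_{G_0}$; the edge $f$ is not a bottom-bottom edge since $v_0 \in T_{G_2}$. The only edge requiring attention is $e$. In case~(ii), the defining hypothesis of the case --- applied with $x$ taken to be an endpoint of $e$ --- forces both endpoints of $e$ to lie outside $B_{G_1} \supseteq B_{G_2}$. In case~(i), the endpoint of $e$ guaranteed to lie in $V(\Gamma_{G_1}(x))$ is either $x$ itself --- which, being adjacent to $v_0 \in T_{G_2}$ in $G_2$, lies in $M_{G_2}$ --- or a vertex of $M_{G_1} \subseteq M_{G_2}$; either way it avoids $B_{G_2}$, which suffices.

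Once~(P1) is established, I would apply Claim~\ref{c::xNotInTop} to each $x \in B_{G_1}$ (with $y := w$) to obtain that no vertex of $B_{G_1}$ enters $T_{G_2}$, and hence $T_{G_2} = T_{G_1}$, $M_{G_2} = M_{G_1} \cup \{w\}$, and $B_{G_2} = B_{G_1} \setminus \{w\}$. Then~(P2) would be verified via the equivalent formulation $BAD_{G_2} \cap E(G_2) = \emptyset$ from Observation~\ref{obs::equivalent}. The edge $f$ is not bad because $v_0 \in T_{G_2}$, and the edges of $G_0$ together with $e$ would be handled by describing explicitly how the $\Gamma_{G_2}(\cdot)$-sets evolve from $\Gamma_{G_0}(\cdot)$ under Max's and then Mini's move; the tools for this are Claim~\ref{c::gammaChange}, Corollary~\ref{cor::NewGamma}, and Claim~\ref{c::BadEdgeKeepsB}, combined with the inductive assumption that $G_0$ is good.

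The main obstacle is case~(i) when Max's edge $e$ already lies in $BAD_{G_0}$, so that $G_1$ itself fails to be good. In that subcase Claim~\ref{c::BadEdgeKeepsB} gives $B_{G_1} = B_{G_0}$ and preserves the entire partition, while the sole effect of $e$ is to merge the previously disjoint sets $V(\Gamma_{G_0}(x))$ and $V(\Gamma_{G_0}(y))$, where $x, y \in B_{G_0}$ are two distinct bottom vertices witnessing $e \in BAD_{G_0}$. Mini's response $v_0 x$ then promotes $x$ out of the bottom, so that the merged set becomes a single middle-component of $G_2$ having exactly one attached bottom vertex, namely $y$. The crucial feature of Mini's strategy here is precisely that she plays $v_0 x$ for some $x \in B_{G_1}$ whose $\Gamma_{G_1}(x)$ meets $e$; this is what converts a temporary violation in $G_1$ into a good configuration in $G_2$, and a small case analysis of the possible positions of the endpoints of $e$ within $V(\Gamma_{G_0}(x)) \cup V(\Gamma_{G_0}(y))$ (both endpoints bottom, one bottom and one middle, or both middle) confirms that no new attached bottom vertex is created on any middle-component of $G_2$.
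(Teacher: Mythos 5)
Your proposal takes essentially the same route as the paper's proof: induction on Mini's moves, a split on whether Max's edge is bad, and $\Gamma$-set bookkeeping driven by Claims~\ref{c::BadEdgeKeepsB} and~\ref{c::gammaChange} and Corollary~\ref{cor::NewGamma}. The organizational reshuffle --- first establishing (P1) for $G_2$ directly, then using Claim~\ref{c::xNotInTop} to pin down $T_{G_2}, M_{G_2}, B_{G_2}$, then checking $BAD_{G_2}\cap E(G_2)=\emptyset$ --- is legitimate, though the paper instead checks directly that every bottom vertex of $G_2$ is good, dispatches the non-bad case in one line with Observation~\ref{obs::notBadIsGood}, and reserves Claim~\ref{c::xNotInTop} for Claim~\ref{c::MiniDoesntMoveToTop}. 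Two points your deferred ``small case analysis'' must actually cover, neither a fundamental gap: (a) when $e\in BAD_{G_0}$ you need that the candidate set for strategy step (i) is exactly the witnessing pair $\{x,y\}$ --- otherwise Mini might promote a third vertex and leave both witnesses attached to the merged component; this follows from (P2) for $G_0$, and the paper flags it explicitly with the parenthetical ``by the induction hypothesis, no other bottom vertex is a candidate.'' (b) The statement that ``the merged set becomes a single middle-component of $G_2$'' is not literally correct: $y$ stays in $B_{G_2}$, and $V(\Gamma_{G_0}(y))\setminus\{y\}$ may consist of several middle-components of $G_0$, only one of which is joined via $e$ to $x$'s side; what the $\Gamma$-computation actually gives (and what you need) is that $y$ remains the unique bottom vertex attached to each piece of the merger, which is the conclusion the paper reaches by verifying that $x$ is a good vertex of $G_2$.
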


\begin{proof}
We will prove this claim by induction on the number of moves played
by Mini. The claim clearly holds before the game starts. Assume that
it holds immediately after Mini's $i$th move for some non-negative
integer $i$ (where $i=0$ we refer to the initial graph before the
game begins); we will prove it holds after her $(i+1)$st move as
well (assuming it is played in Stage I). Let $G$ denote the graph
immediately after Mini's $i$th move, let $uv$ denote the edge
claimed by Max in his subsequent move, let $G_1 = G \cup \{uv\}$,
and let $G_2$ denote the graph immediately after Mini's $(i+1)$st
move.

If $uv \notin BAD_G$, then by Observation~\ref{obs::notBadIsGood}
$G_1$ is good. Mini then claims an edge $e$ with one
endpoint in $T_{G_1}$ (the vertex $v_0$), and so $e \notin BAD_{G_1}$
by definition. Therefore, applying Observation~\ref{obs::notBadIsGood}
once again we infer that $G_2$ is good.

Assume then that $uv$ is a bad edge. Therefore, by definition, there
exist distinct vertices $x,y \in B_G$ such that $u \in
V(\Gamma_G(x))$ and $v \in V(\Gamma_G(y))$. Note that according to
her strategy, in her next move Mini claims either $v_0 x$ or $v_0 y$
(by the induction hypothesis, no other bottom vertex is a
candidate); without loss of generality assume that she claims $v_0
y$. In order to prove that $G_2$ is good, we will show that every
vertex of $B_{G_2}$ is good. Consider first a vertex $z \in B_{G_2}
\setminus \{x\}$ (note that $z \neq y$ as $y \in M_{G_2}$). Clearly
$z \in B_G$ and since $G$ is a good graph, $z$ is a good vertex in
$G$. Since $\{uv, v_0y\} \cap \Gamma_G(z) = \emptyset$, it is easy
to see that $\Gamma_{G_2}(z) = \Gamma_{G_1}(z) = \Gamma_G(z)$ and
that $z$ is a good vertex in $G_2$ as well. Now consider $x$. Since
$G_1$ is an almost good graph, $\Gamma_{G_2}(x) = \Gamma_{G}(x) \cup
\Gamma_{G}(y) \cup \{uv\}$ by Corollary~\ref{cor::NewGamma}. Since
$x$ and $y$ are both good vertices in $G$, it is evident that $x$ is
a good vertex in $G_2$ as well. This concludes the proof of the
claim.
\end{proof}

\begin{claim}\label{c::MiniDoesntMoveToTop}
Throughout Stage I, no vertex is moved from $B$ to $T$ as a result of
a move by Mini.
\end{claim}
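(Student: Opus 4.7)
The plan is to derive this claim as an almost immediate consequence of Claim~\ref{c::xNotInTop}, with the previously established Claim~\ref{c::MiniEnsuresGoodGraph} supplying the one nontrivial hypothesis, namely that the graph right after Mini's move is good. To line up notation with Claim~\ref{c::xNotInTop}, fix an arbitrary Stage~I move of Mini and write $G$ for the graph just after Mini's previous move (the empty graph if Max is the first player and this is Mini's first move, or the single edge $\{v_0 z\}$ if Mini is the first player), $e$ for Max's intervening move so that $G_1 := G \cup \{e\}$ is the graph Mini now sees, and $G_2 := G_1 \cup \{v_0 y\}$ for the graph the moment Mini plays her edge $v_0 y$.

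The first step is the bookkeeping observation that $y \in B_{G_1}$. In the statement of Mini's Stage~I strategy the symbol $G$ denotes the graph at the moment of her move, i.e.\ \emph{after} Max's move, which in the present notation is $G_1$; hence in both branches (i) and (ii) her chosen vertex $y$ lies in $B_{G_1}$. Once this is noted, Claim~\ref{c::MiniEnsuresGoodGraph} (applied to the previous round and to the current round, respectively) supplies the facts that $G$ is good and $G_2$ is good; in particular, $G_2$ satisfies property~(P1).

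For an arbitrary $z \in B_{G_1}$, all of the hypotheses of Claim~\ref{c::xNotInTop} are now in place, taking its $x$ to be our $z$ and its $y$ to be our $y$, and that claim immediately yields $z \notin T_{G_2}$. Since $z$ was arbitrary, no vertex of $B_{G_1}$ ends up in $T_{G_2}$, which is the statement to be proved. The only point requiring care is the notational reconciliation mentioned above: the strategy's ``$G$'' is not the same graph as the ``$G$'' in the induction of Claim~\ref{c::MiniEnsuresGoodGraph}. Beyond that I do not anticipate any substantial obstacle, since the heavy lifting has already been absorbed into Claims~\ref{c::xNotInTop} and~\ref{c::MiniEnsuresGoodGraph}.
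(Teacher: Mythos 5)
Your proof is correct and takes essentially the same route as the paper's: set up the triple $G, G_1, G_2$ for Mini's previous move, Max's intervening move, and Mini's current move, invoke Claim~\ref{c::MiniEnsuresGoodGraph} to get that $G$ is good and $G_2$ satisfies~(P1), and then apply Claim~\ref{c::xNotInTop} to conclude no vertex of $B_{G_1}$ lies in $T_{G_2}$. The extra care you take in reconciling the strategy's running notation with the hypotheses of Claim~\ref{c::xNotInTop} (and in covering the case that Mini moves first) is fine but not a substantive departure from the paper's argument.
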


\begin{proof}
Recall that by assumption Max is the first player to move. Let $i$
be some positive integer, let $G$ denote the graph immediately
before Max's $i$th move, let $G_1$ denote the graph immediately
after Max's $i$th move, and let $G_2$ denote the graph immediately
after Mini's $i$th move. Since, by
Claim~\ref{c::MiniEnsuresGoodGraph}, $G$ and $G_2$ are good graphs,
and since Mini in her $i$th move claims $v_0y$ for some $y \in
B_{G_1}$, it follows by Claim~\ref{c::xNotInTop} that for every
vertex $x \in B_{G_1}$ (including $y$), $x \notin T_{G_2}$.
\end{proof}

\begin{claim}\label{c::deadRemains Dead}
Let $x$ be a bottom vertex which is attached to a middle-component
$C$. If at some point during Stage I $x$ is moved to the top, then
from this point until the end of Stage I, immediately after every
move of Mini, no bottom vertex will be attached to the unique
middle-component containing $C$.
\end{claim}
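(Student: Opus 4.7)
The plan is to prove the claim by induction on $j \geq i$, where $i$ is the round during which $x$ is moved from $B$ to $T$. By Claim~\ref{c::MiniDoesntMoveToTop} this occurs during Max's $i$th move, so write $G$, $G_1$, $G_2$ for the graph before Max's $i$th move, after Max's $i$th move, and after Mini's $i$th move respectively, and more generally $G^{(j)}$ for the graph after Mini's $j$th move and $G^{(j+1)}_1$ for the graph after Max's $(j+1)$th move. Define $D^{(j)}$ to be the middle-component of $G^{(j)}$ containing $V(C)$; this is well-defined because middle-components only grow or merge. The inductive statement is that $D^{(j)}$ has no attached bottom vertex. Each $G^{(j)}$ is good by Claim~\ref{c::MiniEnsuresGoodGraph}, so property~(P2) already forces at most one attached bottom per middle-component; the content of the claim is that this number is in fact zero for $D^{(j)}$.

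For the base case $j = i$, property~(P2) in $G$ makes $x$ the unique bottom attached to $C$. Since $x \in B_G$ and $x \in T_{G_1}$, Claim~\ref{c::topClassification} forces $\Gamma_{G_1}(x)$ to be non-2-colorable, so Max's edge $uv$ must close an odd cycle through $x$. Inspecting the remaining sub-cases (Max's edge disjoint from $V(\Gamma_G(x))$, a $T$-$B$ edge, or a $B$-$B$ edge) shows that $\Gamma_{G_1}(x)$ would have stayed 2-colorable by (P2) of $G$, so in fact $\{u,v\} \subseteq V(\Gamma_G(x)) = \{x\} \cup V(C_1) \cup \cdots \cup V(C_j)$, where $C_1 = C, C_2, \ldots, C_j$ are the middle-components attached to $x$ in $G$. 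The middle-component $D^{(i)}_1$ of $G_1$ containing $V(C)$ is therefore a union of some of the $C_\ell$'s, each of which had only $x$ as attached bottom in $G$ by (P2); since $x \in T_{G_1}$ and $uv$ introduces no new $B$-$M$ incidence, $D^{(i)}_1$ has no attached bottom. In particular $\{u,v\} \cap V(\Gamma_{G_1}(z)) = \emptyset$ for every $z \in B_{G_1}$, so Mini plays Case~(ii) by picking some $v_0 z$; by (P2) in $G$ this $z$ is not attached to any $C_\ell$, so $z$'s new middle-component is disjoint from $D^{(i)}_1$, and hence $D^{(i)} = D^{(i)}_1$ still has no attached bottom.

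For the inductive step, assume the statement at stage $j$ and examine round $j+1$. Let $D_1$ denote the middle-component of $G^{(j+1)}_1$ containing $V(C)$. By the inductive hypothesis combined with (P2) in $G^{(j)}$, $D_1$ can acquire an attached bottom $z$ only if Max's edge $uv$ either (a) goes from $V(D^{(j)})$ to $B$ with $z = v$, or (b) is an $M$-$M$ edge merging $D^{(j)}$ with a middle-component $E$ whose unique attached bottom (by (P2)) is $z = z_E$. In either case $z$ is the unique bottom satisfying $\{u,v\} \cap V(\Gamma_{G^{(j+1)}_1}(z)) \neq \emptyset$, so Mini's Case~(i) plays exactly $v_0 z$. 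The vertex $z$ then moves to $M$ and joins $D^{(j+1)}$, absorbing all of $z$'s other attached middle-components (each with unique attached bottom $z$ in $G^{(j)}$ by (P2)); the resulting $D^{(j+1)}$ has no attached bottom because $D^{(j)}$ had none by the inductive hypothesis, the absorbed components had only $z$ attached, and $z$ itself has no $B$-neighbors in $G^{(j+1)}$ (none in $G^{(j)}$ by (P1), and its new edges go to $V(D^{(j)}) \subseteq M$ or to $v_0 \in T$). In all remaining sub-cases $D_1$ has no attached bottom to begin with and Mini plays Case~(ii); the analogous (P2) argument shows her chosen $z$ is not attached to $D_1$, so $D^{(j+1)}$ is unaffected. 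The main obstacle is this bookkeeping: pinpointing in each scenario exactly which bottom vertex threatens the component containing $V(C)$ and verifying that Mini's strategy absorbs precisely that vertex.
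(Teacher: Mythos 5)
Your proof is correct and follows essentially the same approach as the paper's: the same induction on rounds after $x$ enters $T$, with the base case showing Max's edge lies inside $V(\Gamma_G(x))$ so $D^{(i)}_1$ inherits no attached bottom, and the inductive step identifying the unique bottom $z$ that Mini's Case (i) removes so that $D^{(j+1)}$ stays clean. You organize the inductive case analysis by the type of Max's edge rather than by whether $C_2$ has an attached bottom, and you spell out the Mini bookkeeping in more detail, but the argument is the same.
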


\begin{proof}
We prove this claim by induction on the number of rounds played
after $x$ was moved to the top. Consider first the moment at which
$x$ is moved to the top. By Claim~\ref{c::MiniDoesntMoveToTop} this
happens as a result of Max's $i_0$th move, for some positive integer
$i_0$. Denote the players' graph immediately before this move by
$G_0$ and the graph immediately after this move by $G_0'$. Since by
Claim~\ref{c::MiniEnsuresGoodGraph}, $G_0$ is a good graph, it is
not hard to see (similarly to the proof of
Claim~\ref{c::deg2ToComponent}) that in his $i_0$th move Max claimed
an edge $e \subseteq \Gamma_{G_0}(x)$, and thus $V(\Gamma_{G_0'}(x))
= V(\Gamma_{G_0}(x))$. Therefore, there are no edges of ${G_0'}$
between $V(\Gamma_{G_0'}(x))$ and $B_{G_0'}$, as $x$ itself is in
$T_{G_0'}$ by assumption, and it was the only vertex attached to the
middle-components of $\Gamma_{G_0}(x)$ (that is, the
middle-components of $G_0$ contained in $\Gamma_{G_0}(x)$) since
${G_0}$ is a good graph. In her subsequent move, Mini certainly does
not attach any vertex to any of the middle-components of
$\Gamma_{G_0}(x)$, nor does she change $\Gamma(x)$, so the claim
holds at this point.

Now let $i \geq i_0$ and assume the claim holds immediately after
Mini's $i$th move. Let $G_1$ be the graph after Mini's $i$th move,
let $G_2$ be the graph after Max's subsequent move, and let $G_3$ be
the graph after Mini's $(i+1)$st move. Let $C$ be a middle-component
of $\Gamma_{G_0}(x)$ and for $j = 1,2,3$ let ${C_j}$ denote the
middle-component containing $C$ in $G_j$. If there is no bottom
vertex attached to ${C_2}$ in $G_2$, then by Mini's strategy ${C_3}
= {C_2}$ and there will be no such vertex in $G_3$ either. Assume
then that there is such a vertex $y$. It follows that in his
$(i+1)$st move Max claimed an edge $uv$ such that $u \in {C_1}$ and
$v \in V(\Gamma_{G_1}(y))$. Therefore ${C_1} \subseteq
\Gamma_{G_2}(y)$. Since, by the induction hypothesis, there is no
vertex attached to ${C_1}$ in $G_1$ and since $y$ is the only vertex
attached to any middle-component of $\Gamma_{G_1}(y)$ in $G_1$ (by
property~(P2), as $G_1$ is a good graph), it follows that there is
no bottom vertex $z \neq y$ in $G_2$ such that $\{uv\} \cap
\Gamma_{G_2}(z) \neq \emptyset$. Therefore, by the proposed strategy
Mini claims $v_0y$ in her $(i+1)$st move and thus ${C_3} = {C_1}
\cup \Gamma_{G_1}(y) \cup \{uv\}$. It follows that no bottom vertex
is attached to ${C_3}$ in $G_3$.
\end{proof}

\begin{claim} \label{c::SizeOfTop}
$|T| \leq \frac{n+3}{4}$ holds at the end of Stage I.
\end{claim}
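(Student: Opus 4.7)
The plan is to bound $\alpha := |T| - 1$ from above by roughly $n/4$. By Claim~\ref{c::MiniDoesntMoveToTop}, each of the $\alpha$ vertices moved from $B$ to $T$ during Stage I is moved by Max; I would denote them $x_1, \ldots, x_\alpha$ in the chronological order of their toppings. For each $i$, Claim~\ref{c::deg2ToComponent} guarantees that immediately after the $i$-th topping, $x_i$ has two neighbors in a common middle-component $D_i$, which together with a path in $D_i$ yields an odd cycle $O_i \subseteq \Gamma(x_i)$. Since $V(\Gamma(x_i)) \subseteq \{x_i\} \cup M$ and every Mini move adds an edge incident to $v_0 \notin \{x_i\} \cup M$, all edges of $O_i$ are Max edges.

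The heart of the argument will be to show that the cycles $O_1, \ldots, O_\alpha$ are pairwise edge-disjoint. Fixing $i < j$ and a candidate $e \in O_i$: if $e$ is incident to $x_i$, then $x_i \in T$ at every later time and $O_j \subseteq \{x_j\} \cup M$ would force $e \notin O_j$. Otherwise $e$ lies entirely within the ``dead'' merged middle-component containing $D_i$, and by Claim~\ref{c::deadRemains Dead} no bottom vertex is attached to this component immediately after any subsequent Mini move; in particular $x_j$ has no neighbor in it just before Max's $j$-th topping move. A short case analysis of the single edge Max plays when topping $x_j$---using 2-colorability of both $\Gamma(x_j)$ and the dead component to see that their union through any one added edge remains 2-colorable unless $x_j$ acquires two neighbors outside the dead component---would then rule out Claim~\ref{c::deg2ToComponent}'s two-neighbor witness being located inside the dead component. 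Hence $\Gamma(x_j)$ after the $j$-th topping is disjoint from the dead component, and in particular $e \notin O_j$.

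Once edge-disjointness is established, $|E(O_i)| \ge 3$ yields that Max plays at least $3\alpha$ edges in Stage I. Writing $m$ and $M^*$ for the numbers of Mini's and Max's Stage I moves, and $\beta$ for the number of Max moves of the form $ty$ with $t \in T$, $y \in B$, a vertex-counting argument on $B$ gives $m + \alpha + \beta = n-1$, while Max playing first gives $M^* \le m+1$. Combining with $M^* \ge 3\alpha$ I would obtain $4\alpha + \beta \le n$; whenever $\beta \ge 1$ or Mini makes the last Stage I move ($M^* \le m$), this already gives $\alpha \le (n-1)/4$, and the residual case---$\beta = 0$ together with Max's last move being a topping---admits a small refinement of the cycle-edge accounting that recovers the same bound. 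In every case $|T| = \alpha + 1 \le (n+3)/4$. The hard part will be establishing the edge-disjointness of the $O_i$, which is precisely where Claim~\ref{c::deadRemains Dead} is indispensable: without it Max could complete a new odd cycle $O_j$ through edges of a previously completed $O_i$, breaking the $3\alpha$ count.
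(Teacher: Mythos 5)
Your approach mirrors the paper's: assign to each vertex moved from $B$ to $T$ a set of at least three distinct Max edges, prove pairwise disjointness of these sets via Claims~\ref{c::MiniDoesntMoveToTop}, \ref{c::deg2ToComponent} and~\ref{c::deadRemains Dead}, and count. The paper assigns a larger set --- all edges incident to the topped vertex $x$ together with all edges inside the middle-components $x$ was attached to at the moment of topping --- and this makes disjointness essentially immediate, since by Claim~\ref{c::deadRemains Dead} those components are dead afterwards, while a later topped vertex $x_j$ contributes only edges incident to $x_j$ and edges in components $x_j$ is attached to, which are not dead. Your odd cycle $O_i$ is a subset of the paper's assigned set, so the idea is the same, but the disjointness argument you sketch (that Max's single added edge cannot cause the two-neighbor witness component $D_j$ from Claim~\ref{c::deg2ToComponent} to absorb the dead component) needs to be carried out via the case analysis on the topping edge that you only gesture at; it does work, but it is more delicate than the paper's route precisely because you are tracking a single cycle rather than the full component.

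The real gap is the residual case of your bookkeeping. When $\beta = 0$ and Max makes the last Stage~I move, your relations $m + \alpha = n-1$ and $M^* = m+1 \ge 3\alpha$ give only $\alpha \le n/4$, hence $|T| \le n/4 + 1$, which for $n \equiv 0 \pmod 4$ is strictly worse than $(n+3)/4$. The ``small refinement'' you invoke is not supplied, and the most natural candidate --- Max's first move is a wasted bad edge with both endpoints in $B$ --- does not obviously yield an extra Max edge outside $\bigcup_i O_i$, since both its endpoints can later enter $M$, placing that edge inside a middle-component and hence potentially on one of your cycles. This is harmless for Theorem~\ref{th::3colorability}, which only uses $|T| \le n/4 + O(1)$ (and, for what it is worth, the paper's own phrase ``Mini increased the degree of $v_0$ by at least $3$ per top vertex'' glosses over a similar first-player parity subtlety), but as written your argument does not establish the exact bound stated in the claim.
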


\begin{proof}
Consider the moment at which some vertex $x$ is moved from the
bottom to the top (if this never happens, then $|T| = 1$). At this
moment we \emph{assign} to $x$ every edge of $G$ which is incident
with $x$ and every edge of every middle-component to which $x$ is
attached. We claim that any edge of $G$ is assigned to at most one
vertex. Indeed, this is evident for the edges incident to the vertex
that was moved to the top, and is also true for the edges inside
the middle-components it was attached to by
Claims~\ref{c::MiniDoesntMoveToTop} and~\ref{c::deadRemains Dead}.
In addition, it follows by Claims~\ref{c::MiniDoesntMoveToTop}
and~\ref{c::deg2ToComponent} that every top vertex, other than
$v_0$, is assigned at least 3 edges. Since throughout Stage I Mini
claims only edges which are incident with $v_0$, all assigned edges
were claimed by Max. It thus follows that for every vertex of $T
\setminus \{v_0\}$, Mini increased the degree of $v_0$ by at least
3, that is, $|M| \geq d(v_0) \geq 3 (|T|-1)$. Since $B = \emptyset$ holds by
definition at the end of Stage I, it follows that $|T| + |M| = n$
holds at that point. We conclude that $|T| \leq \frac{n+3}{4}$ as
claimed.
\end{proof}

\vspace*{0.5cm}

We can now complete the proof of Theorem~\ref{th::3colorability}.
Let $X, Y$ and $Z$ denote the color classes in the unique proper
$3$-coloring of $G$ at the end of the game. It follows by the
definition of $T, M$ and $B$ that without loss of generality $X
\cup Y = M$ and $Z = T$. We thus conclude that
$$
e(G) = |T| (|X| + |Y|) + |X||Y| \leq \frac{n+3}{4} \cdot
\frac{3n-3}{4} + \left(\frac{3n-3}{8} \right)^2 = \frac{21}{64} n^2
+ O(n)
$$
as claimed. {\hfill $\Box$ \medskip\\}

\textbf{Proof of Theorem~\ref{th::Kcolorability}} For convenience we
assume that Mini is the first player; if Max is the first player,
then he makes an arbitrary first move and the remainder of the proof
is essentially the same. Let $k$, $C$ and $n$ be as in the statement
of the theorem; by choosing $C$ to be sufficiently large, we can
assume that $k$ is large as well, as otherwise the statement of the theorem trivially holds. Since the game in question is a
finite, perfect information game, with no chance moves, then exactly
one of the following must hold:
\begin{enumerate} [(a)]
\item Max has a strategy to ensure that $e(G) \geq \left(1 - C \log k/k \right)
\binom{n}{2}$ will hold at the end of the game against any strategy
of Mini.
\item Mini has a strategy to ensure that $e(G) < \left(1 - C \log k/k \right)
\binom{n}{2}$ will hold at the end of the game against any strategy
of Max.
\end{enumerate}

We present a random strategy for Max, and show that with positive
probability (in fact, a.a.s.), $e(G) \geq \left(1 - C
\log k/k \right) \binom{n}{2}$ will hold at the end of the game.
Therefore, (b) cannot hold, which implies (a), thus proving
Theorem~\ref{th::Kcolorability}.

We will need the following Chernoff type bound for our calculations.
\begin{theorem} \cite{JLR} \label{th::Chernoff}
Let $X \sim Bin(n,p)$ and let $x \geq 7 n p$. Then $Pr(X \geq x)
\leq e^{-x}$.
\end{theorem}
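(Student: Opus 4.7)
The plan is to prove this by the standard exponential Markov (Chernoff) argument, followed by optimizing in the free parameter and verifying a single numerical inequality that the threshold constant $7$ is precisely chosen to make valid.

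First, I would apply Markov's inequality to $e^{tX}$ for a parameter $t > 0$ to be chosen later, obtaining $\Pr(X \geq x) \leq e^{-tx} \, \mathbb{E}[e^{tX}]$. Since $X \sim \text{Bin}(n,p)$, the moment generating function factorizes as $\mathbb{E}[e^{tX}] = (1-p+pe^t)^n$, and using $1+y \leq e^y$ with $y = p(e^t - 1)$ I get the clean bound $\mathbb{E}[e^{tX}] \leq \exp(np(e^t - 1))$. Combining, $\Pr(X \geq x) \leq \exp\!\bigl(np(e^t - 1) - tx\bigr)$ for every $t > 0$.

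Next, I would optimize by choosing $t = \ln(x/(np))$, which is positive because the hypothesis $x \geq 7np$ forces $x/(np) \geq 7 > 1$. Writing $\lambda := x/(np) \geq 7$, the exponent becomes $x - np - x\ln\lambda$, so
\[
\Pr(X \geq x) \;\leq\; \exp\!\bigl(x - np - x\ln\lambda\bigr) \;=\; \exp\!\Bigl(x\bigl(1 - \tfrac{1}{\lambda} - \ln\lambda\bigr)\Bigr),
\]
using $np = x/\lambda$.

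Finally, to conclude $\Pr(X \geq x) \leq e^{-x}$ it suffices to verify that $1 - \tfrac{1}{\lambda} - \ln\lambda \leq -1$, i.e. $\tfrac{1}{\lambda} + \ln \lambda \geq 2$, for every $\lambda \geq 7$. The function $h(\lambda) = \tfrac{1}{\lambda} + \ln \lambda$ has derivative $(\lambda-1)/\lambda^2 > 0$ for $\lambda > 1$, hence is increasing on $[7,\infty)$; so it is enough to check $h(7) = \tfrac{1}{7} + \ln 7 \geq 2$, which holds since $\ln 7 > 1.94$ and $1/7 > 0.14$. This gives the desired bound.

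The only real ``obstacle'' here is calibrating the threshold: a naive Chernoff calculation produces some bound of the form $e^{-c x}$ with $c$ depending on $\lambda$, and one must pick the smallest $\lambda$ for which $h(\lambda) \geq 2$. The constant $7$ in the hypothesis is exactly what makes this monotonicity check succeed, so the argument reduces, after the standard Chernoff step, to the one-variable inequality $h(\lambda) \geq 2$ on $[7,\infty)$.
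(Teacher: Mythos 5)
Your proof is correct: the exponential-moment/Markov step, the choice $t=\ln\lambda$ with $\lambda=x/(np)\geq 7$, and the reduction to the one-variable inequality $\ln\lambda+1/\lambda\geq 2$ on $[7,\infty)$ (verified at $\lambda=7$ by monotonicity) are all sound, and this is the standard route to this bound. Note that the paper does not prove this statement at all—it is quoted from the reference [JLR]—so there is no internal proof to compare against; the only (trivial) point your write-up glosses over is the degenerate case $np=0$, where $t=\ln(x/(np))$ is undefined but the claim holds immediately since then $\Pr(X\geq x)\in\{0,1\}$ with $x=0$ forced in the latter case.
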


The proposed strategy for Max is divided into the following two
stages:
\begin{description}
\item [Stage I:] This stage is over as soon as $\delta(G) \geq k-1$; at that point Max proceeds to Stage II. For every positive integer $i$, let $a_i b_i$ denote the edge claimed by Mini in her $i$th move of this stage and, immediately after Mini's $i$th move, let $S_i = \{x \in \{a_i, b_i\} : d_G(x) \leq k-2\}$. Max plays his $i$th move as follows:
\begin{description}
\item [(i)] If $S_i = \emptyset$, then Max claims a free edge $xy$ such that $\min \{d_G(x), d_G(y)\} \leq k-2$ uniformly at random among all such edges; we refer to such moves as being \emph{fully-random}.

\item [(ii)] If $S_i \neq \emptyset$, then, with probability $139/140$, Max makes a fully-random move and, with probability $1/140$, he claims an edge $xy$ such that $x \in S_i$ and subsequently $y \in \{z \in V(K_n) : xz \notin E(G)\}$ are chosen uniformly at random; we refer to such moves as being \emph{semi-random}.
\end{description}
\item [Stage II:] Throughout this stage, Max follows the trivial strategy.
\end{description}

Note that if $H$ is a graph with chromatic number $\chi(H) \leq k$
and $u, v \in V(H)$ are two vertices such that $d_H(u) \leq k-2$,
then $\chi(H + uv) \leq k$. It thus follows that Max can play according
to the proposed strategy.

Fix $r := C n \log k/k$ and let $U \subseteq V(K_n)$ be an arbitrary
set of size $r$. Assume that for some positive integer $i$, in her
$i$th move Mini claims an edge $a_i b_i$ such that $S_i \cap U =
\emptyset$. Max's $i$th move $xy$ is said to be \emph{bad with
respect to $U$} if it is semi-random and $\{x,y\} \cap U \neq
\emptyset$. The set $U$ is said to be \emph{bad} if throughout Stage
I Max makes at least $kr/20$ bad moves with respect to $U$. We claim
that a.a.s.\ there will be no bad sets. Indeed, fix some $U$ of size $r$
and let $B_U$ be the random variable which counts the number of bad
moves with respect to $U$. Throughout the game Mini can claim at
most $(k-2)n$ edges $a_i b_i$ for which $S_i \neq \emptyset$. Hence,
it follows by the proposed strategy that $B_U \sim Bin(N,p)$, where
$N \leq (k-2) n$ and $p \leq \frac{1}{140} \cdot \frac{r}{n - k +
1}$. In particular, $\mathbb{E}(B_U) = N p \leq k r/140$. Therefore,
by Theorem~\ref{th::Chernoff} we have
\begin{equation*}
Pr(U \textrm{ is bad}) = Pr(B_U \geq kr/20) \leq e^{- k r/20} \,.
\end{equation*}
Thus
\begin{equation*}
Pr(\textrm{there exists a bad set } U) \leq \binom{n}{r} e^{- k
r/20} \leq \left[\frac{e k}{C \log k} \cdot e^{- k/20} \right]^r =
o(1) \,.
\end{equation*}
Hence, from now on we will assume that there are no bad sets.

Our next goal is to prove that a.a.s.\ $\alpha(G) \leq C n \log k/k$ holds
at the end of the game. Let $U \subseteq
V(G)$ be an arbitrary set of size $r$. At any point during the game
let $X_U = \{x \in U : d_G(x) \geq k-1\}$ and let $Y_U = U \setminus
X_U$. Consider the point in time at which $|X_U| \geq |Y_U|$ first
occurs; clearly $|U|/2 \leq |X_U| \leq \lceil |U|/2 \rceil + 1$ at
this point. Note that such a moment must occur during Stage I, since
there are no vertices of degree at most $k-2$ at the end of Stage I;
denote this moment by $t$.

We say that a claimed edge $uv$ is a $Y$-edge if
$\{u, v\} \cap Y_U \neq \emptyset$ holds immediately after $uv$ is
claimed (by either player). Let $A_U$ denote the event: ``up until
the moment $t$, Max has played at least $k r/20$ fully-random moves
in which he claimed $Y$-edges" and let $A_U^c$ denote its
complement. Let $I_U$ denote the event: ``at the end of the game, $U$
is an independent set". Clearly $Pr(I_U) = Pr(I_U \wedge A_U) +
Pr(I_U \wedge A_U^c)$.

We wish to bound $Pr(I_U)$ from above. Consider first a fully-random
move $e$ such that $e$ is a $Y$-edge,
and assume that $U$ is independent immediately before this move. We
have
\begin{eqnarray} \label{eq::fullyRandomMove}
Pr(e \subseteq U) &\geq& \frac{|Y_U||U| - \binom{|Y_U|}{2}}{|Y_U|n}  \\
&\geq& \frac{|Y_U|\frac{|U|}{2}}{|Y_U|n} = \frac{r}{2n} \,.
\end{eqnarray}

It follows by~\eqref{eq::fullyRandomMove} and by the definition of
$A_U$ that
\begin{equation} \label{eq::fullyRandom}
Pr(I_U \wedge A_U) = Pr(I_U \; | \; A_U) \cdot Pr(A_U) \leq Pr(I_U
\; | \; A_U) \leq \left(1 - \frac{r}{2n} \right)^{k r/20} \,.
\end{equation}

Next, assume that for some positive integer $i$, in her $i$th move
Mini has claimed a $Y$-edge $a_i b_i$; assume without loss of generality that $a_i \in Y_U$. We again assume
that $U$ is independent immediately after Mini's $i$th move; in
particular, $b_i \in V(K_n) \setminus U$. According to the proposed
strategy, with probability $1/140$ in his subsequent move Max
claims an edge $xy$ such that $x \in \{a_i, b_i\}$ and $y \in
V(K_n)$. Moreover, $Pr(x = a_i) \geq 1/2$ and, based on the assumption that $U$ is
independent, $Pr(y \in U \; | \; x = a_i) \geq \frac{|U
\setminus \{a_i\}|}{n} \geq \frac{r-1}{n}$. Therefore
\begin{equation} \label{eq::semiRandomMove}
Pr(\{x,y\} \subseteq U) \geq 1/140 \cdot Pr(x = a_i) \cdot Pr(y \in
U \; | \; x = a_i) \geq \frac{r}{290n} \,.
\end{equation}

Since $X_U = \emptyset$ before the game starts, if $U$ is an
independent set at the end of the game, it follows that by time $t$
the number of $Y$-edges claimed by both players is at least $(k-2)
|X_U| \geq (k-2) r/2$. Since $U$ is not a bad set, if by time $t$
Max makes at most $kr/20$ fully-random moves in which he claims
$Y$-edges, then the number of $Y$-edges Mini must claim up to that
point is at least $[(k-2) r/2 - kr/20 - kr/20]/2 \geq kr/10$. It
thus follows by the proposed strategy, by~\eqref{eq::semiRandomMove}
and by the definition of $A_U$, that
\begin{equation} \label{eq::semiRandom}
Pr(I_U \wedge A_U^c) = Pr(I_U \; | \; A_U^c) \cdot Pr(A_U^c) \leq
Pr(I_U \; | \; A_U^c) \leq \left(1 - \frac{r}{290n} \right)^{k r/10}
\,.
\end{equation}

Putting inequalities~\eqref{eq::fullyRandom}
and~\eqref{eq::semiRandom} together we conclude that
\begin{eqnarray} \label{eq::oneSet}
Pr(I_U) &=& Pr(I_U \wedge A_U) + Pr(I_U \wedge A_U^c) \leq \left(1 - \frac{r}{2n} \right)^{k r/20} + \left(1 - \frac{r}{290n} \right)^{k r/10} \nonumber \\
&\leq& \exp \left\{- \frac{r}{2n} \cdot \frac{k r}{20} \right\} +
\exp \left\{- \frac{r}{290n} \cdot \frac{k r}{10} \right\} \leq \exp
\left\{- \frac{C^2 n \log^2 k}{3000 k} \right\}\,.
\end{eqnarray}

Using~\eqref{eq::oneSet}, we can now show that
a.a.s.\ $\alpha(G) \leq C n \log k/k$ holds at the end of the game by
the following union bound estimate:
\begin{eqnarray*} \label{eq::independenceNumber}
Pr(\alpha(G) \geq C n \log k/k) &\leq& \binom{n}{C n \log k/k} \cdot \exp \left\{- \frac{C^2 n \log^2 k}{3000 k} \right\}\\
&\leq& \left(\frac{e k}{C \log k} \right)^{C n \log k/k} \cdot \exp \left\{- \frac{C^2 n \log^2 k}{3000 k} \right\}\\
&\leq& \exp \left\{\frac{C n \log^2 k}{k} - \frac{C^2 n \log^2 k}{3000 k} \right\}\\
&=& o(1) \,,
\end{eqnarray*}
where the last equality holds for $C > 3000$. Since $\alpha(G) \leq
C n \log k/k$ holds a.a.s., Max has a deterministic strategy to
achieve this, as was shown above.

Once the game is over, $G$ is
saturated and thus complete $k$-partite; let $A_1, \ldots, A_k$
denote its parts. Max can ensure that $|A_i| \leq C n \log k/k$
holds for every $1 \leq i \leq k$, and therefore
$$
e(G) = \frac{1}{2} \sum_{i=1}^k |A_i| (n - |A_i|) \geq \frac{1}{2}
\sum_{i=1}^k |A_i| (n - C n \log k/k) = \frac{n^2}{2} (1 - C \log
k/k) \,.
$$
{\hfill $\Box$ \medskip\\}

\section{Matching games} \label{sec::matching}

In this section we study matching games, that is, saturation games
in which both players are required to keep the size of every
matching in the graph below a certain threshold.

\textbf{Proof of Theorem~\ref{th::perfectMatchingGame}} In order to
prove the theorem, we present a strategy for Max. In order to
simplify the description of the strategy, we first consider several
possible \emph{end-games}. These are described in the following
lemmas.

\begin{lemma} \label{lem::nMinus2cycle}
Let $n \geq 6$ be an even integer and let $G_0 = (V,E)$ be a graph
on $n$ vertices. Assume that there exist vertices $x,y \in V$ such
that $d_{G_0}(x) = d_{G_0}(y) = 0$ and $G_0 \setminus \{x,y\}$
admits a Hamilton cycle $C$. If Max is the second player, then
$s(G_0, \mathcal{PM}) \geq \binom{n-2}{2}$.
\end{lemma}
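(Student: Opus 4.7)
The plan is to exhibit a strategy for Max that forces the final graph $G$ to satisfy $e(G) \geq \binom{n-2}{2}$. Max's proposed strategy: On each of his turns, Max plays an arbitrary legal edge whose both endpoints lie in $W := V \setminus \{x,y\}$ if such an edge exists; otherwise, he plays any legal edge as a fallback.

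First I would record two basic facts that hold throughout the game. Since edges are never removed, $G[W]$ always contains the Hamilton cycle $C$; in particular, $G[W]$ has a perfect matching at every moment (as $n - 2$ is even). Consequently, $G \cup \{xy\}$ always admits a perfect matching (match $x$ to $y$ and extend by a PM of $G[W]$), so the edge $xy$ is never legal, and $xy \notin E(G)$ at termination. Moreover, for every non-edge $uv \in \binom{W}{2}$ which is illegal at some moment of the game, $G - u - v$ has a PM, which must match $x$ and $y$ to vertices in $W \setminus \{u, v\}$; in particular, once a $W$-non-edge becomes illegal, both $x$ and $y$ already have positive degree.

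For the main argument, I would track the game until the first round $T$ in which Max cannot play a legal $W$-edge. Up to round $T - 1$ Max has contributed only $W$-edges, and after round $T$ the induced subgraph $G[W]$ no longer changes: indeed, illegal $W$-non-edges stay illegal, since adding further edges can only create more perfect matchings. Hence from round $T$ onwards only edges incident to $\{x, y\}$ are played (by either player). Using $xy \notin E(G)$, the final edge count decomposes as $e(G) = e(G[W]) + d_G(x) + d_G(y)$, so it suffices to show that $d_G(x) + d_G(y) \geq \binom{n-2}{2} - e(G[W])$ at termination. The proof would combine (a) the fact that Max's $W$-edges have been accumulating during rounds $1, \ldots, T - 1$, and (b) the saturation conditions at termination, which imply strong structural constraints on the missing $W$-edges.

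The main obstacle is the accounting in the final step. For each missing $W$-edge $uv$ at termination, saturation provides a PM of $G - u - v$ with distinct witnesses $a_{uv} \in N_G(x) \cap (W \setminus \{u,v\})$ and $b_{uv} \in N_G(y) \cap (W \setminus \{u, v, a_{uv}\})$; the challenge is to extract from these witnesses enough distinct edges incident to $\{x, y\}$ to match the deficit $\binom{n-2}{2} - e(G[W])$. I expect this to require a case distinction based on the sizes of $N_G(x) \cap W$ and $N_G(y) \cap W$: when these sets are small (e.g.\ of size $1$), one shows directly that every missing $W$-non-edge is forced to be incident to the unique neighbour, forcing $G[W]$ to be complete; when they are larger, one uses the growth of $d(x) + d(y)$ after round $T$ together with the persistence of illegal $W$-non-edges to close the gap.
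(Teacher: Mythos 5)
Your strategy for Max differs from the paper's in a crucial way, and the gap you flag in your last paragraph is not an accounting detail but the heart of the matter.

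The paper's Max does not greedily play $W$-edges. He plays $W$-edges only as long as Mini does; the moment Mini claims an edge $uv$ with $u \in \{x,y\}$ and $v \in W$, Max immediately responds with $uv'$, where $v'$ is a neighbour of $v$ on the Hamilton cycle $C$. This single reactive move replaces the edge $vv'$ of $C$ by the path $v\!-\!u\!-\!v'$ and thereby produces a Hamilton cycle on $V \setminus \{z\}$, where $z$ is the other vertex of $\{x,y\}$. Two consequences follow instantly: $z$ can never acquire a neighbour (adding $zw$ would create a perfect matching, since the Hamilton cycle on the odd set $V\setminus\{z\}$ minus $w$ is a path with a perfect matching), and every missing edge inside $V \setminus \{z\}$ stays legal for the rest of the game because $z$ remains isolated. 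Hence at termination $G[V\setminus\{z\}]$ is complete. If Mini never touches $\{x,y\}$, both players spend all their moves inside $W$ and $G[W]$ becomes complete. In both cases $e(G)\ge\binom{n-2}{2}$ with essentially no counting required.

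Your Max, by contrast, keeps playing $W$-edges regardless of what Mini does, and falls back to an arbitrary legal edge only when no $W$-edge remains legal. Because Mini can interleave edges to $x$ and to $y$, she can make some $W$-non-edges illegal well before $G[W]$ is dense (as soon as $x$ and $y$ both have a neighbour, a $W$-non-edge $uv$ is illegal whenever $G[W]-a-b+uv$ has a perfect matching for some $a\in N(x)$, $b\in N(y)$). Your observations --- $xy$ is never legal, illegality of $W$-non-edges is monotone, and both $x,y$ have positive degree once a $W$-non-edge is illegal --- are all correct, but they give only $d(x)+d(y)\ge 2$, which is nowhere near the deficit $\binom{n-2}{2}-e(G[W])$ you would need to cover. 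You acknowledge this and sketch a case distinction on $|N(x)\cap W|$ and $|N(y)\cap W|$, but you do not carry it out, and it is not clear that it can be carried out: the witnesses $a_{uv},b_{uv}$ for different missing $W$-edges $uv$ can coincide massively, so the number of missing $W$-edges can in principle be quadratic while $d(x)+d(y)$ is only linear. Without some structural input beyond these local observations the inequality does not follow. The clean fix is exactly the paper's reactive move: it removes the need to balance $e(G[W])$ against $d(x)+d(y)$ by forcing one of $G[W]$ or $G[V\setminus\{z\}]$ to be complete outright.
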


\begin{proof}
Max plays according to the following simple strategy which consists
of two stages.
\begin{description}
\item [Stage I:] Let $uv$ denote the last edge claimed by Mini; we distinguish between the following two cases:
\begin{description}
\item [(1)] If $\{u,v\} \cap \{x,y\} = \emptyset$, then Max claims an arbitrary free edge $w w'$ such that $\{w,w'\} \cap \{x,y\} = \emptyset$ and repeats Stage I; if this is not possible, then he skips to Stage II.
\item [(2)] Otherwise, if $u \in \{x,y\}$ and $v \in V \setminus \{x,y\}$, then Max claims a free edge $u v'$, where $v'$ is a neighbor of $v$ in $C$. He then proceeds to Stage II.
\end{description}
\item [Stage II:] Throughout this stage, Max follows the trivial strategy.
\end{description}

Note that at any point during the game, the graph $G \cup \{xy\}$
admits a perfect matching; it follows that $xy \notin E(G)$. In
particular, the proposed strategy does account for every legal move
of Mini. Moreover, if Max never plays according to Case (2) of Stage
I, then clearly $w w' \in E(G)$ holds for every $w, w' \in V
\setminus \{x,y\}$ at the end of the game. If on the other hand Max
does play according to Case (2) of Stage I, then, at the end of the
game, $w w' \in E(G)$ holds for every $w, w' \in V \setminus \{z\}$
for some $z \in \{x,y\}$. In either case we conclude that $s(G_0,
\mathcal{PM}) \geq \binom{n-2}{2}$ as claimed.
\end{proof}

\begin{lemma} \label{lem::outsideEdge}
Let $n \geq 6$ be an even integer and let $G_0 = (V,E)$ be a graph
on $n$ vertices. Assume that there exist vertices $x,y,z \in V$ such
that $xy \in E$, $d_{G_0}(x) = d_{G_0}(y) = 1$, $d_{G_0}(z) = 0$ and
$G_0 \setminus \{x,y,z\}$ admits a Hamilton cycle $C$. If Max is the
second player, then $s(G_0, \mathcal{PM}) \geq \binom{n-3}{2}$.
\end{lemma}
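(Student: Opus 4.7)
The plan is to present a strategy for Max in the spirit of the proof of Lemma~\ref{lem::nMinus2cycle}, consisting of a single tactical first move followed by the trivial strategy. Write $V' := V \setminus \{x,y,z\}$. The key preliminary observation is that every edge $zv$ with $v \in V'$ is illegal throughout the game: $G_0 \cup \{zv\}$ already admits the perfect matching $\{xy,\, zv\} \cup M$, where $M$ is the perfect matching of the Hamilton path $C - v$ (which has the even number $n-4$ of vertices). Since perfect matchings persist under edge addition, $zv$ stays illegal forever, so $z$ can acquire at most one neighbor during the entire game, and that neighbor must lie in $\{x,y\}$.

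Max's strategy is as follows: if Mini's first move is $yz$, Max responds with $yv^{*}$ for an arbitrary $v^{*} \in V'$; otherwise Max responds with $xu^{*}$ for an arbitrary $u^{*} \in V'$ not incident to Mini's move. A short matching-by-cases check shows that this response is legal in each of the five possible forms of Mini's first move ($xz$, $yz$, $xv$, $yv$, or an edge inside $V'$): in each case one tries each pairing matching the new edge and observes that one of the vertices in $\{x,y,z\}$ is always left without an available neighbor. From his second move onwards, Max follows the trivial strategy.

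I would then split the subsequent analysis by the form of Mini's first move. In the ``lucky'' sub-case when it is $yv$ with $v \in V'$, after Max's $xu^{*}$ both $x$ and $y$ have neighbors in $V'$; a direct matching argument shows that from this point on both $xz$ and $yz$ are permanently illegal (each, matched with the $V'$-neighbor of the opposite vertex, extends via the Hamilton-path PM of $V'$ minus a vertex to a perfect matching). Combined with the permanent illegality of $zv$, the vertex $z$ stays isolated for the whole game, so no free edge outside $z$ can ever become illegal, and the game ends only when $V \setminus \{z\}$ induces $K_{n-1}$; this gives $e(G) \geq \binom{n-1}{2}$. In all remaining sub-cases only one of $\{xz, yz\}$ has been killed by Max's first move, and sooner or later $z$ acquires a unique neighbor $w \in \{x,y\}$ (either Mini played $wz$ right away, or she will eventually play it to shorten the game). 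Once this happens, I would argue that the opposite vertex in $\{x,y\}$ is frozen with degree exactly $1$: any new edge $e$ incident to it, together with $wz$ and the Hamilton-path PM of $V'$ minus a vertex, would complete to a perfect matching of $G \cup \{e\}$. Hence the only edges that remain legal are those inside $V'$ and those between $w$ and $V'$, and the game ends precisely when $V' \cup \{w\}$ induces $K_{n-2}$, yielding $e(G) \geq \binom{n-2}{2}+2 = \binom{n-3}{2}+(n-1)$.

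In both regimes the final edge count safely exceeds $\binom{n-3}{2}$. The main obstacle is the case-by-case bookkeeping of edge legality; this is kept tractable by the sharp dichotomy ``which of $x$ and $y$ already possesses a $V'$-neighbor,'' which, together with the permanent illegality of $zv$ for $v \in V'$, completely dictates the legal-move set throughout the game. Since $n \geq 8$ gives $|V'| = n - 3 \geq 5$, the choice of the auxiliary vertex $u^{*}$ or $v^{*}$ is never obstructed.
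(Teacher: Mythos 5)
Your strategy is genuinely different from the paper's. The paper has Max passively \emph{mirror} Mini inside $V' := V \setminus \{x,y,z\}$ for as long as she stays there, and make a single tactical move only in direct response to Mini first touching $\{x,y,z\}$; you have Max make a preemptive tactical move on move one and then play trivially from then on. The key preliminary observation (that $zv$ is permanently illegal for every $v \in V'$, via the matching $\{xy, zv\}$ together with a perfect matching of the even Hamilton path $C - v$) is correct and is the same one the paper uses, and your legality check of Max's first move in all five forms of Mini's opening is sound.

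The genuine gap is the sentence ``sooner or later $z$ acquires a unique neighbor $w \in \{x,y\}$ (either Mini played $wz$ right away, or she will eventually play it to shorten the game).'' A lower bound on the score must hold against \emph{every} Mini strategy, so you may not presuppose that Mini ever claims $xz$; and once Max is playing trivially, he has no control over whether $xz$ is claimed before it becomes illegal. The fix is immediate: if $xz$ is never claimed then $z$ stays isolated for the whole game, in which case no free edge inside $V \setminus \{z\}$ can ever become illegal and the game ends with $V \setminus \{z\}$ a clique --- this is exactly the argument you already give in the ``lucky'' sub-case, and it yields $\binom{n-1}{2} \geq \binom{n-3}{2}$ edges. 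So the non-lucky sub-cases should be organized as a dichotomy (``$xz$ is claimed at some point while still legal'' vs.\ ``$xz$ is never claimed''), rather than by appeal to Mini's intentions. One further slip: the closing sentence invokes ``$n \geq 8$ gives $|V'| \geq 5$,'' but the lemma only assumes $n \geq 6$; in fact $|V'| = n-3 \geq 3$ is enough to choose $u^*$ or $v^*$ not incident to Mini's move, and the parity facts used (that $n-4$ is even) need only $n$ even. With these repairs your proof goes through.
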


\begin{proof}
Max plays according to the following simple strategy which consists
of two stages.
\begin{description}
\item [Stage I:] Let $uv$ denote the last edge claimed by Mini; we distinguish between the following three cases:
\begin{description}
\item [(1)] If $\{u,v\} \cap \{x,y,z\} = \emptyset$, then Max claims an arbitrary free edge $w w'$ such that $\{w,w'\} \cap \{x,y,z\} = \emptyset$ and repeats Stage I; if this is not possible, then he skips to Stage II.
\item [(2)] Otherwise, if $uv = xz$ (respectively $uv = yz$), then Max claims $yz$ (respectively $xz$) and proceeds to Stage II.
\item [(3)] Otherwise, if $u \in \{x, y\}$ and $v \in V \setminus \{x,y,z\}$, then Max claims a free edge $u' v'$, where $u'$ is the unique vertex in $\{x, y\} \setminus \{u\}$ and $v'$ is a neighbor of $v$ in $C$. He then proceeds to Stage II.
\end{description}
\item [Stage II:] Throughout this stage, Max follows the trivial strategy.
\end{description}

Note that at any point during the game, for every $w \in V \setminus
\{x,y,z\}$, the graph $G \cup \{wz\}$ admits a perfect matching; it
follows that $wz \notin E(G)$. In particular, the proposed strategy
does account for every legal move of Mini. Moreover, note that if
$\{xz, yz\} \subseteq E(G)$, then $w w' \notin E(G)$ for every $w
\in \{x,y,z\}$ and $w' \in V \setminus \{x,y,z\}$. Therefore, if Max
never plays according to Case (3) of Stage I, then $w w' \in E(G)$
holds for every $w, w' \in V \setminus \{x,y,z\}$ at the end of the
game. If on the other hand Max does play according to Case (3) of
Stage I, then $w w' \in E(G)$ holds for every $w, w' \in V \setminus
\{z\}$ at the end of the game. In either case we conclude that
$s(G_0, \mathcal{PM}) \geq \binom{n-3}{2}$ as claimed.
\end{proof}

\begin{lemma} \label{lem::hangingEdge}
Let $n \geq 6$ be an even integer and let $G_0 = (V,E)$ be a graph
on $n$ vertices. Assume that there exist vertices $w,x,y,z \in V$
such that $wx \in E$, $d_{G_0}(x) = 1$, $d_{G_0}(y) = d_{G_0}(z) =
0$ and $G_0 \setminus \{x,y,z\}$ admits a Hamilton cycle $C$. If Max
is the second player, then $s(G_0, \mathcal{PM}) \geq
\binom{n-2}{2}$.
\end{lemma}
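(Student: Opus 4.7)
I plan to follow the two-stage pattern of Lemmas~\ref{lem::nMinus2cycle} and~\ref{lem::outsideEdge}, using the vertex $x$ itself as a binder for $y$ or $z$; this exploits the fact that $wx$ is a pendant attached to the cycle $C$. Call an edge \emph{inside} if both of its endpoints lie in $V \setminus \{y, z\}$. The edge $yz$ is never legal at any point during the game, since $G_0 \cup \{yz\}$ already contains the perfect matching $\{yz, wx\} \cup M$, where $M$ is a perfect matching of the even-length path $C \setminus \{w\}$ on $n-4$ vertices; so $y$ and $z$ remain non-adjacent throughout.

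Max's Stage I strategy. Let $uv$ denote Mini's last move.
\begin{description}
\item [Case (1):] $\{u, v\} \cap \{y, z\} = \emptyset$. Max claims an arbitrary free inside edge and repeats Stage I; if none exists, he moves to Stage II. Since $y$ and $z$ are isolated at this point, the move is legal.
\item [Case (2):] $\{u, v\} = \{x, y\}$ (respectively $\{x, z\}$). Max claims $xz$ (respectively $xy$) and moves to Stage II.
\item [Case (3):] $u \in \{y, z\}$ and $v \in V(C) := V \setminus \{x, y, z\}$. Max claims $xu$ and moves to Stage II.
\end{description}
In Stage II, Max follows the trivial strategy.

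The crux of the proof is a \emph{sealing claim}: immediately after Max's Case (2) move, every non-edge incident to $y$ or $z$ is illegal; immediately after his Case (3) move with $u = y$ (respectively $u = z$), every non-edge incident to $z$ (respectively $y$) is illegal. By the Berge--Tutte characterisation this reduces, for each proposed edge $ze$, to exhibiting a perfect matching of $G[V \setminus \{z, e\}]$. In Case (3) with $u = y$, such a matching is produced by pairing $x$ with $y$ via the new edge $xy$ (or pairing $y$ with $v$ when $e = x$), and then using the natural perfect matching of the $(n-4)$-vertex even-length path obtained by deleting a single suitable vertex from $C$; Case (2) is analogous with $xz$ in place of $xy$. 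Since $y$ or $z$ then remains isolated for the rest of the game, every inside edge stays legal in Stage II, and the trivial strategy eventually completes $V \setminus \{y, z\}$ to $K_{n-2}$ (in Case (2)) or allows $V \setminus \{z\}$ to be completed to $K_{n-1}$ (in Case (3)), yielding final counts of at least $\binom{n-2}{2} + 2$ and $\binom{n-1}{2}$ respectively. If Case (2) or (3) never triggers, repeated application of Case (1) saturates the inside to $K_{n-2}$ on its own, already giving $\binom{n-2}{2}$ edges.

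The main technical point is verifying the sealing claim at the triggering move; this is clean because $n$ is even, so deleting a single vertex from the cycle $C$ on $n-3$ vertices leaves a path on $n-4$ vertices which always has a perfect matching. Once sealing holds at the triggering move it persists throughout Stage II, because enlarging $G$ can only enlarge the set of perfect matchings of induced subgraphs. In every branch the final graph therefore has at least $\binom{n-2}{2}$ edges, which proves the lemma.
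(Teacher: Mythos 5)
Your proof is correct and follows essentially the same two-stage strategy as the paper: Max claims edges disjoint from $\{y,z\}$ until Mini touches one of them, then makes a single sealing move (exploiting the parity of $n$ and the Hamilton cycle $C$ to force $y$ or $z$ to stay low-degree) and finishes with the trivial strategy. The only inessential deviation is in the subcase where Mini claims $xy$ or $xz$: you have Max reply with $xz$ (resp.\ $xy$), pinning both $y$ and $z$ at degree one to $x$, while the paper's strategy instead connects the touched vertex to the cycle and leaves the other of $y,z$ isolated — both are valid sealing moves yielding at least $\binom{n-2}{2}$ edges.
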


\begin{proof}
Max plays according to the following simple strategy which consists
of two stages.
\begin{description}
\item [Stage I:] Let $uv$ denote the last edge claimed by Mini; we distinguish between the following three cases:
\begin{description}
\item [(1)] If $\{u,v\} \cap \{y,z\} = \emptyset$, then Max claims an arbitrary free edge $a b$ such that $\{a, b\} \cap \{y,z\} = \emptyset$ and repeats Stage I; if this is not possible, then he skips to Stage II.
\item [(2)] Otherwise, if $u \in \{y,z\}$ and $v \in V \setminus \{x,y,z\}$, then Max claims $ux$ and proceeds to Stage II.
\item [(3)] Otherwise, if $u \in \{y,z\}$ and $v = x$, then Max claims an arbitrary free edge $u u'$, where $u' \in V \setminus \{x,y,z\}$. He then proceeds to Stage II.
\end{description}
\item [Stage II:] Throughout this stage, Max follows the trivial strategy.
\end{description}

Note that at any point during the game, the graph $G \cup \{yz\}$
admits a perfect matching; it follows that $yz \notin E(G)$. In
particular, the proposed strategy does account for every legal move
of Mini. Moreover, if Max never plays according to Cases (2) and (3)
of Stage I, then clearly $w w' \in E(G)$ holds for every $w, w' \in
V \setminus \{y,z\}$ at the end of the game. If on the other hand
Max does play according to Cases (2) or (3) of Stage I, then without loss of generality $xy \in E(G)$ (otherwise $xz \in E(G)$ and the
proof can be completed by an analogous argument). In these cases,
Max claims an edge and immediately proceeds to Stage II. Note that
starting from that point and until the end of the game, $G \setminus
\{z,t\}$ admits a perfect matching for every $t \in V$. Hence
$d_G(z) = 0$, and it follows that $w w' \in E(G)$ holds for every
$w, w' \in V \setminus \{z\}$ at the end of the game. In either case
we conclude that $s(G_0, \mathcal{PM}) \geq \binom{n-2}{2}$ as
claimed.
\end{proof}

\begin{lemma} \label{lem::triangleVertex}
Let $n \geq 8$ be an even integer and let $G_0 = (V,E)$ be a graph
on $n$ vertices. Assume that there exist vertices $w_1, w_2, w_3,
w_4 \in V$ such that $G_0[\{w_1, w_2, w_3\}] \cong K_3$,
$d_{G_0}(w_1) = d_{G_0}(w_2) = d_{G_0}(w_3) = 2$, $d_{G_0}(w_4) = 0$
and $G_0 \setminus \{w_1, w_2, w_3, w_4\}$ admits a Hamilton cycle
$C$. If Max is the second player, then $s(G_0, \mathcal{PM}) \geq
\binom{n-4}{2}$.
\end{lemma}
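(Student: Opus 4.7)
The plan is to mirror the strategies of Lemmas~\ref{lem::nMinus2cycle}, \ref{lem::outsideEdge} and \ref{lem::hangingEdge}: give Max a two-stage strategy that channels essentially all of his moves into the set $V' := V \setminus \{w_1, w_2, w_3, w_4\}$, so that by the end of the game $V'$ has become a clique and hence $e(G) \geq \binom{n-4}{2}$.

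The key structural feature I would exploit is that $G_0$ admits two independent Tutte witnesses to the absence of a perfect matching: the isolated vertex $w_4$, and the isolated triangle on $\{w_1,w_2,w_3\}$. Each of these alone gives an odd component of $G - \emptyset$ and so certifies $o(G-\emptyset) \geq 1 > |\emptyset|$. Crucially, any edge added inside $V'$ preserves both witnesses, so while Mini has not broken both of them Max can safely claim any free edge in $V'$. Stage~I of Max's strategy is therefore: after Mini's last move $uv$, Max claims an arbitrary free edge $ab \subseteq V'$ whenever at least one of the two witnesses is still intact (and repeats Stage~I); he proceeds to Stage~II either when no free edge inside $V'$ exists, or, if Mini has just destroyed the second witness, after first playing one specific coupling edge designed along the lines of Cases (2)--(3) of the proof of Lemma~\ref{lem::hangingEdge}. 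In Stage~II Max follows the trivial strategy.

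Two things then need to be verified. First, legality: while either witness is intact, Max's move is legal by the observation above, and the coupling edge used when the last witness is broken will be chosen so as to keep an alternative Tutte obstruction alive (for instance by creating a new odd component after the deletion of a small cut set involving the connection points of the $w_j$'s). Secondly, the terminal count: every Stage~I move by Max adds an edge inside $V'$, and so do all of Mini's moves that avoid $\{w_1,w_2,w_3,w_4\}$; so once Stage~I ends with $V'$ a clique, the bound $e(G) \geq \binom{n-4}{2}$ is in hand. A short case analysis on which subset of $\{w_1, w_2, w_3\}$ already has outside edges at the moment the second witness is broken then completes the argument.

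The main obstacle I anticipate is the moment at which Mini breaks the second Tutte witness. Her move $w_4 v^{(4)}$ (say), following some prior $w_i v_i$ on the board, is legal only if for every $i$ with an outside edge $w_i v_i$ already present the subgraph $G[V'] - \{v_i, v^{(4)}\}$ contains no perfect matching; this strong constraint on the current $V'$-structure is exactly what I plan to leverage in order to design Max's coupling response so that no perfect matching appears after it and so that in the subsequent trivial Stage~II all the remaining non-edges inside $V'$ must still be claimed before the game can terminate. This is the natural analogue of the ``$xy \in E(G)$ versus $xz \in E(G)$'' case distinction used in the proof of Lemma~\ref{lem::hangingEdge}, made slightly richer by the presence of the triangle on $\{w_1, w_2, w_3\}$.
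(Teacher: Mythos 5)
Your high-level plan — channel Max's moves into $V' := V \setminus \{w_1,w_2,w_3,w_4\}$ and lean on the two parallel Tutte obstructions given by the isolated vertex $w_4$ and the isolated triangle $\{w_1,w_2,w_3\}$ — is indeed the same intuition that drives the paper's proof. However, there is a genuine gap: the paper has Max react \emph{immediately}, the first time Mini touches $\{w_1,w_2,w_3,w_4\}$, whereas your Max keeps ``stalling'' inside $V'$ as long as one witness survives, and only reacts when Mini finally attacks the second witness. This single design choice is not a cosmetic simplification; it removes the specific move that makes the paper's analysis close.

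Concretely, the paper's reply to Mini's first touch $w_4 v$ (resp.\ $w_i v$) is the edge $w_4 v'$ (resp.\ $w_j v'$, $j \ne i$) with $vv' \in C$. Splicing $w_4$ (resp.\ the triangle, via the path $v\,w_i\,w_\ell\,w_j\,v'$) into the Hamilton cycle $C$ yields a Hamilton cycle on an \emph{odd} vertex set; from then on \emph{no} edge leaving the remaining witness is ever legal, because such an edge plus a Hamilton-path matching of the odd cycle minus one vertex produces a perfect matching. It is this permanence that lets the trivial Stage~II finish the job. Your delayed response forfeits exactly this: while one witness is intact, Mini may legally give $w_4$ several pairwise non-adjacent neighbours on $C$, or attach $w_1$ and $w_2$ to non-adjacent cycle vertices $v_1,v_2$; once the second witness is attacked, the ``replace one cycle edge'' reply is gone. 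For example, after Mini plays $w_1v_1$ and then $w_4v_1$ (both legal under your stall), the analogue $w_4v_1'$ with $v_1v_1' \in C$ is now \emph{illegal}, since $w_2w_3,\,w_1v_1,\,w_4v_1'$ extends via the Hamilton path of $C-\{v_1,v_1'\}$ to a perfect matching. A usable coupling edge does exist in this particular case (e.g.\ $w_2v_1$), but it is of a different form and must be found and verified anew for each of the many configurations reachable under your stall; the ``short case analysis'' you appeal to is neither short nor exhibited. The proposal therefore identifies the right obstruction but misses the key odd-cycle-completion response (and its immediacy) that makes the proof actually go through.
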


\begin{proof}
Max plays according to the following simple strategy which consists
of two stages.
\begin{description}
\item [Stage I:] Let $uv$ denote the last edge claimed by Mini; we distinguish between the following three cases:
\begin{description}
\item [(1)] If $\{u,v\} \cap \{w_1, w_2, w_3, w_4\} = \emptyset$, then Max claims an arbitrary free edge $xy$ such that $\{x, y\} \cap \{w_1, w_2, w_3, w_4\} = \emptyset$ and repeats Stage I; if this is not possible, then he proceeds to Stage II.
\item [(2)] Otherwise, if $u = w_4$ and $v \in V \setminus \{w_1, w_2, w_3, w_4\}$, then Max claims $u v'$, where $v'$ is a neighbor of $v$ in $C$. He then proceeds to Stage II.
\item [(3)] Otherwise, if $u \in \{w_1, w_2, w_3\}$ and $v \in V \setminus \{w_1, w_2, w_3, w_4\}$, then Max claims a free edge $u' v'$, where $u' \in \{w_1, w_2, w_3\} \setminus \{u\}$ and $v'$ is a neighbor of $v$ in $C$. He then proceeds to Stage II.
\end{description}
\item [Stage II:] Throughout this stage, Max follows the trivial strategy.
\end{description}

Note that at any point during the game, the graph $G \cup \{w_i
w_4\}$ admits a perfect matching for every $1 \leq i \leq 3$; it
follows that $w_i w_4 \notin E(G)$. In particular, the proposed
strategy does account for every legal move of Mini. Moreover, if Max
proceeds from Case (1) to Stage II, then clearly $xy \in E(G)$ holds
for every $x, y \in V \setminus \{w_1, w_2, w_3, w_4\}$ at the end
of the game. Similarly, if Max proceeds from Case (2) to Stage II,
then $xy \in E(G)$ holds for every $x, y \in V \setminus \{w_1, w_2,
w_3\}$ at the end of the game. Finally, if Max proceeds from Case
(3) to Stage II, then $xy \in E(G)$ holds for every $x, y \in V
\setminus \{w_4\}$ at the end of the game. In either case we
conclude that $s(G_0, \mathcal{PM}) \geq \binom{n-4}{2}$ as claimed.
\end{proof}

\begin{lemma} \label{lem::edge2vertices}
Let $n \geq 8$ be an even integer and let $G_0 = (V,E)$ be a graph
on $n$ vertices. Assume that there exist vertices $w_1, w_2, w_3,
w_4 \in V$ such that $w_3 w_4 \in E$, $d_{G_0}(w_1) = d_{G_0}(w_2) =
0$, $d_{G_0}(w_3) = d_{G_0}(w_4) = 1$ and $G_0 \setminus \{w_1, w_2,
w_3, w_4\}$ admits a Hamilton cycle $C$. If Max is the second player,
then $s(G_0, \mathcal{PM}) \geq \binom{n-4}{2}$.
\end{lemma}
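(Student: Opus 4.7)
The plan is to follow the template of Lemmas~\ref{lem::nMinus2cycle}--\ref{lem::triangleVertex}: Max plays a two-stage strategy in which Stage~I mirrors Mini's moves disjoint from the exceptional set $W:=\{w_1,w_2,w_3,w_4\}$, and any move of Mini touching $W$ triggers a specific reply followed by an immediate transition to Stage~II (the trivial strategy). More precisely, if Mini's move $uv$ satisfies $\{u,v\}\cap W=\emptyset$ (Case~1), Max also plays an arbitrary free edge disjoint from $W$ and repeats Stage~I, skipping to Stage~II if no such edge remains. Otherwise I would split Mini's ``unsafe'' moves into three subcases:
\begin{description}
\item[Case~2] Mini plays $w_i v$ with $w_i\in\{w_1,w_2\}$ and $v\notin W$: Max answers with $w_{3-i} v$.
\item[Case~3] Mini plays $w_i v$ with $w_i\in\{w_3,w_4\}$ and $v\notin W$: Max answers with $w_{7-i} v'$, where $v'$ is a neighbour of $v$ on $C$.
\item[Case~4] Mini plays $w_i w_j$ with $w_i\in\{w_1,w_2\}$ and $w_j\in\{w_3,w_4\}$: Max answers with $w_{3-i} w_j$.
\end{description}

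The key invariant used throughout is that $w_1 w_2\notin E(G)$, since $\{w_1 w_2,w_3 w_4\}$ combined with any perfect matching of $C$ (which exists because $C$ is an even cycle of length $n-4$) forms a perfect matching of $G\cup\{w_1 w_2\}$. From this I would derive legality of Max's responses: in Cases~2 and~4 the two vertices $w_1,w_2$ share a single common neighbour after Max's move, so any matching can cover at most one of them; in Case~3 both $w_1$ and $w_2$ remain isolated and therefore supply two disjoint singleton odd components in the Tutte--Berge witness. In each case no perfect matching of $G$ is possible, so Max's move is legal.

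The last step is to show that at the end of the game $e(G)\ge\binom{n-4}{2}$. In the Case~1 chain the stage only ends once every edge of $\binom{V\setminus W}{2}$ has been played, so the bound is immediate. In each of Cases~2, 3 and~4 one applies the saturation property instead: if an outside non-edge $xy$ remained at the end, then $G\cup\{xy\}$ would admit a perfect matching that must cover both $w_1$ and $w_2$ through vertices of their (possibly enlarged) neighbourhoods, and one then checks that the structural invariants Max maintains---a pair of twins $N(w_1)=N(w_2)$ in Cases~2 and~4, and a pair of isolated singletons $\{w_1\},\{w_2\}$ in Case~3---obstruct the existence of such a matching, forcing $xy\in E(G)$. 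The main obstacle is Case~3: one must track the interplay between Mini's Stage~II moves that attempt to attach $w_1$ or $w_2$ to the outside and Max's freedom to densify $V\setminus W$ via trivial outside moves; here a parity argument on the even cycle $C$ (removing the adjacent pair $v\sim v'$ from $C$ leaves a single path of odd length $n-6$, which has no perfect matching and thereby forbids the attacking moves of Mini before she can break both singletons $\{w_1\},\{w_2\}$) should close the analysis.
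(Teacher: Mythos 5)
Your high-level plan (mirror Mini's safe moves in Case~1, and classify her first ``unsafe'' move into three subcases with a tailored reply) matches the paper's, and your Case~3 reply coincides with the paper's. However, there is a genuine gap in what happens \emph{after} the reply. You claim that Max then switches to the trivial strategy and that this preserves structural invariants (twins $N(w_1)=N(w_2)$ in Cases~2 and~4, isolated singletons in Case~3) until the end of the game. The trivial strategy, by definition, plays an arbitrary legal edge; it does not maintain any invariant. Once Mini adds a further edge such as $w_1u$ or $w_2u$, trivial Max will not reply with the mirroring edge, so the twin property (and the isolated-singleton property) is lost, and your saturation argument collapses. This is precisely why the paper does \emph{not} go to the trivial strategy after the reply: in Case~2 Max instead switches to the strategy of Lemma~\ref{lem::outsideEdge}, in Case~3 to Lemma~\ref{lem::nMinus2cycle}, and in Case~4 to Lemma~\ref{lem::triangleVertex}, and each of those sub-strategies involves further careful responses to Mini's moves touching the distinguished vertices. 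Indeed, without that care the conclusion can fail: $\mathcal{PM}$-saturated graphs can have as few as $\Theta(n^{3/2})$ edges, so ``saturated'' alone is nowhere near the claimed $\binom{n-4}{2}$ bound, and one needs to prove Mini cannot steer toward a balanced Tutte structure with a larger cut and several moderate-sized odd cliques. Your sketch of a ``parity argument on the even cycle'' for Case~3 is far from closing this; it only rules out one specific attacking move, not the many ways Mini can exploit an uncontrolled Max.

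There is a secondary, more benign, difference: your response moves in Cases~2 and~4 are not the paper's. The paper's Case~2 reply $uv'$ (with $v'$ a cycle-neighbour of $v$) reinserts the touched $w$-vertex into a Hamilton cycle of the remaining $n-3$ vertices, which is exactly the hypothesis of Lemma~\ref{lem::outsideEdge}; similarly, the paper's Case~4 reply $uv'$ forms the triangle needed by Lemma~\ref{lem::triangleVertex}. Your replies $w_{3-i}v$ and $w_{3-i}w_j$ produce different configurations (a shared neighbour, resp.\ a degree-$3$ hub $w_j$ with three pendants) that do not match any of the earlier lemmas' hypotheses, so even if you wanted to hand off to a known sub-strategy you could not. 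If you want to pursue your alternative replies you would have to formulate and prove new end-game lemmas for those configurations; as written, the proof has a missing step exactly where the paper's reduction to Lemmas~\ref{lem::nMinus2cycle}--\ref{lem::triangleVertex} does the real work.
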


\begin{proof}
Max plays according to the following simple strategy which consists
of two stages.
\begin{description}
\item [Stage I:] Let $uv$ denote the last edge claimed by Mini; we distinguish between the following four cases:
\begin{description}
\item [(1)] If $\{u,v\} \cap \{w_1, w_2, w_3, w_4\} = \emptyset$, then Max claims an arbitrary free edge $xy$ such that $\{x, y\} \cap \{w_1, w_2, w_3, w_4\} = \emptyset$ and repeats Stage I; if this is not possible, then he proceeds to Stage II.
\item [(2)] Otherwise, if $u \in \{w_1, w_2\}$ and $v \in V \setminus \{w_1, w_2, w_3, w_4\}$, then Max claims $u v'$, where $v'$ is a neighbor of $v$ in $C$. He then follows the strategy described in the proof of Lemma~\ref{lem::outsideEdge} until the end of the game.
\item [(3)] Otherwise, if $u \in \{w_3, w_4\}$ and $v \in V \setminus \{w_1, w_2, w_3, w_4\}$, then Max claims a free edge $u' v'$, where $v'$ is a neighbor of $v$ in $C$ and $u'$ is the unique vertex in $\{w_3, w_4\} \setminus \{u\}$. He then follows the strategy described in the proof of Lemma~\ref{lem::nMinus2cycle} until the end of the game.
\item [(4)] Otherwise, if $u \in \{w_1, w_2\}$ and $v \in \{w_3, w_4\}$, then Max claims $u v'$, where $v'$ is the unique vertex in $\{w_3, w_4\} \setminus \{v\}$. He then follows the strategy described in the proof of Lemma~\ref{lem::triangleVertex} until the end of the game.
\end{description}
\item [Stage II:] Throughout this stage, Max follows the trivial strategy.
\end{description}

Note that at any point during the game, the graph $G \cup \{w_1
w_2\}$ admits a perfect matching; it follows that $w_1 w_2 \notin
E(G)$. In particular, the proposed strategy does account for every
legal move of Mini. Moreover, if Max never plays according to Cases
(2), (3) and (4) of Stage I, then clearly $xy \in E(G)$ holds for
every $x, y \in V \setminus \{w_1, w_2, w_3, w_4\}$ at the end of
the game. If on the other hand Max does play according to Cases (2),
(3) or (4) of Stage I, then it follows by
Lemmas~\ref{lem::outsideEdge}, \ref{lem::nMinus2cycle}
and~\ref{lem::triangleVertex}, respectively, that $s(G_0,
\mathcal{PM}) \geq \binom{n-4}{2}$. In either case we conclude that
$s(G_0, \mathcal{PM}) \geq \binom{n-4}{2}$ as claimed.
\end{proof}

\bigskip

We can now describe Max's strategy for the perfect matching game
$(n, \mathcal{PM})$. At any point during Stages I -- III, if Max is
unable to follow the proposed strategy, then he skips to Stage IV.
The proposed strategy is divided into the following four stages.
\begin{description}
\item [Stage I:] Max follows the long path strategy until $G$ contains a path $P = (u_0, \ldots, u_{\ell})$ of length $\ell \in \{n-5, n-4\}$ which includes all vertices of positive degree. At that moment, if $\ell = n-4$, then Max skips to Stage III, otherwise he proceeds to Stage II.
\item [Stage II:] Let $V(G) \setminus V(P) = \{w_1, w_2, w_3, w_4\}$. Let $uv$ denote the edge Mini claims in her subsequent move; we distinguish between the following two cases:
\begin{description}
\item [(1)] If $\{u,v\} \cap V(P) \neq \emptyset$, then Max plays as follows. If $\{u,v\} \subseteq V(P)$, then Max claims $u_{\ell} w_4$. Otherwise, assume without loss of generality that $u \notin V(P)$. Max then claims $u_{\ell} u$ if it is free and $u_0 u$ otherwise.
In either case he extends $P$ to a path of length $n-4$. By abuse of
notation and for simplicity of presentation, we denote this path by $P = (u_0,
\ldots, u_{\ell})$ as well. Max then proceeds to Stage III.
\item [(2)] Otherwise, assume without loss of generality that $u = w_3$ and $v = w_4$. Max claims $u_0 u_{\ell}$, and then follows the strategy described in the proof of Lemma~\ref{lem::edge2vertices} until the end of the game.
\end{description}
\item [Stage III:] Let $V(G) \setminus V(P) = \{w_1, w_2, w_3\}$. Let $uv$ denote the edge Mini claims in her subsequent move; we distinguish between the following three cases:
\begin{description}
\item [(1)] If $\{u,v\} \subseteq V(P)$, then Max claims a free edge $x x'$ such that $\{x, x'\} \subseteq V(P)$ and repeats Stage III.
\item [(2)] Otherwise, if $\{u,v\} \subseteq \{w_1, w_2, w_3\}$, then Max claims $u_0 u_{\ell}$ if it is free and an arbitrary free edge $x x'$ such that $\{x, x'\} \subseteq V(P)$ otherwise. He then follows the strategy described in the proof of Lemma~\ref{lem::outsideEdge} until the end of the game.
\item [(3)] Otherwise, assume without loss of generality that $u \in V(P)$ and $v \in \{w_1, w_2, w_3\}$. Max claims $u_0 u_{\ell}$ if it is free and an arbitrary free edge $x x'$ such that $\{x, x'\} \subseteq V(P)$ otherwise. He then follows the strategy described in the proof of Lemma~\ref{lem::hangingEdge} until the end of the game.
\end{description}
\item [Stage IV:] Throughout this stage, Max follows the trivial strategy.
\end{description}

It remains to prove that Max can indeed follow the proposed strategy
and that, by doing so, he ensures that $e(G) \geq \binom{n-4}{2}$
holds at the end of the game. Starting with the former, note that
Max can follow Stage I of the proposed strategy by
Lemma~\ref{lem::longPath} (throughout Stage I there are isolated
vertices in $G$ and thus it does not admit a perfect matching). He
can follow Stage II by Lemma~\ref{lem::edge2vertices} and by the
properties of $P$, and can follow Stage III by
Lemmas~\ref{lem::outsideEdge} and~\ref{lem::hangingEdge}. Finally,
it is obvious that he can follow Stage IV of the proposed strategy.

As for the latter, if Max does not reach Stage IV of the proposed
strategy, then it follows by
Lemmas~\ref{lem::edge2vertices},~\ref{lem::outsideEdge}
and~\ref{lem::hangingEdge} that $e(G) \geq \binom{n-4}{2}$ holds at
the end of the game. If on the other hand Max does reach Stage IV of
the proposed strategy, then it follows by the description of the
proposed strategy that $xy \in E(G)$ holds at the end of the game
for every $x, y \in V(P)$ and thus  $e(G) \geq \binom{n-4}{2}$.
{\hfill $\Box$ \medskip\\}

\textbf{Proof of Theorem~\ref{th::kMatching}} Throughout this proof,
we assume that $n \geq 2k$, as otherwise $s(n, {\mathcal M}_k) =
\binom{n}{2}$ and so the assertion of the theorem holds trivially.
We will use the following terminology: the \emph{parity} of a player
is \emph{odd} if he is the first to move and \emph{even} otherwise.
Assume first that the parity of Max is opposite to the parity of
$k$. In order to prove that $s(n, {\mathcal M}_k) \geq n-1$, we
present a strategy for Max. Before doing so, we prove the following
auxiliary lemma.

\begin{lemma}\label{lem::connectAllIsolated}
Let $k \geq 2$ be an integer and let $G_0 = (V,E)$ be a graph.
Assume that there exists a partition $V = U \cup W$ such that
$\nu(G_0) = \nu(G_0 \setminus W) = k-1$. Assume further that there
exist vertices $w_1, w_2 \in W$ and $u \in U$ such that
$d_{G_0}(w_1) = d_{G_0}(w_2) = 1$ and $\{uw_1, uw_2\} \subseteq E$.
Then Max, as the second player, has a strategy to ensure that at the
end of the $(G_0, \mathcal{M}_k)$ game, $d_G(w, U) \geq 1$ will hold
for every $w \in W$.
\end{lemma}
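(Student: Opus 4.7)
The plan is to set up the obvious strategy for Max: after every move of Mini, if there exists a vertex $w \in W$ with $d_G(w, U) = 0$ (call such $w$ \emph{needy}), Max claims the edge $wu$; otherwise he plays an arbitrary legal edge. The entire argument will hinge on a single structural observation together with one inductive invariant.

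The preliminary observation I would establish is that no edge lying inside $W$ is ever legal during the game. The reason is that the hypothesis $\nu(G_0 \setminus W) = k-1$ furnishes a matching $M \subseteq G_0[U]$ of size $k-1$, and for any $w', w'' \in W$ the edge $w'w''$ is vertex-disjoint from $M$, so $M \cup \{w'w''\}$ would already be a matching of size $k$ in $G$. Consequently, throughout the game $d_G(w) = d_G(w, U)$ for every $w \in W$, and in particular a needy vertex is simply an isolated vertex of $W$. I call $v \in W$ an \emph{anchor} if $d_G(v) = 1$ and $vu \in E(G)$; both $w_1$ and $w_2$ are anchors initially.

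The main technical step is to prove by induction the following invariant: immediately after each of Max's moves, either every $w \in W$ already satisfies $d_G(w, U) \geq 1$, or $G$ contains at least two anchors. Since all edges inside $W$ are illegal, Mini's edge is incident to at most one anchor, so her move destroys at most one anchor and at least one anchor survives. If needy vertices remain afterwards, Max targets a needy $w$ and claims $wu$. To check legality, suppose $\nu(G + wu) \geq k$ and let $M'$ be a $k$-matching in $G + wu$ containing $wu$. Then $M' \setminus \{wu\}$ is a $(k-1)$-matching in $G$ avoiding both $w$ and $u$; a surviving anchor $v$ has $u$ as its only neighbor, so $v \notin V(M' \setminus \{wu\})$, and $(M' \setminus \{wu\}) \cup \{vu\}$ is a $k$-matching in $G$, contradicting $\nu(G) \leq k-1$. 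After Max's move, $w$ itself has degree $1$ with $wu$ as its only incident edge and so becomes a new anchor, giving at least two anchors as required.

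To finish, I would argue by contradiction: suppose the game ends with some $w \in W$ satisfying $d_G(w, U) = 0$. By the invariant applied to the last state following a move of Max, together with the observation that Mini's subsequent move destroys at most one anchor, at least one anchor $v$ survives at the final position. The same matching swap as above then shows $wu$ is legal, contradicting the termination of the game. The main subtlety is the dual role played by the matching $M$: it simultaneously forbids edges inside $W$ (which prevents Mini from destroying two anchors in a single move and forces needy = isolated) and supplies the witness edge $vu$ proving legality of Max's move; once this double use of $M$ is identified, the proof essentially writes itself.
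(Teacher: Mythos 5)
Your proof is correct and follows essentially the same route as the paper: the identical strategy (Max connects each needy $w\in W$ to $u$), the same observation that no edge inside $W$ is ever legal (hence Mini's move touches at most one vertex of $W$ and destroys at most one anchor), and the same anchor-plus-matching argument for legality. The paper organizes this as an induction on the number of initially needy vertices and phrases the legality step as an isomorphism between $H\setminus\{w_1\}$ and $H\setminus\{z\}$, whereas you keep a running invariant (two anchors always survive after Max moves) and perform the matching swap directly; these differences are purely presentational.
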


\begin{proof}
We present a strategy for Max; it is divided into the following two
stages.

\begin{description}
\item [Stage I:] At any point during this stage, let $I = \{w \in W : d_G(w, U) = 0\}$. If $I = \emptyset$, then this stage is over and Max proceeds to Stage II. Otherwise, Max claims $uw$, where $w \in I$ is an arbitrary vertex.
\item [Stage II:] Throughout this stage, Max follows the trivial strategy.
\end{description}

It is evident that, if Max is able to follow the proposed strategy,
then $d_G(w, U) \geq 1$ holds for every $w \in W$ at the end of the
game. It thus suffices to prove that he can indeed do so. We will
prove this by induction on the size of $I$ in the beginning of the
game. If $|I| = 0$, then there is nothing to prove, as clearly Max
can follow Stage II of the proposed strategy. Assume that our claim
holds if $|I| \leq m$ for some non-negative integer $m$; we will
prove it holds for $m+1$ as well. Let $xy$ denote the edge Mini
claims in her first move. Since no edge with both endpoints in $W$
is legal (with respect to $G_0$ and $\mathcal{M}_k$), we can assume
without loss of generality that $x \in U$. In particular, $|\{x, y\}
\cap \{w_1, w_2\}| \leq 1$ and thus we can assume that $y \neq w_1$.
If $I = \emptyset$ holds immediately after this move, then there is
nothing to prove; hence, let $z \in I$ be an arbitrary vertex and
assume that Max claims $uz$ in his first move. Suppose for a
contradiction that this is not a legal move, that is, that $H := G_0
\cup \{xy, uz\}$ admits a matching of size $k$. Since this matching
must contain $uz$, and no matching of $H$ can cover both $z$ and
$w_1$, it follows that $\nu(H \setminus \{w_1\}) = k$. On the other
hand, $\nu(H \setminus \{z\}) = k-1$ holds by assumption. This is a
contradiction as clearly $H \setminus \{w_1\}$ is isomorphic to $H
\setminus \{z\}$. Immediately after Max's first move, $z \in W
\setminus I$ and thus $|I| \leq m$. Moreover, $d_H(w_1) = d_H(z) =
1$ and $\{uw_1, uz\} \subseteq E(H)$. By the induction hypothesis,
we conclude that Max can follow the proposed strategy until the end
of the game.
\end{proof}

We can now describe Max's strategy for the $k$-matching saturation
game $(n, \mathcal{M}_k)$. At any point during the game, if Max is
unable to follow the proposed strategy, then he forfeits the game.
The proposed strategy is divided into the following three stages.

\begin{description}
\item [Stage I:] Max follows the long path strategy until $G$ contains a path $P = (u_0, \ldots, u_{\ell})$ of length $\ell \in \{2k-4, 2k-3\}$ which includes all vertices of positive degree. At that moment, if $\ell = 2k-4$, then Max proceeds to Stage II, otherwise he skips to Stage III.
\item [Stage II:] Let $wv$ denote the edge Mini claims in her subsequent move; we distinguish between the following two cases:
\begin{description}
\item [(1)] If $\{w,v\} \cap V(P) \neq \emptyset$, then Max plays as follows.
If $\{w,v\} \subseteq V(P)$, then Max claims $u_{\ell} z$ for an
arbitrary vertex $z \in V(G) \setminus V(P)$. Otherwise, assume
without loss of generality that $w \notin V(P)$. Max then claims
$u_{\ell} w$ if it is free and $u_0 w$ otherwise. In either case he
extends $P$ to a path of length $2k-3$. By abuse of notation and for
simplicity of presentation, we denote this path by $P = (u_0, \ldots, u_{\ell})$ as
well. Max then proceeds to Stage III.
\item [(2)] Otherwise, assume without loss of generality that $d_G(u_0) = 1$ (recall property (c) in Lemma~\ref{lem::longPath}). Max claims $u_1 z$ for an arbitrary isolated vertex $z$, and then follows the strategy described in the proof of Lemma~\ref{lem::connectAllIsolated}, with $U = \{w, v, u_1, \ldots, u_\ell\}$, $u = u_1$ and $\{w_1, w_2\} = \{z, u_0\}$, until the end of the game.
\end{description}
\item [Stage III:] Let $wv$ denote the edge Mini claims in her subsequent move; we distinguish between the following two cases:
\begin{description}
\item [(1)] If $\{w,v\} \subseteq V(P)$, then Max claims a free edge $x y$ such that $\{x, y\} \subseteq V(P)$ and repeats Stage III.
\item [(2)] Otherwise, assume without loss of generality that $w \in V(P)$ and $v \notin V(P)$. Max claims $wz$ for some arbitrary isolated vertex $z$, and then follows the strategy described in the proof of Lemma~\ref{lem::connectAllIsolated}, with $U = V(P)$, $u = w$ and $\{w_1, w_2\} = \{z, v\}$, until the end of the game.
\end{description}
\end{description}

It remains to prove that Max can indeed follow the proposed strategy
and that, by doing so, he ensures that $e(G) \geq n-1$ holds at the
end of the game. Starting with the former, note that Max can follow
Stage I of the proposed strategy by Lemma~\ref{lem::longPath}
(throughout Stage I there are at most $2k-2$ vertices of positive
degree in $G$ and thus $\nu(G) < k$). An analogous argument shows
that he can follow Case (1) of Stage II. Max can make his first move
in Case (2) of Stage II, as $n \geq 2k$ and immediately after this
move, there are exactly $2k$ vertices of positive degree in $G$ but
no matching of $G$ covers both $z$ and $u_0$. Moreover, he can
follow the remainder of Case (2) of Stage II by
Lemma~\ref{lem::connectAllIsolated}. Next, consider Stage III. Mini
cannot claim an edge $wv$ such that $\{w,v\} \cap V(P) = \emptyset$
as no such edge is legal. Therefore, Cases (1) and (2) of Stage III
account for every legal move of Mini. Suppose for a contradiction
that at some point during the game Max forfeits the game while
attempting to follow Case (1) of Stage III. Since every free edge
with both endpoints in $V(P)$ is clearly legal, it follows that no
such edges remain. Therefore, the total number of edges played thus
far is $\binom{2k-2}{2} = (k-1)(2k-3)$ and it is Max's turn to play.
Since Max's parity is opposite to that of $k$, this is a
contradiction. Moreover, Max can make his first move in Case (2) of
Stage III, as immediately after this move, there are exactly $2k$
vertices of positive degree in $G$ but no matching of $G$ covers
both $z$ and $v$. Finally, he can follow the remainder of Case (2)
of Stage III by Lemma~\ref{lem::connectAllIsolated}.

In order to prove that $e(G) \geq n-1$ holds at the end of the game,
we examine the graph $G$ at the end of the game. If the game ends
when Max plays according to Case (2) of Stage III, then $G$ is
connected and thus $e(G) \geq n-1$. Otherwise, the game ends when
Max plays according to Case (2) of Stage II. Suppose for a
contradiction that $e(G) < n-1$ holds at the end of the game; in
particular, $G$ must be disconnected. It thus follows by the
description of the proposed strategy, that $G$ consists of exactly
two connected components, $C_1 \supseteq V(P)$ and $C_2 \supseteq
\{w,v\}$. Since $P$ admits a matching of size $k-2$ and $\nu(G) <
k$, it follows that $C_2$ is either a star or a triangle. Since
$e(G) \geq n-1$ holds in the latter case, we can assume that $C_2$
is a star. However, any edge $xy$, where $x$ is the center of the
star and $y \in C_1$, is still legal in this case, contrary to our
assumption that the game is over.

Next, assume that the parity of Mini is opposite to the parity of
$k$. Since the case $k = 2$ was considered in~\cite{PV}, we can
assume that $k \geq 3$. In order to prove the theorem, we present a
strategy for Mini. In order to simplify the description of the
strategy, we first consider several possible \emph{end-games} which
are described in the following lemmas. Since these lemmas and their
proofs are quite similar to those of Lemmas~\ref{lem::nMinus2cycle}
-- \ref{lem::edge2vertices}, we will omit some of the details.
Though this is not always necessary, in each of these lemmas we
assume that Mini is the second player.

\begin{lemma} \label{lem::oneLongCycle}
Let $k \geq 3$ be an integer and let $G_0 = (V,E)$ be a graph on $n
\geq 6$ vertices. Assume that there exists a non-trivial connected
component $C_1$ of $G_0$ such that $G_0[C_1]$ admits a Hamilton
cycle $C$ and that $d_{G_0}(u) = 0$ for every $u \in V \setminus
C_1$.
\begin{description}
\item [(a)] If $|C_1| = 2k-1$, then $s(G_0, \mathcal{M}_k) \leq \binom{2k-1}{2}$.
\item [(b)] If $|C_1| = 2k-2$ and $\binom{2k-2}{2} - e(G_0)$ is even, then $s(G_0, \mathcal{M}_k) \leq \binom{2k-1}{2}$.
\end{description}
\end{lemma}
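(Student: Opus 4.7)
For part (a), the plan is to observe that no strategy is actually needed. Since the Hamilton cycle $C$ has odd length $2k-1$, removing any single vertex from $C$ leaves a Hamilton path on $2k-2$ vertices, which has a perfect matching of size $k-1$. Hence for every $u \in V(C_1)$ and $v \notin V(C_1)$, the new edge $uv$ together with a perfect matching of $C \setminus \{u\}$ forms a matching of size $k$ in $G_0 + uv$; similarly every edge with both endpoints outside $V(C_1)$ is illegal, since $C$ already provides a size $k-1$ matching inside $V(C_1)$. Thus every move of the game must lie inside $V(C_1)$; and since $|V(C_1)|=2k-1$ forces $\nu \leq k-1$ throughout, every such edge remains legal. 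Consequently the game saturates $G[V(C_1)]$ to $K_{2k-1}$, for a total of exactly $\binom{2k-1}{2}$ edges regardless of either player's play.

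For part (b), my plan is to design a two-stage strategy for Mini that confines the game to at most $2k-1$ non-isolated vertices. In Stage~I, while Max keeps playing inside $V(C_1)$, Mini responds with an arbitrary free edge inside $V(C_1)$; the hypothesis that $\binom{2k-2}{2} - e(G_0)$ is even, together with Mini being the second player, guarantees that such a response always exists, and if Max never deviates from Stage~I then $G[V(C_1)]$ becomes $K_{2k-2}$ on one of Mini's moves and Max is then forced to play an outside edge. The moment Max plays some $uw_1$ with $w_1 \notin V(C_1)$, Mini responds with the \emph{closing move} $u'w_1$, where $u' \in V(C_1)$ is any vertex at odd distance from $u$ along the Hamilton cycle $C$ (such a $u'$ exists because $C$ has even length $2k-2$). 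In Stage~II, Mini plays arbitrary legal edges until the game terminates.

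The main obstacle, and the technical heart of the proof, is showing that once $w_1$ has been attached via $uw_1$ and $u'w_1$, every edge touching an outside vertex $w_2 \neq w_1$ is illegal, so all subsequent play is forced into $V(C_1) \cup \{w_1\}$. My plan is to argue this via the proper 2-coloring of the even cycle $C$ into classes $A \ni u$ and $B \ni u'$. Any candidate edge $u^*w_2$ with $u^* \in V(C_1)$ and $w_2$ isolated has $u^*$ lying in the class opposite to at least one of $u, u'$, say $u^* \in A$; then $\{u^*, u'\}$ is at odd distance on $C$, so $C \setminus \{u^*, u'\}$ splits into two paths of even length, admitting a perfect matching of size $k-2$. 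Combining this matching with the edges $w_1 u'$ and $u^* w_2$ yields a matching of size $k$ in $G + u^* w_2$, making the edge illegal; outside-outside edges $w_j w_k$ are immediately illegal since $C$ provides a size $k-1$ matching of $V(C_1)$. On the other hand, every edge inside $V(C_1) \cup \{w_1\}$ stays legal throughout Stage~II because $\nu \leq k-1$ on a set of $2k-1$ vertices, which also justifies the legality of Mini's closing move itself. Putting these together shows that Stage~II ends precisely when $V(C_1) \cup \{w_1\}$ induces $K_{2k-1}$, yielding a total of $\binom{2k-1}{2}$ edges as required.
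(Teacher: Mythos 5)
Your proof is correct, and it takes essentially the same overall strategy as the paper's for both parts: part~(a) is the same easy observation, and in part~(b) Mini mirrors inside $V(C_1)$ during an initial phase (using the parity hypothesis), makes a single ``closing move'' to the first outside vertex $w_1$ that Max touches, and then plays arbitrarily. The one genuine difference is in the closing move and its justification. The paper has Mini claim $uv'$ with $v'$ \emph{adjacent to} $v$ on $C$, which makes $C_1 \cup \{w_1\}$ the vertex set of a component containing a Hamilton cycle of length $2k-1$ (replace the cycle edge $vv'$ by the path $v,w_1,v'$) with all remaining vertices isolated, so it can simply invoke Part~(a). You instead allow $u'$ at any odd distance from $u$ and argue directly, via the bipartition of the even cycle, that after the closing move any edge $u^*w_2$ with $w_2 \notin V(C_1)\cup\{w_1\}$ would extend to a $k$-matching (pair $u^*$ with the one of $u,u'$ in the opposite class, delete the two to get a perfectly matchable pair of even paths of total size $2k-4$, and add the two pendant edges). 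Both arguments are sound; your version is a mild generalization at the cost of a more hands-on matching computation, whereas the paper's particular choice of $v'$ buys a clean reduction to the already-proved Part~(a). Had you wanted the reduction, you would in fact need $u'$ adjacent to $u$ on $C$: for any larger odd distance, $C_1\cup\{w_1\}$ does \emph{not} carry a Hamilton cycle, so Part~(a) would not apply directly, and your independent argument is then what saves the day.
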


\begin{proof}
Part (a) is trivial since, throughout the $(G_0, \mathcal{M}_k)$
game, the only legal edges are those with both endpoints in $C_1$.
Hence, $e(G) \leq \binom{2k-1}{2}$ will hold at the end of the game
no matter how Mini plays. As for (b), Mini plays according to the
following simple strategy.

\begin{description}
\item [Stage I:] Let $uv$ denote the last edge claimed by Max; we distinguish between the following two cases:
\begin{description}
\item [(1)] If $\{u,v\} \subseteq C_1$, then Mini claims an arbitrary free edge $w w'$ such that $\{w,w'\} \subseteq C_1$ and repeats Stage I.
\item [(2)] Otherwise, assume without loss of generality that $v \in C_1$ and $u \notin C_1$. Mini claims $uv'$, where $v'$ is a neighbor of $v$ in $C$. She then proceeds to Stage II.
\end{description}
\item [Stage II:] Throughout this stage, Mini follows the trivial strategy.
\end{description}

Since no edge $xy$ such that $\{x,y\} \in V \setminus C_1$ is legal,
it follows that the proposed strategy does account for every legal
move of Max. Moreover, since Mini is the second player and
$\binom{2k-2}{2} - e(G_0)$ is even, it follows that she can play
according to Case (1) of Stage I. Hence, at some point during the
game, Max must claim an edge $uv$ such that $|\{u,v\} \cap C_1| =
1$. By Case (2) of Stage I and by the analysis of Part (a) of the
lemma, we conclude that $e(G) \leq \binom{2k-1}{2}$ will hold at the
end of the game.
\end{proof}

\begin{lemma} \label{lem::edgeOrTriangle}
Let $k \geq 3$ be an integer and let $G_0 = (V,E)$ be a graph on $n
\geq 6$ vertices. Assume that there are two non-trivial connected
component $C_1$ and $C_2$ of $G_0$, where $|C_1| = 2k-3$. Assume
further that $G_0[C_1]$ admits a Hamilton cycle $C$ and that
$d_{G_0}(u) = 0$ for every $u \in V \setminus (C_1 \cup C_2)$.
\begin{description}
\item [(a)] If $G_0[C_2] \cong K_3$, then $s(G_0, \mathcal{M}_k) \leq \binom{2k-1}{2}$.
\item [(b)] If $G_0[C_2] \cong K_2$ and $\binom{2k-3}{2} - e(G_0[C_1])$ is even, then $s(G_0, \mathcal{M}_k) \leq \binom{2k-1}{2}$.
\end{description}
\end{lemma}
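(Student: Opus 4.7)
The plan is to handle the two parts separately. For part (a), I claim no strategy is required at all: in $G_0$ the only legal edges are those inside $V(C_1)$. To verify this, I would show that for every candidate edge $e$ with an endpoint in $V(C_2)$ or an isolated vertex, one can construct a matching of size $k$ by combining $e$ with a near-perfect matching of the odd cycle $C_1$ (leaving out the $C_1$-endpoint of $e$ if present, noting that $C_1$ minus any vertex is a Hamilton path with matching $k-2$) and with an edge of the triangle $C_2$ (avoiding the $C_2$-endpoint of $e$ if present). Hence the final score is at most $e(G_0[C_2]) + \binom{|C_1|}{2} = 3 + \binom{2k-3}{2}$, which one checks directly is at most $\binom{2k-1}{2}$ for every $k \geq 3$.

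For part (b), write $V(C_2) = \{v, v'\}$. I would give Mini the following response strategy, splitting Max's moves into three types. (i) If Max plays inside $V(C_1)$, Mini plays an arbitrary free edge inside $V(C_1)$. (ii) If Max plays a cross edge $uv$ (symmetrically $uv'$) with $u \in V(C_1)$, Mini plays $u'v'$ (respectively $u'v$), where $u'$ is a neighbor of $u$ on the Hamilton cycle $C$ of $C_1$; replacing the sub-arc $uu'$ of $C$ by the path $u\,v\,v'\,u'$ (respectively $u\,v'\,v\,u'$) produces a Hamilton cycle on the merged $(2k-1)$-vertex component $V(C_1) \cup V(C_2)$. (iii) If Max plays $wv$ (symmetrically $wv'$) with $w$ an isolated vertex, Mini plays $wv'$ (respectively $wv$), completing a triangle on $\{v,v',w\}$ that is disjoint from $C_1$.

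The pay-off of these responses is that after (ii) the state meets the hypothesis of Lemma~\ref{lem::oneLongCycle}(a), and after (iii) it meets the hypothesis of part (a) of the present lemma (with the triangle $\{v,v',w\}$ in the role of $C_2$); in either case the remaining score is bounded by $\binom{2k-1}{2}$. The parity assumption is precisely what makes case (i) always available to Mini: each type-(i) round adds exactly two edges inside $V(C_1)$, so the initially even number of free edges inside $V(C_1)$ remains non-negative whenever Mini needs to move there. Consequently either Max eventually plays a move of type (ii) or (iii), or $V(C_1)$ becomes complete and he is then forced into such a move.

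The main obstacle is the explicit matching-size calculation verifying that each of Mini's proposed responses is itself legal. These are a handful of short cases analogous to the one used for part (a): given the enlarged graph, one exhibits a maximum matching and checks it has size $k-1$. Once these legality verifications are in hand, the transitions into Lemma~\ref{lem::oneLongCycle}(a) and into part (a) are immediate and complete the proof.
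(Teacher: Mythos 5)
Your proposal is correct and follows essentially the same approach as the paper's proof. For part (a) you observe that the only legal edges are inside $V(C_1)$ so the trivial strategy suffices, and for part (b) you use the same three-case response strategy (mirror moves inside $C_1$; merge $C_1$ and $C_2$ into a $(2k-1)$-vertex Hamiltonian component when Max plays a $C_1$--$C_2$ edge; complete a triangle on $V(C_2) \cup \{w\}$ when Max plays a $C_2$--isolated edge), with the identical parity argument guaranteeing Mini can always mirror inside $C_1$, and the same reduction to Lemma~\ref{lem::oneLongCycle}(a) and to part~(a) of the present lemma.
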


\begin{proof}
Part (a) is trivial since, throughout the $(G_0, \mathcal{M}_k)$
game, the only legal edges are those with both endpoints in $C_1$.
Hence, by following the trivial strategy, Mini ensures that $e(G)
\leq 3 + \binom{2k-3}{2} \leq \binom{2k-1}{2}$ will hold at the end
of the game. As for (b), Mini plays according to the following
simple strategy.

\begin{description}
\item [Stage I:] Let $uv$ denote the last edge claimed by Max; we distinguish between the following two cases:
\begin{description}
\item [(1)] If $\{u,v\} \subseteq C_1$, then Mini claims an arbitrary free edge $w w'$ such that $\{w,w'\} \subseteq C_1$ and repeats Stage I.
\item [(2)] Otherwise, assume without loss of generality that $u \in C_2$. Let $u'$ be the unique vertex in $C_2 \setminus \{u\}$. If $v \in C_1$, Mini claims $u' v'$, where $v'$ is a neighbor of $v$ in $C$. Otherwise, Mini claims $u' v$. In either case, she then proceeds to Stage II.
\end{description}
\item [Stage II:] Throughout this stage, Mini follows the trivial strategy.
\end{description}

Since every legal edge either has two endpoints in $C_1$ or one
endpoint in $C_2$, the proposed strategy does account for every
legal move of Max. Moreover, since Mini is the second player and
$\binom{2k-3}{2} - e(G_0[C_1])$ is even, it follows that she can
play according to Case (1) of Stage I. Hence, at some point during
the game, Max must claim an edge $uv$ such that $|\{u,v\} \cap C_1|
\leq 1$. If $|\{u,v\} \cap C_1| = 0$, then by Case (2) of Stage I
and by the analysis of Part (a) of the lemma, we conclude that $e(G)
\leq \binom{2k-1}{2}$ will hold at the end of the game. If $|\{u,v\}
\cap C_1| = 1$, then by Case (2) of Stage I and by
Lemma~\ref{lem::oneLongCycle}(a), we conclude that $e(G) \leq
\binom{2k-1}{2}$ will hold at the end of the game.
\end{proof}

\begin{lemma} \label{lem::edgeOrTriangle2}
Let $k \geq 4$ be an integer and let $G_0 = (V,E)$ be a graph on $n
\geq 8$ vertices. Assume that there are two non-trivial connected
component $C_1$ and $C_2$ of $G_0$, where $|C_1| = 2k-4$. Assume
further that $G_0[C_1]$ admits a Hamilton cycle $C$, that
$d_{G_0}(u) = 0$ for every $u \in V \setminus (C_1 \cup C_2)$, and
that $\binom{2k-4}{2} - e(G_0[C_1])$ is even.
\begin{description}
\item [(a)] If $G_0[C_2] \cong K_3$, then $s(G_0, \mathcal{M}_k) \leq \binom{2k-1}{2}$.
\item [(b)] If $G_0[C_2] \cong K_2$, then $s(G_0, \mathcal{M}_k) \leq \binom{2k-1}{2}$.
\end{description}
\end{lemma}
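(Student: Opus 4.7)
My plan is to give Mini a strategy following the same template as Lemmas~\ref{lem::oneLongCycle} and~\ref{lem::edgeOrTriangle}. Mini either responds trivially inside $C_1$ or performs a single ``absorption'' move that rebuilds the non-trivial part of $G$ into the precise hypothesis of one of the earlier end-game lemmas, after which she plays according to that lemma. The key point is that once the non-trivial part has at most $2k-1$ vertices, no edge inside it can lift the matching number to $k$, so Mini only has to herd the game into such a configuration while preserving any parity condition required by the lemma she defers to. The proof of part (a) will be presented first and then used in part (b).

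For part (a), a short case analysis shows that the only legal Max moves are (1) an edge inside $C_1$, (2) a $C_1$--$C_2$ edge $uv$ with $u\in C_2$ and $v\in C_1$, or (3) a $C_1$--isolated edge $uv$ with $u\in C_1$; any other move, in particular an edge from $C_2$ to an isolated vertex, would combine with the near-perfect matching of $C_1$ to produce a $k$-matching. Against (1) Mini plays any free edge inside $C_1$, which exists by the parity assumption. Against (2) she plays $u_1 v'$ where $u_1\in C_2\setminus\{u\}$ and $v'$ is a neighbor of $v$ on the Hamilton cycle $C$; the merged component then carries a Hamilton cycle on $2k-1$ vertices, so Mini defers to Lemma~\ref{lem::oneLongCycle}(a). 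Against (3) she plays $vv'$ where $v'$ is a neighbor of $u$ on $C$, enlarging $C_1$ to a Hamilton-cyclic $C_1'$ on $2k-3$ vertices with $C_2$ still a triangle, and defers to Lemma~\ref{lem::edgeOrTriangle}(a).

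For part (b) there is one additional legal move type, (4) a $C_2$--isolated edge, and the other branches carry over with parity-sensitive deferrals. Against (2) Mini plays $u'v'$ (with $u'$ the other vertex of $C_2$), obtaining a Hamilton cycle on $2k-2$ vertices, and invokes Lemma~\ref{lem::oneLongCycle}(b). Against (3) she again plays $vv'$ and invokes Lemma~\ref{lem::edgeOrTriangle}(b). Against (4), with $u\in C_2$ and $v$ isolated, she plays $u'v$, which closes $C_2\cup\{v\}$ into a triangle while leaving $C_1$ and its parity untouched, and invokes part (a) of the present lemma.

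The write-up then reduces to three routine verifications, the last of which is the main obstacle. First, each Mini response is legal because it lies inside a component of at most $2k-1$ vertices and therefore cannot contain a $k$-matching. Second, each absorption response is available since no edge incident to the relevant $C_2$-vertex or to the isolated vertex in question has been played before the branching move. Third, the parity condition in Lemmas~\ref{lem::oneLongCycle}(b) and~\ref{lem::edgeOrTriangle}(b) must be preserved: Case (1) responses add $C_1$-edges in pairs, and using the identities $\binom{2k-3}{2}-\binom{2k-4}{2}=2k-4$ (even) and $\binom{2k-2}{2}-\binom{2k-4}{2}=4k-7$ (odd) one checks that the two or three edges introduced by the absorption responses flip exactly the right number of parities for the hypothesis of the target lemma. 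The bookkeeping in Case (b)(2) is the most delicate step, since three new edges (one by Max, one by Mini, and the surviving $C_2$ edge) enter the new ``$C_1$'' of size $2k-2$ and must be balanced against the odd quantity $4k-7$.
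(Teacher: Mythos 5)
Your proposal is correct and follows essentially the same strategy as the paper: Mini mirrors Max inside $C_1$ until Max leaves $C_1$, then makes one absorption move that reshapes the non-trivial part into the exact hypothesis of one of the earlier end-game lemmas, with a parity check justifying each deferral to a part-(b) lemma. The paper bundles your cases a little differently (e.g.\ its Case~(2) of part~(a) covers both your Cases~(2) and~(3) via a sub-case on whether the outside endpoint lies in $C_2$, and in part~(b) it orders the branches differently), but the responses and deferrals coincide, and your parity identities $\binom{2k-3}{2}-\binom{2k-4}{2}=2k-4$ (even) and $\binom{2k-2}{2}-\binom{2k-4}{2}=4k-7$ (odd) are exactly the bookkeeping the paper implicitly relies on.
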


\begin{proof}
Starting with (a), Mini plays according to the following simple
strategy.

\begin{description}
\item [Stage I:] Let $uv$ denote the last edge claimed by Max; we distinguish between the following two cases:
\begin{description}
\item [(1)] If $\{u,v\} \subseteq C_1$, then Mini claims an arbitrary free edge $w w'$ such that $\{w,w'\} \subseteq C_1$ and repeats Stage I.
\item [(2)] Otherwise, assume without loss of generality that $v \in C_1$ and $u \notin C_1$. If $u \in C_2$, Mini claims $u' v'$, where $u'$ is some vertex of $C_2 \setminus \{u\}$ and $v'$ is a neighbor of $v$ in $C$. Otherwise, Mini claims $u v'$, where $v'$ is a neighbor of $v$ in $C$. In either case, she then proceeds to Stage II.
\end{description}
\item [Stage II:] Throughout this stage, Mini follows the trivial strategy.
\end{description}

Since every legal edge has at least one endpoint in $C_1$, it
follows that the proposed strategy does account for every legal move
of Max. Moreover, since Mini is the second player and
$\binom{2k-4}{2} - e(G_0[C_1])$ is even, it follows that she can
play according to Case (1) of Stage I. Hence, at some point during
the game, Max must claim an edge $uv$ such that $|\{u,v\} \cap C_1|
= 1$. If $|\{u,v\} \cap C_2| = 1$, then by Case (2) of Stage I and
by Lemma~\ref{lem::oneLongCycle}(a) and its proof, we conclude that
$e(G) \leq \binom{2k-1}{2}$ will hold at the end of the game.
Otherwise, by Case (2) of Stage I and by
Lemma~\ref{lem::edgeOrTriangle}(a) and its proof, we conclude that
$e(G) \leq \binom{2k-1}{2}$ will hold at the end of the game.

As for (b), Mini plays according to the following simple strategy.
Let $uv$ denote the last edge claimed by Max; we distinguish between
the following three cases:
\begin{description}
\item [(1)] If $\{u,v\} \subseteq C_1$, then Mini claims an arbitrary free edge $w w'$ such that $\{w,w'\} \subseteq C_1$.
\item [(2)] Otherwise, if $\{u,v\} \cap C_1 = \emptyset$, assume without loss of generality that $u \in C_2$ and $v \in V \setminus (C_1 \cup C_2)$. Mini claims $u' v$, where $u'$ is the unique vertex in $C_2 \setminus \{u\}$ and then follows the strategy described in the proof of Part (a) of the lemma until the end of the game.
\item [(3)] Otherwise, assume without loss of generality that $v \in C_1$ and let $v'$ be a neighbor of $v$ in $C$. If $u \in C_2$, Mini claims $u' v'$, where $u'$ is the unique vertex in $C_2 \setminus \{u\}$ and then follows the strategy described in the proof of Lemma~\ref{lem::oneLongCycle}(b) until the end of the game. Otherwise, Mini claims $u v'$ and then follows the strategy described in the proof of Lemma~\ref{lem::edgeOrTriangle}(b) until the end of the game.
\end{description}

Since every legal edge has at least one endpoint in $C_1 \cup C_2$,
it follows that the proposed strategy does account for every legal
move of Max. Moreover, since Mini is the second player and
$\binom{2k-4}{2} - e(G_0[C_1])$ is even, it follows that she can
play according to Case (1). Hence, at some point during the game,
Max must claim an edge $uv$ such that $|\{u,v\} \cap C_1| \leq 1$.
If $|\{u,v\} \cap C_1| = 0$, then by Case (2) and by Part (a) of the
lemma, we conclude that $e(G) \leq \binom{2k-1}{2}$ will hold at the
end of the game. Otherwise, by Case (3) and by
Lemmas~\ref{lem::oneLongCycle}(b) and~\ref{lem::edgeOrTriangle}(b),
we conclude that $e(G) \leq \binom{2k-1}{2}$ will hold at the end of
the game.
\end{proof}

\begin{lemma} \label{lem::CycleAndMaybePendingEdge}
Let $k \geq 3$ be an integer and let $G_0 = (V,E)$ be a graph on $n
\geq 6$ vertices. Assume that there exists a non-trivial connected
component $C_1$ of $G_0$ of order $2k-3$ such that $G_0[C_1]$ admits
a Hamilton cycle $C$. Let $x \in V \setminus C_1$ and assume that
$d_{G_0}(x) \leq 1$ and $d_{G_0}(u) = 0$ for every $u \in V
\setminus (C_1 \cup \{x\})$.
\begin{description}
\item [(a)] If $\binom{2k-2}{2} - e(G_0)$ is even and there exists a vertex $w \in C_1$ such that $wx \in E$, then $s(G_0, \mathcal{M}_k) \leq \binom{2k-1}{2}$.
\item [(b)] If $\binom{2k-3}{2} - e(G_0)$ is odd and $d_{G_0}(x) = 0$, then $s(G_0, \mathcal{M}_k) \leq \binom{2k-1}{2}$.
\end{description}
\end{lemma}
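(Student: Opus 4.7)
The plan is to exhibit, for each of (a) and (b), a strategy for Mini that either closes the game on a set of $2k-1$ vertices (giving $e(G)\le\binom{2k-1}{2}$ directly) or reduces the residual position to an earlier end-game lemma already proved in the excerpt.

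For part (a), I would set $U_0:=C_1\cup\{x\}$ (of size $2k-2$) and prescribe a two-stage strategy. In Stage I, whenever Max's last move lies inside $U_0$, Mini responds with an arbitrary legal edge inside $U_0$; all such edges are legal because only $2k-2$ vertices are active, forcing $\nu(G)\le k-1$. The parity hypothesis $\binom{2k-2}{2}-e(G_0)\equiv 0\pmod 2$, combined with Max moving first, ensures that Mini is never forced out of $U_0$ before Max is. Stage II triggers the first time Max plays $yu$ with $y\in U_0$ and $u\notin U_0$: if $y\in C_1$ Mini answers with $xu$, and if $y=x$ she answers with $wu$. Setting $U:=U_0\cup\{u\}$ (so $|U|=2k-1$), I would then verify by a short case analysis on $z\in U$ (splitting into $z=x$, $z=u$, $z=w$, $z=y$, and $z$ another cycle vertex) that after Mini's response $\nu(G\setminus\{z\})\ge k-1$ for every $z\in U$; in each case a $(k-1)$-matching is built from the Hamilton cycle $C$ (or from the Hamilton path obtained by deletion) together with the pendant edge $wx$ and the two edges incident to $u$. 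Once this holds, every edge from $U$ to $V\setminus U$ is illegal (the new matching uses it) and every edge inside $V\setminus U$ is illegal (it augments the existing $(k-1)$-matching), so the remainder of the game proceeds entirely inside $U$, terminating at $G[U]=K_{2k-1}$ and giving $e(G)=\binom{2k-1}{2}$.

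For part (b), $x$ is isolated, and Mini's aim is to reduce, at the first opportunity, to part (a), to Lemma~\ref{lem::oneLongCycle}(b), or to Lemma~\ref{lem::edgeOrTriangle}(b). In Stage I, Mini mirrors Max's within-$C_1$ moves with her own within-$C_1$ responses; the parity hypothesis $\binom{2k-3}{2}-e(G_0)\equiv 1\pmod 2$ ensures she can maintain this. As soon as Max deviates from within-$C_1$ play, Mini chooses her response according to the following case analysis: (i) if Max plays $yv$ with $y\in C_1$ and $v\notin C_1\cup\{x\}$, Mini plays $y'v$ with $y'$ a neighbor of $y$ on $C$, producing a Hamilton cycle on $C_1\cup\{v\}$ and reducing to Lemma~\ref{lem::oneLongCycle}(b); (ii) if Max plays $yx$ with $y\in C_1$, Mini plays an arbitrary within-$C_1$ edge, producing the pendant configuration required by part (a); (iii) if Max plays $vv'$ with $v,v'\notin C_1$, Mini again plays within $C_1$, producing the $K_2$-component structure required by Lemma~\ref{lem::edgeOrTriangle}(b). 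If instead the within-$C_1$ edges become exhausted first, Mini is forced to play outside; she then claims $uv$ for two outside vertices $u,v\neq x$ (or, when $n$ is so small that such a pair does not exist, an edge from $x$ to $C_1$, reducing to part (a)), again reducing to Lemma~\ref{lem::edgeOrTriangle}(b). In every sub-case, the parity assumption together with the observation that Max moves on odd-numbered moves gives exactly the even/odd parity required by the invoked lemma.

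The main obstacle is the case analysis in part (a) showing that $\nu(G\setminus\{z\})\ge k-1$ for every $z\in U$; the cases $z=w$ and $z=y$ are the delicate ones, because deleting $z$ breaks an edge that was essential to the natural $(k-1)$-matching, and one must reroute through $x$ or $u$ using the two new edges supplied by Stage II. A secondary technical point, recurring throughout part (b), is to check that Mini can always find a legal within-$C_1$ response in each sub-case of Stage II; this reduces to a parity inequality showing that Max cannot simultaneously force Mini outside and satisfy the turn-parity condition.
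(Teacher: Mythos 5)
Your proposal is correct, but it takes a genuinely different route from the paper's. For part (a), the paper's Mini responds to Max's \emph{first} move in one of two ways and then immediately hands off: if Max plays inside $C_1\cup\{x\}$, she claims $xw'$ (for $w'$ a cycle-neighbour of $w$), thereby producing a Hamilton cycle on the $(2k-2)$-set $C_1\cup\{x\}$ and invoking Lemma~\ref{lem::oneLongCycle}(b); if Max plays a vertex $u$ outside, she connects $u$ into the structure and then observes that the game localises to $C_1\cup\{u,x\}$. Your mirroring Stage~I (absent in the paper) plus the explicit invariant $\nu(G\setminus\{z\})\ge k-1$ for all $z\in U$ is a self-contained, bare-hands substitute for the appeal to Lemma~\ref{lem::oneLongCycle}(b); the case analysis you sketch does go through (the $z=w,y$ cases are indeed saved by $xu$, as you note). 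For part (b), the paper again dispenses with mirroring: one immediate response reduces to Lemma~\ref{lem::edgeOrTriangle}(b) or Lemma~\ref{lem::oneLongCycle}(b) depending on how many endpoints of Max's move lie in $C_1$, and part (a) is never invoked. Your version mirrors inside $C_1$ until Max deviates or the interior exhausts, and in the sub-case Max plays $yx$ you reduce to part~(a) — this is fine (no circularity, since (a) is established first), but it is a different chain of reductions. One small imprecision: the phrase ``the parity hypothesis \ldots ensures she can maintain [the mirroring]'' is misleading in part~(b), since the \emph{odd} parity guarantees the opposite — that Mini can always find a within-$C_1$ response after Max plays \emph{off} $C_1$, but that if Max insists on playing inside $C_1$ he, not Mini, gets the last interior edge, forcing Mini outside; you do handle this exhaustion case, so the argument is saved, but the parity claim should be restated. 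Net effect: the paper's route is shorter by leaning harder on the earlier end-game lemmas, while yours is more explicit about the matching-number bookkeeping that those lemmas hide.
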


\begin{proof}
Starting with (a), Mini plays according to the following simple
strategy.

\begin{description}
\item [Stage I:] Let $uv$ denote the last edge claimed by Max; we distinguish between the following two cases:
\begin{description}
\item [(1)] If $\{u,v\} \subseteq C_1 \cup \{x\}$, then Mini claims a free edge $x w'$, where $w'$ is a neighbor of $w$ in $C$. She then follows the strategy described in the proof of Lemma~\ref{lem::oneLongCycle}(b) until the end of the game.
\item [(2)] Otherwise, assume without loss of generality that $d_{G_0}(u) = 0$. If $v = x$, Mini claims $u z$, where $z \in C_1$ is an arbitrary vertex and otherwise she claims $ux$. In either case, she then proceeds to Stage II.
\end{description}
\item [Stage II:] Throughout this stage, Mini follows the trivial strategy.
\end{description}

Since Mini is the second player and $\binom{2k-2}{2} - e(G_0)$ is
even, it follows that Mini can play according to Case (1) of Stage I
and thus, by Lemma~\ref{lem::oneLongCycle}(b), ensure that $e(G)
\leq \binom{2k-1}{2}$ will hold at the end of the game. If Mini
plays according to Case (2) of Stage I, then after her first move,
every legal edge has both endpoints in $C_1 \cup \{u,x\}$ and thus
$e(G) \leq \binom{2k-1}{2}$ will hold at the end of the game.

As for (b), Mini plays according to the following simple strategy.
Let $uv$ denote the last edge claimed by Max; we distinguish between
the following three cases:
\begin{description}
\item [(1)] If $\{u,v\} \subseteq C_1$, then Mini claims an arbitrary free edge $w w'$ such that $d_{G_0}(w) = d_{G_0}(w') = 0$ and then follows the strategy described in the proof of Lemma~\ref{lem::edgeOrTriangle}(b) until the end of the game.
\item [(2)] Otherwise, if $\{u,v\} \cap C_1 = \emptyset$, then Mini claims an arbitrary free edge $w w'$ such that $\{w,w'\} \subseteq C_1$ and then follows the strategy described in the proof of Lemma~\ref{lem::edgeOrTriangle}(b) until the end of the game.
\item [(3)] Otherwise, assume without loss of generality that $v \in C_1$ and $d_{G_0}(u) = 0$. Mini claims $u v'$, where $v'$ is a neighbor of $v$ in $C$ and then follows the strategy described in the proof of Lemma~\ref{lem::oneLongCycle}(b) until the end of the game.
\end{description}

Since $\binom{2k-3}{2} - e(G_0)$ is odd, it follows that Mini can
play according to the proposed strategy. Moreover, it follows by
Lemmas~\ref{lem::edgeOrTriangle}(b) and~\ref{lem::oneLongCycle}(b)
that $e(G) \leq \binom{2k-1}{2}$ will hold at the end of the game.
\end{proof}

We can now describe Mini's strategy for the $k$-matching saturation
game $(n, \mathcal{M}_k)$. At any point during the game, if Mini is
unable to follow the proposed strategy, then she forfeits the game.
The proposed strategy is divided into the following three stages.

\begin{description}
\item [Stage I:] Mini follows the long path strategy until $G$ contains a path $P = (u_0, \ldots, u_{\ell})$ of
length $\ell \in \{2k-5,2k-4\}$ which includes all vertices of
positive degree. At that moment, if $\ell = 2k-5$, then Mini
proceeds to Stage II, otherwise she skips to Stage III.
\item [Stage II:] Let $uv$ denote the edge Max claims in his subsequent move; we distinguish between the following two cases:
\begin{description}
\item [(1)] If $\{u,v\} \cap V(P) \neq \emptyset$, then Mini plays as follows.
If $\{u,v\} \subseteq V(P)$, then Mini claims $u_{\ell} z$ for an
arbitrary vertex $z \in V(G) \setminus V(P)$. Otherwise, assume
without loss of generality that $u \notin V(P)$. Mini then claims
$u_{\ell} u$ if it is free and $u_0 u$ otherwise. In either case she
extends $P$ to a path of length $2k-4$. By abuse of notation and for
simplicity of presentation, we denote this path by $P = (u_0, \ldots, u_{\ell})$ as
well. Mini then proceeds to Stage III.
\item [(2)] Otherwise, Mini claims the edge $u_0 u_\ell$, and then plays according to the strategy described in the proof of
Lemma~\ref{lem::edgeOrTriangle2}(b) until the end of the game.
\end{description}
\item [Stage III:] Let $uv$ denote the edge Max claims in his subsequent move. Mini claims $u_0 u_\ell$ if it is free and an arbitrary edge $w w'$ such that $\{w, w'\} \subseteq V(P)$ otherwise; we then distinguish between the following three cases:
\begin{description}
\item [(1)] If $|\{u,v\} \cap V(P)| = 0$, then Mini plays according to the strategy described in the proof of
Lemma~\ref{lem::edgeOrTriangle}(b) until the end of the game.
\item [(2)] If $|\{u,v\} \cap V(P)| = 1$, then Mini plays according to the strategy described in the proof of
Lemma~\ref{lem::CycleAndMaybePendingEdge}(a) until the end of the
game.
\item [(3)] If $|\{u,v\} \cap V(P)| = 2$, then Mini plays according to the strategy described in the proof of
Lemma~\ref{lem::CycleAndMaybePendingEdge}(b) until the end of the
game.
\end{description}
\end{description}

It follows by Lemma~\ref{lem::longPath} that Mini can play according
to Stage I of the proposed strategy. Lemma~\ref{lem::longPath} also
ensures that $u_0 u_{\ell}$ is free if Mini wishes to follow Case
(2) of Stage II (the only possible exception is the case $\ell = 1$,
but this can only occur if $k=3$ and Mini is the first player; this
case is excluded by our assumption that the parity of Mini is
opposite to the parity of $k$). Mini can play according to the
remainder of the proposed strategy by our assumption that the parity
of Mini is opposite to the parity of $k$.

Finally, it follows by Lemmas~\ref{lem::edgeOrTriangle2}(b),
\ref{lem::edgeOrTriangle}(b), \ref{lem::CycleAndMaybePendingEdge}(a)
and~\ref{lem::CycleAndMaybePendingEdge}(b) that $e(G) \leq
\binom{2k-1}{2}$ will hold at the end of the game. {\hfill $\Box$
\medskip\\}

\section{Concluding remarks and open problems}
\label{sec::openprob}

In this paper we proved lower and upper bounds on the scores of
several natural saturation games, namely, connectivity, colorability
and matching games. Other natural graph properties could be
considered; one interesting example is Hamiltonicity. Let ${\mathcal
H}$ denote the graph property of admitting a Hamilton cycle. It was
proved by Ore~\cite{Ore} that $ex(n, {\mathcal H}) = \binom{n-1}{2}
+ 1$. On the other hand, it is known (see, e.g.,~\cite{FFS}) that if
$n$ is not too small, then $sat(n, {\mathcal H}) = \lceil 3n/2
\rceil$. Our attempts to determine $s(n, {\mathcal H})$ lead us to
make the following conjecture.

\begin{conjecture} \label{conj::scoreHam}
$s(n, {\mathcal H}) = \Theta(n^2)$.
\end{conjecture}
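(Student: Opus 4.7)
The upper bound $s(n, \mathcal{H}) \leq \binom{n-1}{2}+1 = O(n^2)$ follows immediately from $s \leq ex$ together with Ore's result $ex(n, \mathcal{H}) = \binom{n-1}{2}+1$. For the lower bound the plan is to exhibit a strategy for Max forcing $e(G) = \Omega(n^2)$; the natural target is the unique extremal non-Hamiltonian graph $K_{n-1}$ with a pendant. Max would fix a vertex $v$ in advance and play exclusively inside $H := G - v$, aiming to render $H$ so dense that, by the time Mini is forced to cause saturation, $v$ must end up as a pendant and $H$ close to a clique, yielding the upper bound up to lower-order terms.

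Concretely, Max's strategy would proceed in two stages. In Stage~I, Max uses the long path strategy of Lemma~\ref{lem::longPath}, restricted to $V \setminus \{v\}$, to build a Hamilton path of $H$; during this stage $G$ contains only a tree-like structure inside $H$ plus at most a few edges Mini may have played incident to $v$, so the legality of each move is easy to check. In Stage~II, Max plays chords inside $H$, at each step choosing an edge incident to a vertex of currently minimum degree in $H$. Once $\delta(H) \geq (n-1)/2$, Ore-type sufficient conditions imply that $H$ is Hamilton-connected, and from that moment any edge $vu'$ that would push $d_G(v)$ from $1$ to $2$ becomes illegal for Mini, since together with a Hamilton $N(v)$-$u'$ path of $H$ it completes a Hamilton cycle through $v$. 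Consequently the game can only end once $H$ is complete and $d_G(v)=1$, which would give $e(G)=\binom{n-1}{2}+1$.

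The heart of the argument, and the main obstacle, is controlling Mini during the window in which $H$ has not yet reached Hamilton-connectivity. Since $sat(n, \mathcal{H}) = \lceil 3n/2 \rceil$ is only linear, the family of Hamilton-saturated graphs is rich, and Mini may try to steer the game toward a saturated end-state in which $d_G(v) \geq 2$ and $H$ still has only $\Theta(n)$ edges; such an end-state would correspond to $H$ being $(u_1,u_2)$-Hamilton-path-saturated with respect to $v$'s two neighbors. To rule this out, one needs to show that Max's min-degree policy forces $\delta(H)$ to grow steadily despite Mini's attempts to attack $v$, and that whenever $d_G(v)$ does reach $2$ while $H$ is still sparse, Max can respond by playing a chord inside $H$ that creates a Hamilton path between $v$'s two neighbors --- thereby either immediately completing a Hamilton cycle (hence illegal for Max, meaning Mini's current move was already forbidden) or confirming that $H$ is already dense enough. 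A potential-function argument tracking $\Phi(H) = \sum_{u} \max\{0,(n-1)/2 - d_H(u)\}$, combined with end-game lemmas in the spirit of Section~\ref{sec::matching} for the boundary cases $d_G(v) \in \{0,1,2,3\}$, appear to be the natural tools; the verification of legality at every Max move in the presence of low-degree vertices in $H$ is the bulk of the technical work.
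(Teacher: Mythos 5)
The statement you are addressing is posed in the paper as an open \emph{conjecture} --- the authors explicitly say that their attempts to determine $s(n,\mathcal{H})$ led only to the conjecture, and they offer no proof. So there is no paper argument to compare your proposal against; what matters is whether your sketch itself closes the question, and it does not.

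Your upper bound is correct and immediate. For the lower bound, however, what you have written is a strategy \emph{outline} with the central difficulty left unresolved --- and you say as much. Two concrete issues. First, the quantitative mismatch: your plan is to drive $\delta(H)$ up to roughly $(n-1)/2$, which requires $H$ to acquire $\Theta(n^2)$ edges; but the game terminates the moment $G$ is Hamilton-saturated, and since $sat(n,\mathcal{H})=\lceil 3n/2\rceil$ there exist saturated graphs with only linearly many edges. Nothing in the sketch shows that Mini cannot steer the game into such an endgame long before $H$ approaches the Dirac/Ore threshold. In particular, Mini may play $vu_1$ and $vu_2$ at the outset while $H$ is still a near-tree; from then on every Max move inside $H$ must avoid creating a Hamilton $u_1$--$u_2$ path in $H$, a constraint that grows increasingly restrictive precisely as Max tries to make $H$ dense, and Mini can keep appending further $vu_i$ edges as long as $H$ lacks the corresponding Hamilton paths. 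Second, the proposed remedies --- a potential function $\Phi(H)$ and ``end-game lemmas in the spirit of Section~4'' --- are named but not carried out; they are placeholders for exactly the part of the argument that is hard. The boundary cases $d_G(v)\in\{2,3\}$ interacting with a sparse $H$ are where a proof would have to live, and the sketch does not engage with them beyond acknowledging the obstacle. As it stands, you have reproduced the upper bound and restated the conjecture's difficulty, not proved the lower bound.
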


All games considered in this paper require Max and Mini to avoid
certain large structures. Another interesting line of research would
be to avoid small structures. Given a fixed graph $H$, let
${\mathcal F}_H$ denote the graph property of admitting a copy of
$H$. It follows from the celebrated Stone-Erd\H{o}s-Simonovits
Theorem (see, e.g.,~\cite{BolBook}) that $ex(n, {\mathcal F}_H) =
\Theta(n^2)$ holds for every non-bipartite graph $H$. On the other
hand, it was proved by K\'aszonyi and Tuza~\cite{KT} that $sat(n,
{\mathcal F}_H) = O(n)$ for every graph $H$. As noted in the
introduction, very little is known about $s(n, {\mathcal F}_H)$,
even in the case $H = K_3$. Several simpler cases were considered
in~\cite{CKRW}.

For most graph properties ${\mathcal P}$ considered in this paper,
we have shown that the score of the $(n, {\mathcal P})$ saturation
game is very close to the trivial upper bound $ex(n, {\mathcal P})$.
A bold exception are the $k$-matching games under some assumptions
on the parity of $k$ and the identity of the first player. It is not
hard to find examples of properties ${\mathcal P}$ for which the
trivial lower bound $s(n, {\mathcal P}) \geq sat(n, {\mathcal P})$
is in fact tight. For example, as shown in
Theorem~\ref{th::kMatching}, if Mini is the first player, then $s(n,
{\mathcal M}_2) = 3 = sat(n, {\mathcal M}_2)$. In fact, there are
infinitely many such examples. For every integer $k \geq 2$, let
$\alpha_k$ denote the property of having independence number less
than $k$. If $G \in \alpha_k$ then clearly $G$ admits an independent
set $I$ of size $k$ and $uv \in E(G)$ whenever $\{u,v\} \setminus I
\neq \emptyset$. It follows that $sat(n, \alpha_k) = ex(n, \alpha_k)
= \binom{n}{2} - \binom{k}{2}$ and thus $s(n, \alpha_k)
= \binom{n}{2} - \binom{k}{2}$ as well. It would be interesting to find less
obvious examples of the tightness of the trivial lower bound.

\textbf{Acknowledgement.} We would like to thank the anonymous
referees for their useful comments and for pointing out an error in
an earlier version of this paper.

\end{document}